\definecolor{darkgreen}{rgb}{0,0.45,0}
\DeclareMathOperator{\Id}{Id}
\newcommand{\cat}[1]{\mathbf{#1}}
\newcommand{\op}{\mathrm{op}}
\newcommand{\id}{\mathrm{id}}
\newcommand{\thg}{{\mathord{\text{--}}}}
\newcommand{\wo}{\mathrel{\boxslash}}
\newcommand{\abs}[1]{{\left|{#1}\right|}}
\newcommand{\set}[2]{\left\{\,#1 \ \vrule\  #2\,\right\}}
\newcommand{\defeq}{\mathrel{\mathop:}=}
\newcommand{\cd}[2][]{\vcenter{\hbox{\xymatrix#1{#2}}}}
\newcommand{\C}{{\mathcal C}}
\newcommand{\E}{{\mathcal E}}
\newcommand{\I}{{\mathcal I}}
\newcommand{\J}{{\mathcal J}}
\newcommand{\ELL}{{\mathcal L}}
\newcommand{\R}{{\mathcal R}}
\newcommand{\xtor}[1]{\cdl[@1]{{} \ar[r]|-{\object@{|}}^{#1} & {}}}
\def\hookleftarrowfill@{\arrowfill@\leftarrow\relbar{\relbar\joinrel\rhook}}
\def\twoheadleftarrowfill@{\arrowfill@\twoheadleftarrow\relbar\relbar}
\def\leftbararrowfill@{\arrowdoublefill@{\leftarrow\mkern-5mu}\relbar\mapstochar\relbar\relbar}
\def\Leftbararrowfill@{\arrowdoublefill@{\Leftarrow\mkern-2mu}\Relbar\Mapstochar\Relbar\Relbar}
\def\leftringarrowfill@{\arrowdoublefill@{\leftarrow\mkern-3mu}\relbar{\mkern-3mu\circ\mkern-2mu}\relbar\relbar}
\def\lefttriarrowfill@{\arrowfill@{\mathrel\triangleleft\mkern0.5mu\joinrel\relbar}\relbar\relbar}
\def\Lefttriarrowfill@{\arrowfill@{\mathrel\triangleleft\mkern1mu\joinrel\Relbar}\Relbar\Relbar}
\def\hookrightarrowfill@{\arrowfill@{\lhook\joinrel\relbar}\relbar\rightarrow}
\def\twoheadrightarrowfill@{\arrowfill@\relbar\relbar\twoheadrightarrow}
\def\rightbararrowfill@{\arrowdoublefill@{\relbar\mkern-0.5mu}\relbar\mapstochar\relbar\rightarrow}
\def\Rightbararrowfill@{\arrowdoublefill@{\Relbar\mkern-2mu}\Relbar\Mapstochar\Relbar\Rightarrow}
\def\rightringarrowfill@{\arrowdoublefill@\relbar\relbar{\mkern-2mu\circ\mkern-3mu}\relbar{\mkern-3mu\rightarrow}}
\def\righttriarrowfill@{\arrowfill@\relbar\relbar{\relbar\joinrel\mkern0.5mu\mathrel\triangleright}}
\def\Righttriarrowfill@{\arrowfill@\Relbar\Relbar{\Relbar\joinrel\mkern1mu\mathrel\triangleright}}
\def\leftrightarrowfill@{\arrowfill@\leftarrow\relbar\rightarrow}
\def\mapstofill@{\arrowfill@{\mapstochar\relbar}\relbar\rightarrow}
\renewcommand*\xleftarrow[2][]{\ext@arrow 20{20}0\leftarrowfill@{#1}{#2}}
\providecommand*\xLeftarrow[2][]{\ext@arrow 60{22}0{\Leftarrowfill@}{#1}{#2}}
\providecommand*\xhookleftarrow[2][]{\ext@arrow 10{20}0\hookleftarrowfill@{#1}{#2}}
\providecommand*\xtwoheadleftarrow[2][]{\ext@arrow 60{20}0\twoheadleftarrowfill@{#1}{#2}}
\providecommand*\xleftbararrow[2][]{\ext@arrow 10{22}0\leftbararrowfill@{#1}{#2}}
\providecommand*\xLeftbararrow[2][]{\ext@arrow 50{24}0\Leftbararrowfill@{#1}{#2}}
\providecommand*\xleftringarrow[2][]{\ext@arrow 10{26}0\leftringarrowfill@{#1}{#2}}
\providecommand*\xlefttriarrow[2][]{\ext@arrow 80{24}0\lefttriarrowfill@{#1}{#2}}
\providecommand*\xLefttriarrow[2][]{\ext@arrow 80{24}0\Lefttriarrowfill@{#1}{#2}}
\renewcommand*\xrightarrow[2][]{\ext@arrow 01{20}0\rightarrowfill@{#1}{#2}}
\providecommand*\xRightarrow[2][]{\ext@arrow 04{22}0{\Rightarrowfill@}{#1}{#2}}
\providecommand*\xhookrightarrow[2][]{\ext@arrow 00{20}0\hookrightarrowfill@{#1}{#2}}
\providecommand*\xtwoheadrightarrow[2][]{\ext@arrow 03{20}0\twoheadrightarrowfill@{#1}{#2}}
\providecommand*\xrightbararrow[2][]{\ext@arrow 01{22}0\rightbararrowfill@{#1}{#2}}
\providecommand*\xRightbararrow[2][]{\ext@arrow 04{24}0\Rightbararrowfill@{#1}{#2}}
\providecommand*\xrightringarrow[2][]{\ext@arrow 01{26}0\rightringarrowfill@{#1}{#2}}
\providecommand*\xrighttriarrow[2][]{\ext@arrow 07{24}0\righttriarrowfill@{#1}{#2}}
\providecommand*\xRighttriarrow[2][]{\ext@arrow 07{24}0\Righttriarrowfill@{#1}{#2}}
\providecommand*\xmapsto[2][]{\ext@arrow 01{20}0\mapstofill@{#1}{#2}}
\providecommand*\xleftrightarrow[2][]{\ext@arrow 10{22}0\leftrightarrowfill@{#1}{#2}}
\providecommand*\xLeftrightarrow[2][]{\ext@arrow 10{27}0{\Leftrightarrowfill@}{#1}{#2}}
\newcommand{\twocong}[2][0.5]{\ar@{}[#2] \save ?(#1)*{\cong}\restore}
\newcommand{\twoeq}[2][0.5]{\ar@{}[#2] \save ?(#1)*{=}\restore}
\newcommand{\rtwocell}[3][0.5]{\ar@{}[#2] \ar@{=>}?(#1)+/l 0.2cm/;?(#1)+/r 0.2cm/^{#3}}
\newcommand{\ltwocell}[3][0.5]{\ar@{}[#2] \ar@{=>}?(#1)+/r 0.2cm/;?(#1)+/l 0.2cm/^{#3}}
\newcommand{\ltwocello}[3][0.5]{\ar@{}[#2] \ar@{=>}?(#1)+/r 0.2cm/;?(#1)+/l 0.2cm/_{#3}}
\newcommand{\dtwocell}[3][0.5]{\ar@{}[#2] \ar@{=>}?(#1)+/u  0.2cm/;?(#1)+/d 0.2cm/^{#3}}
\newcommand{\dltwocell}[3][0.5]{\ar@{}[#2] \ar@{=>}?(#1)+/ur  0.2cm/;?(#1)+/dl 0.2cm/^{#3}}
\newcommand{\drtwocell}[3][0.5]{\ar@{}[#2] \ar@{=>}?(#1)+/ul  0.2cm/;?(#1)+/dr 0.2cm/^{#3}}
\newcommand{\dthreecell}[3][0.5]{\ar@{}[#2] \ar@3{->}?(#1)+/u  0.2cm/;?(#1)+/d 0.2cm/^{#3}}
\newcommand{\utwocell}[3][0.5]{\ar@{}[#2] \ar@{=>}?(#1)+/d 0.2cm/;?(#1)+/u 0.2cm/_{#3}}
\newcommand{\dtwocelltarg}[3][0.5]{\ar@{}#2 \ar@{=>}?(#1)+/u  0.2cm/;?(#1)+/d 0.2cm/^{#3}}
\newcommand{\utwocelltarg}[3][0.5]{\ar@{}#2 \ar@{=>}?(#1)+/d  0.2cm/;?(#1)+/u 0.2cm/_{#3}}
\newtheorem{Thm}{Theorem}[subsection]
\newtheorem{Prop}[Thm]{Proposition}
\newtheorem{Cor}[Thm]{Corollary}
\newtheorem{Lemma}[Thm]{Lemma}
\theoremstyle{definition}
\newtheorem{Defn}[Thm]{Definition}
\newtheorem{Not}[Thm]{Notation}
\newtheorem{Ex}[Thm]{Example}
\newtheorem{Rk}[Thm]{Remark}
\newcommand{\ty}{\mathsf{type}}
\renewcommand{\c}{,\,\,}
\renewcommand{\r}{\mathrm{r}}
\renewcommand{\J}{\mathrm{J}}
\begin{document}
\title[Topological and simplicial models of identity types]{Topological and simplicial\\models of identity types}
\author{Benno van den Berg}
\address{Technische Universit\"at Darmstadt, Fachbereich Mathematik, Schlo\ss gartenstra\ss e 7, 64289 Darmstadt, Germany}
\email{berg@mathematik.tu-darmstadt.de}
\author{Richard Garner}
\address{Department of Computing, Macquarie University, Sydney NSW 2109, Australia}
\email{richard.garner@mq.edu.au} \begin{abstract}
In this paper we construct new categorical models for the identity types of
Martin-L\"of type theory, in the categories $\cat{Top}$ of topological spaces
and $\cat{SSet}$ of simplicial sets. We do so building on earlier work of
Awodey and Warren, which has suggested that a suitable environment for the
interpretation of identity types should be a category equipped with a weak
factorisation system in the sense of Bousfield--Quillen. It turns out that this
is not quite enough for a sound model, due to some subtle coherence issues
concerned with stability under substitution; and so our first task is to
introduce a slightly richer structure---which we call a
\emph{homotopy-theoretic model of identity types}---and to prove that this is
sufficient for a sound interpretation.

Now, although both $\cat{Top}$ and $\cat{SSet}$ are categories endowed with a
weak factorisation system---and indeed, an entire Quillen model
structure---exhibiting the additional structure required for a
homotopy-theoretic model is quite hard to do. However, the categories we are
interested in share a number of common features, and abstracting these leads us
to introduce the notion of a \emph{path object category}. This is a relatively
simple axiomatic framework, which is nonetheless sufficiently strong to allow
the construction of homotopy-theoretic models. Now by exhibiting suitable path
object structures on $\cat{Top}$ and $\cat{SSet}$, we endow those categories
with the structure of a homotopy-theoretic model: and, in this way, obtain the
desired topological and simplicial models of identity types.
\end{abstract}
\thanks{The second author acknowledges the support of the Centre for Australian
Category Theory.}

\maketitle

\newcommand{\Ty}{\mathrm{Ty}}
\section{Introduction}
\looseness=-1 Recently, there have been a number of interesting developments in
the categorical semantics of intensional Martin-L\"of type theory, with work
such
as~\cite{Awodey2008Homotopy,Gambino2008identity,Garner2008Two-dimensional,Garner2008Types,Lumsdaine2009Weak,Warren2008Homotopy}
establishing links between type theory, abstract homotopy theory and
higher-dimensional category theory. All of this work can be seen as an
elaboration of the following basic idea: that in Martin-L\"of type theory, a
type $A$ is analogous to a topological space; elements $a, b \in A$ to points
of that space; and elements of an identity type $p, q \in \Id_A(a,b)$ to
\emph{paths} or \emph{homotopies} $p, q \colon a \to b$ in $A$. This article is
a further development of this theme; its goal is to construct models of the
identity types in categories whose objects have a suitably topological nature
to them---in particular, in the categories of topological spaces and of
simplicial sets.

One popular approach to articulating the topological nature of a category is to
equip it with a model structure in the sense of~\cite{Quillen1967Homotopical}.
It is shown in~\cite{Awodey2008Homotopy} that such a model structure is a
suitable environment for the interpretation of the identity types of
Martin-L\"of type theory; the main point being that we may fruitfully interpret
dependent types $(x \in \Gamma)\, A(x)$ by \emph{fibrations} $A \to \Gamma$ in
the model structure. In fact, a model structure is somewhat more than one
needs: it is comprised of two \emph{weak factorisation
systems}~\cite{Bousfield1977Constructions} interacting in a well-behaved
manner, but as Awodey and Warren point out, in modelling type theory only one
of these weak factorisation systems plays a role.

Now, it is certainly the case that the categories of topological spaces and of
simplicial sets carry well-understood Quillen model structures; and so one
might expect that we could construct models of identity types in them by a
direct appeal to Awodey and Warren's results. In fact, this is not the case due
to a crucial detail---which we have so far elided---concerning the
\emph{stability under substitution} of the identity type structure. In the
categorical interpretation described by Awodey and Warren, substitution is
modelled by pullback, whilst the identity type structure is obtained by
choosing certain pieces of data whose existence is assured by the given weak
factorisation system (henceforth w.f.s.). Thus to ask for the identity type
structure to be stable under substitution is to ask for these choices of data
to be suitably stable under pullback, something which in general is rather hard
to arrange.

A finer analysis of this stability problem is given
in~\cite{Warren2008Homotopy}, which reveals two distinct aspects to it. The
first concerns the stability under substitution of the identity type itself and
of its introduction rule. For many examples derived from w.f.s.'s, including
those studied in~\cite{Warren2008Homotopy} and those studied here, it is
possible---though by no means trivial---to obtain this stability by choosing
one's data with sufficient care. However, the second aspect to the stability
problem is more troublesome. It concerns the stability of the identity type's
elimination and computation rules: and here we know rather few examples of
w.f.s.'s for which the requisite data may be chosen in a suitably coherent
manner.

The first main contribution of this paper is to describe a general solution to
the stability problem for homotopy-theoretic semantics. The key idea is to work
with a mild ``algebraisation'' of the notion of w.f.s.---which we term a
\emph{cloven w.f.s.}---in which certain of the data whose existence is merely
assured by the definition of w.f.s.\ are now provided as part of the structure.
In this setting, we may model dependent types not by fibrations, but rather by
\emph{cloven} fibrations: the difference being that whereas being a fibration
is a \emph{property} of a map, being a cloven fibration is \emph{structure} on
it. This extra structure will turn out to be just what we need to determine
canonical, pullback-stable choices of interpretation for the identity type
elimination rule. We crystallise this idea by introducing a notion of
\emph{homotopy-theoretic model of type theory}---this being a category equipped
with a cloven w.f.s.\ and suitable extra data related to that w.f.s.---and
proving our first main result, that every homotopy-theoretic model admits a
sound interpretation of the identity types of Martin-L\"of type theory.

Now, any reasonable w.f.s.\ on a category may be equipped with the structure of
a cloven w.f.s.: but it is by no means always the case that this cloven w.f.s.\
can be made part of a homotopy-theoretic model of identity types. This is
because the ``extra data'' required to do so---namely, a pullback-stable choice
of factorisations for diagonal morphisms $X \to X \times_Y X$---is not
something we  expect to exist in general. The second main contribution of this
paper is to describe a simple and widely applicable axiomatic framework---that
of a \emph{path object category}---within which one may construct cloven
w.f.s.'s which do carry this extra data. The fundamental axiom of our framework
is one asserting the existence for every object $X$ of the given category of a
``path object'' $MX$ providing an internal category structure $MX
\rightrightarrows X$ on $X$. From this we obtain a cloven w.f.s.\ whose
fibrations are the maps with the path-lifting property with respect to this
notion of path. The reason that this cloven w.f.s.\ may be equipped with the
extra data required for a homotopy-theoretic model is that the notion of path
object category is ``stable under slicing'': which is to say that any slice of
a path object category is a path object category, and that any pullback functor
between slices preserves the structure. We may thereby construct
pullback-stable factorisations of diagonals by factorising $X \to X \times_Y X$
using the path object structure on the slice over $Y$. We encapsulate this in
the second main result of the paper, which shows that every path object
category gives rise to a homotopy-theoretic model of type theory.

The third main contribution of our paper is to exhibit a number of instances of
the notion of path object category, hence obtaining a range of different models
of identity types. Some of the models we obtain are already known, such as the
groupoid model of~\cite{Hofmann1998groupoid}, and the chain complex model
of~\cite{Warren2008Homotopy}. More generally, our framework allows us to
capture a class of models described in~\cite{Warren2008Homotopy} whose
structure is determined by an \emph{interval object} in a category. However,
beyond these classes of known models, we obtain two important new ones: the
first in the category of topological spaces, and the second in the category of
simplicial sets. Let us also note that Jaap van Oosten has communicated the
existence of a further instance of our axiomatic framework in the
\emph{effective topos} of~\cite{Hyland1982effective}; this extends his work
in~\cite{Oosten2010Notion}.

It is as well to point out also what we do \emph{not} achieve in this article.
The categorical models that one builds from the syntax of Martin-L\"of type
theory turn out to be neither homotopy-theoretic models nor path object
categories; and so there can be no hope of a completeness result for
intensional type theory with respect to semantics valued in either kind of
model. The reason for this is a certain strictness present in these structures,
necessary for the arguments we make, but not present in the syntax. This same
strictness also has ramifications for the simplicial and topological models we
construct. In both cases, the obvious choices of path object---given in the
topological case by the assignation $X \mapsto X^{[0,1]}$ and in the simplicial
case by $X \mapsto X^{\Delta^1}$, where $\Delta^1$ denotes the simplicial
interval---are insufficiently strict to yield a path object structure,
necessitating a subtler model construction using the notion of (topological or
simplicial) \emph{Moore path}. It remains an open problem as to whether there
is a more refined notion of homotopy-theoretic model which admits both the
syntax and the ``naive'' simplicial and topological interpretations as
examples.

The plan of the paper is as follows. We begin in Section~\ref{idmodels} by
recalling the syntax and semantics of the type theory we will be concerned
with. Then in Section~\ref{htmodels}, we introduce the notion of a
homotopy-theoretic model of identity types, and prove that every such model
admits a sound interpretation of our type theory. Next, in
Section~\ref{catframework}, we introduce the axiomatic structure of a path
object category; in Section~\ref{exs}, we give a number of examples of path
object categories, including the category of topological spaces, and the
category of simplicial sets; and in Section~\ref{typecatstruct}, we show that
every path object category may be made into a homotopy-theoretic model of
identity types, and so admits a sound interpretation of our type theory.
Finally, Section~\ref{simplicialexample} fills in the combinatorial details of
the construction of the path object category of simplicial sets.


\section{Syntax and semantics of dependent type theory}\label{idmodels}
In this Section, we gather together the required type-theoretic background:
firstly describing the syntax of the type theory with which we shall work, and
then the corresponding notion of categorical model.
\subsection{Intensional type theory}
By \emph{intensional Martin-L\"of type theory}, we mean the logical calculus
set out in Part~I of~\cite{Nordstrom1990Programming}. Our concern in the
present paper will be with the fragment of this theory containing only the
basic structural rules together with the rules for the \emph{identity types}.
We now summarise this calculus. It has four basic forms of judgement: $A \ \ty$
(``$A$ is a type''); $a : A$ (``$a$ is an element of the type $A$''); $A = B \
\ty$ (``$A$ and $B$ are definitionally equal types''); and $a = b : A$ (``$a$
and $b$ are definitionally equal elements of the type $A$''). These judgements
may be made either absolutely, or relative to a context $\Gamma$ of
assumptions, in which case we write them as
\begin{equation*}
  \Gamma \ \vdash \ A\ \ty\text, \qquad
  \Gamma \ \vdash \ a : A\text, \qquad
  \Gamma \ \vdash \ A = B\ \ty \qquad \text{and} \qquad
  \Gamma \ \vdash \ a = b: A
\end{equation*}
respectively. Here, a \emph{context} is a list $\Gamma = x_1 : A_1\c x_2 :
A_2\c \dots\c x_n : A_{n-1}$, wherein each $A_i$ is a type relative to the
context $x_1 : A_1\c \dots\c x_{i-1} : A_{i-1}$. There are now some rather
natural requirements for well-formed judgements: in order to assert that $a :
A$ we must first know that $A \ \ty$; to assert that $A = B \ \ty$ we must
first know that $A \ \ty$ and $B \ \ty$; and so on. We specify intensional
Martin-L\"of type theory as a collection of inference rules over these forms of
judgement. Firstly we have the \emph{equality rules}, which assert that the two
judgement forms $A = B \ \ty$ and $a = b : A$ are congruences with respect to
all the other operations of the theory; then we have the \emph{structural
rules}, which deal with weakening, contraction, exchange and substitution; and
finally, the \emph{logical rules}, which specify the type-formers of our
theory, together with their introduction, elimination and computation rules.
The only logical rules we consider in this paper are those for the identity
types, which we list in Figure~\ref{fig1}. We commit the usual abuse of
notation in leaving implicit an ambient context $\Gamma$ common to the
premisses and conclusions of each rule, and omitting the rules expressing
stability under substitution in this ambient context. Let us remark also that
in the rules $\Id\textsc{-elim}$ and $\Id\textsc{-comp}$ we allow the type $C$
over which elimination is occurring to depend upon an additional contextual
parameter $\Delta$. We refer to these forms of the rules as the \emph{strong}
computation and elimination rules. Were we to add $\Pi$-types (dependent
products) to our calculus, then these rules would be equivalent to the usual
ones, without the extra parameter $\Delta$; however, in the absence of
$\Pi$-types, this extra parameter is essential to derive all but the most basic
properties of the identity type.

\begin{figure}
\begin{equation*}
    \inferrule*[right=$\Id$-form;]{A\ \ty \\ a, b : A}{\Id_A(a, b) \ \ty} \qquad
    \inferrule*[right=$\Id$-intro;]{A\ \ty \\ a : A}{\r(a) : \Id_A(a, a)}
\end{equation*}
\ \begin{equation*}
\inferrule*[right=$\Id$-elim;]{\big(x, y : A \c p : \Id_A(x, y) \c \Delta(x,y,p)\big)\  C(x, y, p) \ \ty\\
  x : A \c \Delta(x, x, \r(x))\ \vdash \  d(x) : C(x, x, \r(x))\\
  a, b : A \\ p : \Id_A(a, b)}
  {\Delta(a,b,p)\ \vdash \ \J_{d}(a, b, p) : C(a, b, p)}
\end{equation*}
\
\begin{equation*}
\inferrule*[right=$\Id$-comp.]{x, y : A \c p : \Id_A(x, y) \c \Delta(x,y,p)\ \vdash \   C(x, y, p) \ \ty\\
  x : A \c \Delta(x, x, \r(x))\ \vdash \ d(x) : C(x, x, \r(x))\\
  a : A}
  {\Delta(a,a,\r(a))\ \vdash \ \J_{d}(a,a,\r(a)) = d(a) : C(a, a, \r(a))}
\end{equation*}
\caption{Identity type rules}\label{fig1}
\end{figure}

\subsection{Models of type theory}
We now give a suitable notion of \emph{categorical model} for the dependent
type theory we have just described. There are a number of essentially
equivalent notions of categorical model we could use (see, for example,
\cite{Cartmell1986Generalised,Jacobs1993Comprehension,Pitts2000Categorical});
of these, we have chosen Pitts' \emph{type categories} since they minimise the
amount of data required to construct a model, but still admit a precise
soundness and completeness result. We first recall
from~\cite{Pitts2000Categorical} the basic definition.
\begin{Defn}
A \emph{type category} is given by:
\begin{itemize}
\item A category $\C$ of \emph{contexts}.
\item For each $\Gamma \in \C$, a collection $\Ty(\Gamma)$ of \emph{types}
    in context $\Gamma$.
\item For each $A \in \Ty(\Gamma)$, an \emph{extended context} $\Gamma.A
    \in \C$ and a \emph{dependent projection} $\pi_A \colon \Gamma.A \to
    \Gamma$.
\item For each $f \colon \Delta \to \Gamma$ in $\C$ and $A \in
    \Ty(\Gamma)$, a type $A[f] \in \Ty(\Delta)$ and a morphism $f^+ \colon
    \Delta. A[f] \to \Gamma.A$ making the following square into a pullback:
\begin{equation}\label{necpb}
    \cd{
      \Delta. A[f] \ar[r]^-{f^+} \ar[d]_{\pi_{A[f]}} & \Gamma.A \ar[d]^{\pi_A}
\\
      \Delta  \ar[r]_{f} & \Gamma\rlap{ .}
    }
\end{equation}
\end{itemize}
A type category is said to be \emph{split} if it satisfies the coherence axioms
\begin{equation}\label{cohlaws}
    A[\id_\Gamma] = A\text, \qquad     A[f g] = A[f][g]\text, \qquad
    (\id_\Gamma)^+ = \id_{A.\Gamma}\text, \qquad (f g)^+ = f^+g^+\text.
\end{equation}
\end{Defn}
\begin{Rk}\label{coherenceremark1}
In~\cite{Pitts2000Categorical}, the coherence axioms of~\eqref{cohlaws} are
taken as part of the definition of a type category. We do not do so here due to
the nature of the type categories we wish to construct: in them, the types over
$\Gamma$ will be (structured) maps $X \to \Gamma$ and the operation of type
substitution will be given by pullback, an operation which is rarely functorial
on the nose. However, as is pointed out in~\cite{Hofmann1995interpretation},
without the coherence laws of~\eqref{cohlaws}, we cannot obtain a sound
interpretation of the structural axioms of a dependent type theory. The main
result of that paper allows us to overcome this: when translated into our
language, it says that any type category may be replaced by a split type
category which is equivalent to it in a suitable $2$-category of type
categories.
\end{Rk}
 We now describe the additional structure required on a type category for it to model identity types. First we introduce some notation. Given $A, B \in
\Ty(\Gamma)$, we write $B^+$ as an abbreviation for $B[\pi_A] \in
\Ty(\Gamma.A)$. Thus we have a pullback square
\begin{equation*}
    \cd{
      \Gamma.A.B^+ \ar[r]^-{(\pi_A)^+} \ar[d]_{\pi_{B^+}} & \Gamma.B
\ar[d]^{\pi_B} \\
      \Gamma.A \ar[r]_{\pi_A} & \Gamma
    }\ \text.
\end{equation*}
In particular, when $A = B$, the universal property of this pullback induces a
diagonal morphism $\delta_A \colon \Gamma.A \to \Gamma.A.A^+$ satisfying
$\pi_{A^+} . \delta_A = (\pi_A)^+ . \delta_A = \id_{\Gamma.A}$.
\begin{Defn}\label{idtypesdefn}
A type category has \emph{identity types} if there are given:
\begin{itemize}
\item For each $A \in \Ty(\Gamma)$, a type $\Id_A \in \Ty(\Gamma.A.A^+)$;
\item For each $A \in \Ty(\Gamma)$, a morphism
    $r_A \colon \Gamma.A \to \Gamma.A.A^+.\Id_A$
with $\pi_{\Id_A}.r_a = \delta_A$;
\item For each $C \in \Ty(\Gamma.A.A^+.\Id_A)$ and commutative diagram
\begin{equation}\label{idelim}
    \cd{
      \Gamma.A \ar[r]^-{d} \ar[d]_{r_A} & \Gamma.A.A^+.\Id_A.C \ar[d]^{\pi_C} \\
      \Gamma.A.A^+.\Id_A \ar[r]_{\id} & \Gamma.A.A^+.\Id_A
    }
\end{equation}
a diagonal filler $J(C, d)$ making both triangles commute.
\end{itemize}
We further require that this structure should be stable under substitution.
Thus, for every morphism $f \colon \Delta \to \Gamma$ in $\C$, we require that
$\Id_A[f^{++}] = \Id_{A[f]}$, and that the following two squares should
commute:
\begin{equation}\label{subststabdiags}
   \cd{
    \Delta.A[f] \ar[r]^-{r_{A[f]}} \ar[d]_{f^{+}} &
    \Delta.A[f].A[f]^+.\Id_{A[f]} \ar[d]^{f^{+++}} \\
    \Gamma.A \ar[r]_-{r_A} &
    \Gamma.A.A^+.\Id_{A}
  }
\end{equation}
\begin{equation}
  \cd[@C+3em]{
    \llap{$\Delta.A[f].$}A[f]^+.\Id_{A[f]} \ar[d]_{f^{+++}}
\ar[r]^-{J(C[f],d[f])} &
    \Delta.A[f].A[f]^+.\rlap{$\Id_{A[f]}.C[f^{+++}]$} \ar[d]^{f^{++++}} \\
    \Gamma.A.A^+.\Id_{A} \ar[r]_-{J(C,d)} &
    \Gamma.A.A^+.\Id_{A}\rlap{$.C$ .}
  }\label{subststabdiags2}
\end{equation}
\end{Defn}
As discussed above, the most appropriate formulation of the identity type rules
in the absence of $\Pi$-types incorporates an extra contextual parameter in the
elimination and computation rules. However, the structure we have just
described captures only the weaker forms in which this contextual parameter is
absent. Let us therefore formulate the strong computation and elimination rules
in our categorical setting.
\begin{Defn}\label{strongids}
A type category has \emph{strong identity types} if for every $A \in
\Ty(\Gamma)$ there are given $\Id_A$ and $r_A$ as above, but now for every
\begin{align*}
B_1 & \in \Ty(\Gamma.A.A^+.\Id_A)\\
& \vdots\\
B_n & \in \Ty(\Gamma.A.A^+.\Id_A.B_1\dots B_{n-1})\\
C & \in \Ty(\Gamma.A.A^+.\Id_A.B_1\dots B_{n-1}.B_n)
\end{align*}
and commutative diagram
\begin{equation}\label{strongiddiag}
    \cd{
      \Gamma.A.\Delta[r_A] \ar[r]^-{d} \ar[d]_{(r_A)^{+\dots+}} &
\Gamma.A.A^+.\Id_A.\Delta.C \ar[d]^{\pi_C} \\
      \Gamma.A.A^+.\Id_A.\Delta \ar[r]_{\id} & \Gamma.A.A^+.\Id_A.\Lambda
    }
\end{equation}
(where we write $\Delta$ as an abbreviation for $B_1 \dots B_n$ in the obvious
way), we are given a diagonal filler $J(\Lambda,C,d)$ making both triangles
commute. We require all this structure to be stable under substitution as in
Definition~\ref{idtypesdefn}.

By a \emph{categorical model of identity types}, we mean a type category with
strong identity types.
\end{Defn}

\begin{Rk}
As discussed in Remark~\ref{coherenceremark1} above, our use of non-split type
categories is justified by the coherence result
of~\cite{Hofmann1995interpretation}, which allows us to replace any non-split
type category by an equivalent split one. For this justification to remain
meaningful in the presence of identity types, it must be the case that a
(strong) identity type structure on a type category induces a corresponding
structure on its strictification. This is indeed the case, as proven
in~\cite[Theorem 2.48]{Warren2008Homotopy}. (Actually, Warren does not consider
the strong identity type rules; but his argument may be modified without
difficulty to cover this case.)
\end{Rk}

\section{Homotopy-theoretic models of identity types}\label{htmodels}\looseness=-1
In this section, we define a notion of \emph{homotopy-theoretic model of
identity types}---building on the work of~\cite{Awodey2008Homotopy}---and prove
our first main result, Theorem~\ref{mainthm1}, which shows that any
homotopy-theoretic model gives rise to a categorical one. The notion of model
described here can be seen as a precise formulation of one that is implicit in
Chapter~3 of~\cite{Warren2008Homotopy}.

\subsection{Interpretation of identity types in a weak factorisation system}
The notion of homotopy-theoretic model which we are going to define is based on
the central idea of~\cite{Awodey2008Homotopy}: that a suitable environment for
the interpretation of identity types is that of a category equipped with a
\emph{weak factorisation system} in the sense
of~\cite{Bousfield1977Constructions}. We begin by recalling the basic
definitions.
\begin{Defn}
A \emph{weak factorisation system} or \emph{w.f.s.}\ $(\ELL, \R)$ on a
category~$\E$ is given by two classes $\ELL$ and~$\R$ of morphisms in~$\E$
which are each closed under retracts when viewed as full subcategories of the
arrow category $\E^\mathbf 2$, and which satisfy the following two axioms:
\begin{enumerate}
\item[(i)] \emph{Factorisation}: each $f \in \E$ may be written as $f = pi$
    where $i \in \ELL$ and $p \in \R$.
\item[(ii)] \emph{Weak orthogonality}: for each $f \in \ELL$ and $g \in
    \R$, we have $f \wo g$,
\end{enumerate}
where to say that $f \wo g$ holds is to say that for each commutative square
\begin{equation}\label{cs}
    \cd{
      U \ar[r]^{h} \ar[d]_{f} &
      X \ar[d]^{g} \\
      V \ar[r]_{k} &
      Y
    }
\end{equation}
we may find a filler $j \colon V \to X$ satisfying $jf = h$ and $gj = k$.
\end{Defn}
Given a category $\E$ equipped with a w.f.s., \cite{Awodey2008Homotopy}
suggests the following method of interpreting identity types in it. One begins
by taking the dependent types over some $\Gamma \in \E$ to be the collection of
$\R$-maps $X \to \Gamma$, with type substitution being given by pullback. Now
given a map $x \colon X \to \Gamma \in \E$ interpreting some dependent type
over $\Gamma$, we may factorise the diagonal $X \to X \times_\Gamma X$~as
\begin{equation}\label{diagfac}
  X \xrightarrow{i_x} I(x) \xrightarrow{j_x} X \times_\Gamma X\ \text,
\end{equation}
where $i_x$ is an $\ELL$-map, and $j_x$ an $\R$-map. Since $j_x$ is an
$\R$-map, it gives rise to a dependent type over $X \times_\Gamma X$, which
will be the interpretation of the identity type on $X$. The introduction rule
for $\Id_X$ will be interpreted by the map $i_x$; whilst to interpret the
elimination and computation rules, we observe that given a diagram
like~\eqref{idelim} in $\E$, the left-hand arrow is in $\ELL$---since it is the
map $i_x$ above---and the right-hand arrow is in $\R$---by definition of
dependent type in the model---so that by weak orthogonality, we have a filler
$J(C,d)$ as required.

\subsection{Cloven weak factorisation systems}
As discussed in the Introduction, when we try to make the above argument
precise we run into a problem of \emph{coherent choice}. The definition of weak
factorisation system demands the existence of certain pieces of
data---factorisations and diagonal fillers---without asking for explicit
choices of these data to be made. In order to model type theory, therefore, we
must first choose factorisations as in~\eqref{diagfac}, and fillers for squares
such as~\eqref{idelim}. However, we cannot make such choices arbitrarily, since
the categorical structure defining the identity types is required to be stable
under substitution, which amounts to requiring that the choices of
factorisations and fillers we make be stable under pullback. As noted in the
Introduction, the two aspects of this requirement---stability of
factorisations, and stability of fillers---are quite different in nature. For
whilst many naturally-occurring weak factorisation systems may be equipped with
stable factorisations~\eqref{diagfac}---as is worked out comprehensively
in~\cite{Warren2008Homotopy}---rather few have been similarly provided with
stable fillers~\eqref{idelim}. A closer analysis of the examples where this has
been possible shows that underlying each of them is a structure richer than
that of a mere w.f.s.\ The following definition is intended to capture the
essence of that extra structure.

\begin{Defn}
A \emph{cloven w.f.s.} on a category $\E$ is given by the following data:
\begin{itemize}
\item For each $f \colon X \to Y$ in $\E$, a choice of factorisation
\begin{equation}\label{factorisation}
    f = X \xrightarrow{\lambda_f} Pf \xrightarrow{\rho_f} Y\ \text;
\end{equation}
\item For each commutative square of the form~\eqref{cs}, a choice of
    diagonal fillers
\begin{equation}\label{diagfiller}
    \cd{
        U \ar[r]^{\lambda_g . h} \ar[d]_{\lambda_f} & Pg \ar[d]^{\rho_g} \\
        Pf \ar[r]_{k . \rho_f} \ar@{.>}[ur]|{P(h, k)} & Y }
\end{equation}
making the assignation $(h,k) \mapsto P(h,k)$ functorial in $(h,k)$\
;\vskip0.5\baselineskip
\item For each $f \colon X \to Y$, choices of fillers $\sigma_f$ and
    $\pi_f$ as indicated:
\begin{equation}\label{sigpi}
    \cd{
      X \ar[r]^{\lambda_{\lambda_f}} \ar[d]_{\lambda_f} &
      P\lambda_f \ar[d]^{\rho_{\lambda_f}} \\
      Pf \ar[r]_{1_{Pf}} \ar@{.>}[ur]_{\sigma_f} &
      Pf
    } \qquad \text{and} \qquad
    \cd{
      Pf \ar[r]^{1_{Pf}} \ar[d]_{\lambda_{\rho_f}} &
      Pf \ar[d]^{\rho_f} \\
      P\rho_f \ar[r]_{\rho_{\rho_f}} \ar@{.>}[ur]_{\pi_f} &
      Y\rlap{ .}
    }
\end{equation}
\end{itemize}
\end{Defn}
To justify the nomenclature, we must show that any cloven w.f.s.\ has an
underlying w.f.s. To do this, we introduce the notion of cloven $\ELL$- and
$\R$-maps in a cloven w.f.s. By a \emph{cloven $\ELL$-map structure} on a
morphism $f \colon X \to Y$ of $\E$, we mean a map $s \colon Y \to Pf$
rendering commutative the diagram
\begin{equation*}
    \cd{
      X \ar[r]^{\lambda_f} \ar[d]_{f} &
      Pf \ar[d]^{\rho_f} \\
      Y \ar[r]_{1_Y} \ar@{.>}[ur]_{s} &
      Y\rlap{ .}
    }
\end{equation*}
We will sometimes express this by saying that $(f,s) \colon X \to Y$ is a
cloven $\ELL$-map. Dually, a \emph{cloven $\R$-map} structure on $f$ is given
by a morphism $p \colon Pf \to X$ rendering commutative the diagram
\begin{equation*}
    \cd{
      X \ar[r]^{1_X} \ar[d]_{\lambda_f} &
      X \ar[d]^{f} \\
      Pf \ar[r]_{\rho_f} \ar@{.>}[ur]_{p} &
      Y\rlap{ .}
    }
\end{equation*}
Again, we may express this by calling $(f, p) \colon X \to Y$ a cloven
$\R$-map.
\begin{Prop}\label{howtogetfillers}
Given a cloven $\ELL$-map $(f, s) \colon U \to V$, a cloven $\R$-map $(g, p)
\colon X \to Y$ and a commutative square of the form~\eqref{cs}, there is a
canonical choice of diagonal filler $j \colon V \to X$ making both induced
triangles in~\eqref{cs} commute.
\end{Prop}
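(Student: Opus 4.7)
The plan is to exhibit $j$ explicitly in terms of the given data and then verify the two triangles commute by direct diagram chases; there is no serious obstacle, the work lies simply in spotting the right formula.

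First I would observe that the commutative square \eqref{cs}, regarded as a morphism $(h,k) \colon f \to g$ in $\E^{\mathbf 2}$, induces via the functorial choice of fillers in the cloven w.f.s.\ a morphism $P(h,k) \colon Pf \to Pg$ satisfying $P(h,k)\cdot \lambda_f = \lambda_g \cdot h$ and $\rho_g \cdot P(h,k) = k \cdot \rho_f$. Given the cloven $\ELL$-structure $s \colon V \to Pf$ on $f$ and the cloven $\R$-structure $p \colon Pg \to X$ on $g$, I propose to define the diagonal filler as the composite
\begin{equation*}
  j \;\defeq\; V \xrightarrow{\ s\ } Pf \xrightarrow{P(h,k)} Pg \xrightarrow{\ p\ } X\text.
\end{equation*}

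To verify $jf = h$, I compute $jf = p \cdot P(h,k) \cdot s \cdot f = p \cdot P(h,k) \cdot \lambda_f$, using the defining equation $s \cdot f = \lambda_f$ of the $\ELL$-structure on $f$; then the upper triangle of \eqref{diagfiller} gives $p \cdot P(h,k) \cdot \lambda_f = p \cdot \lambda_g \cdot h$, and finally the defining equation $p \cdot \lambda_g = 1_X$ of the $\R$-structure on $g$ yields $jf = h$. Dually, $gj = g \cdot p \cdot P(h,k) \cdot s = \rho_g \cdot P(h,k) \cdot s$ by the equation $g \cdot p = \rho_g$ of the $\R$-structure; the lower triangle of \eqref{diagfiller} rewrites this as $k \cdot \rho_f \cdot s$, and since $\rho_f \cdot s = 1_V$ by the $\ELL$-structure on $f$, we obtain $gj = k$.

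The construction is manifestly canonical, being a composite of components each provided as part of the given structures (the functorial filler $P(h,k)$, the section $s$ and the retraction $p$); no arbitrary choices are introduced. The only mildly non-obvious point is that one must read off the six small commuting triangles \emph{in the right order} to chain them together—but once the formula $j = p \cdot P(h,k) \cdot s$ is written down, both identities follow by three rewrites apiece, as above.
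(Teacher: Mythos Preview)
Your proposal is correct and takes exactly the same approach as the paper: the paper defines the filler as the composite $j = p \cdot P(h,k) \cdot s$ and leaves the verification of the two triangles to the reader, while you spell out those verifications explicitly and correctly.
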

\begin{proof}
We take $j$ to be the composite 
$    V \xrightarrow{s} Pf \xrightarrow{P(h, k)} Pg \xrightarrow{p} X$.
\end{proof}
The following result is now essentially Section 2.4 of~\cite{Rosick'y2002Lax}:
\begin{Cor}\label{underlying}
Every cloven w.f.s.\ has an underlying w.f.s.\ whose two classes of maps are
given by
\begin{align*}
    \ELL & \defeq \set{f \colon A \to B}{\text{there is a cloven $\ELL$-map
structure on $f$}} \\
    \R & \defeq \set{g \colon C \to D}{\text{there is a cloven $\R$-map
structure on $g$}}\ \text.
\end{align*}
\end{Cor}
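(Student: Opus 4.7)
The plan is to verify the three w.f.s.\ axioms for the pair $(\ELL, \R)$ defined in the statement: functorial factorisation, weak orthogonality, and closure under retracts.

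For the factorisation axiom, given any $f \colon X \to Y$ we take the canonical factorisation $f = \rho_f \circ \lambda_f$ provided by the cloven structure; it remains to show $\lambda_f \in \ELL$ and $\rho_f \in \R$. But this is the whole point of the data $\sigma_f$ and $\pi_f$ in~\eqref{sigpi}: the commutativity conditions satisfied by $\sigma_f$ say exactly that $\sigma_f$ is a cloven $\ELL$-map structure on $\lambda_f$, and dually $\pi_f$ is a cloven $\R$-map structure on $\rho_f$. The weak orthogonality axiom is handled at once by Proposition~\ref{howtogetfillers}: any square with a cloven $\ELL$-map on the left and a cloven $\R$-map on the right admits a canonical diagonal filler.

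The substantive point, and the only place where the functoriality of $(h,k) \mapsto P(h,k)$ is really exploited, is closure under retracts. Suppose $f \colon A \to B$ is a retract of $g \colon C \to D$ in $\E^{\mathbf 2}$, witnessed by morphisms $(i,j) \colon f \to g$ and $(u,v) \colon g \to f$ with $ui = 1_A$ and $vj = 1_B$. Given a cloven $\R$-map structure $q \colon Pg \to C$ on $g$, I would set $p = u \circ q \circ P(i,j) \colon Pf \to A$. Using the triangle identities $P(i,j) \circ \lambda_f = \lambda_g \circ i$ and $\rho_g \circ P(i,j) = j \circ \rho_f$ from~\eqref{diagfiller}, together with the retract equations $gi = jf$ and $fu = vg$, one computes
\begin{equation*}
p \circ \lambda_f = u \circ q \circ \lambda_g \circ i = ui = 1_A \quad\text{and}\quad f \circ p = v \circ g \circ q \circ P(i,j) = v \circ \rho_g \circ P(i,j) = vj \circ \rho_f = \rho_f\text,
\end{equation*}
so that $p$ equips $f$ with a cloven $\R$-map structure. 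A symmetric construction, dualising the roles of $\lambda$ and $\rho$, shows that any retract of a cloven $\ELL$-map admits a cloven $\ELL$-map structure.

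The main obstacle, if any, is notational book-keeping in the retract argument; but the key conceptual ingredient—without which closure under retracts would fail—is precisely the functoriality of the choice of fillers $P(h,k)$, which is why this condition was built into the definition of a cloven w.f.s. Once the three axioms are verified, the classes $\ELL$ and $\R$ are automatically closed under retracts in $\E^{\mathbf 2}$ and form the underlying w.f.s.\ as claimed.
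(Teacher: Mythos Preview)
Your proof is correct and follows the same approach as the paper's: verify factorisation via $\sigma_f$ and $\pi_f$, weak orthogonality via Proposition~\ref{howtogetfillers}, and closure under retracts (which the paper dismisses as ``easy to show'' but which you spell out explicitly and correctly).

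One small inaccuracy in your commentary: the retract argument you give does \emph{not} actually use the functoriality of $(h,k) \mapsto P(h,k)$. All you invoke are the two triangle identities $P(i,j)\circ\lambda_f = \lambda_g\circ i$ and $\rho_g\circ P(i,j) = j\circ\rho_f$, which are just the filler conditions from~\eqref{diagfiller}. The functoriality axiom is needed elsewhere in the paper (for the naturality of canonical fillers in Proposition~\ref{naturalfillers}, and hence for pullback-stability of the identity type interpretation), but not for Corollary~\ref{underlying} itself.
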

\begin{proof}
Firstly, it's easy to show that $\ELL$ and $\R$ are closed under retracts.
Secondly, for each $f \colon X \to Y$ we have $\lambda_f \in \ELL$ since
$(\lambda_f, \sigma_f)$ is a cloven $\ELL$-map, and $\rho_f \in \R$ since
$(\rho_f, \pi_f)$ is a cloven $\R$-map; and so we have the factorisation axiom.
Finally, we must show that $f \wo g$ for all $f \in \ELL$ and $g \in \R$. But
to do this we pick a cloven $\ELL$-map structure on $f$ and a cloven $\R$-map
structure on $g$ and then apply the preceding Proposition.
\end{proof}

\subsection{Homotopy-theoretic models of identity types}
Let us now see how the notion of cloven w.f.s.\ allows us to resolve the
problem of coherent choice with regard to fillers for squares
like~\eqref{idelim}. The idea is to refine our previous interpretation by
taking the dependent types over some $\Gamma \in \C$ to be \emph{cloven}
\mbox{$\R$-maps} $X \to \Gamma$. For each such map, we demand the existence of
a factorisation of the diagonal $X \to X \times_\Gamma X$ into a \emph{cloven}
$\ELL$-map followed by a \emph{cloven} $\R$-map; whereupon
Proposition~\ref{howtogetfillers} provides us with canonical choices of fillers
for squares like~\eqref{idelim}. Now by asking that the choices of
factorisation in~\eqref{diagfac} are suitably stable under substitution---as we
do in Definition~\ref{stablepath} below---we may ensure the stability of the
corresponding fillers in~\eqref{idelim} by exploiting a ``naturality'' property
of the liftings provided by Proposition~\ref{howtogetfillers}. In order to
describe this property, we first need a definition.

\begin{Defn}\label{mormaps}
If $(f, s) \colon U \to V$ and $(g, t) \colon X \to Y$ are cloven $\ELL$-maps,
then by a \emph{morphism of cloven $\ELL$-maps} $(f,s) \to (g,t)$ we mean a
commutative square~\eqref{cs}
such that $P(h,k).s = t.k$. We write $\ELL\text-\cat{Map}$ for the category of
cloven $\ELL$-maps and cloven $\ELL$-map morphisms. Dually, we have the notion
of \emph{morphism of cloven $\R$-maps}, giving the arrows of a category
$\R\text-\cat{Map}$.
\end{Defn}

It is now easy to verify the following:
\begin{Prop}\label{naturalfillers}
The choices of filler given by Proposition~\ref{howtogetfillers} are
\emph{natural}, in the sense that precomposing a square like~\eqref{cs} with a
morphism of cloven $\ELL$-maps $(f',s') \to (f,s)$ sends chosen fillers to
chosen fillers, as does postcomposing it with a morphism of cloven $\R$-maps
$(g, p) \to (g', p')$.
\end{Prop}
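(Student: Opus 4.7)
The plan is to unfold both sides of the asserted equality using the explicit formula for the canonical filler given in the proof of Proposition~\ref{howtogetfillers}, and then identify them via the functoriality of $P$ together with the defining conditions on morphisms of cloven $\ELL$- and $\R$-maps.

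For the precomposition statement, I would fix a morphism of cloven $\ELL$-maps $(u,v) \colon (f',s') \to (f,s)$ and a commutative square of the shape~\eqref{cs} with $f$ on the left and $g$ on the right. Precomposing yields a square with $f'$ on the left, $g$ on the right, and horizontal edges $hu$ and $kv$; by the recipe from Proposition~\ref{howtogetfillers}, its canonical filler is the composite $p . P(hu, kv) . s'$. On the other hand, the canonical filler of the original square precomposed with $v$ is $p . P(h,k) . s . v$. Functoriality of $P(\thg, \thg)$ rewrites $P(hu, kv)$ as $P(h,k) . P(u,v)$, and the morphism condition $P(u,v) . s' = s . v$ from Definition~\ref{mormaps} identifies the two composites.

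The postcomposition statement is handled symmetrically once one writes down the dual form of the morphism condition: a morphism of cloven $\R$-maps $(a, b) \colon (g, p) \to (g', p')$ is a commutative square satisfying $a . p = p' . P(a, b)$. The same two-step rewrite---functoriality of $P$ to turn $P(ah, bk)$ into $P(a,b) . P(h,k)$, followed by this dual morphism condition to replace $p' . P(a,b)$ by $a . p$---shows that the canonical filler of the postcomposed square agrees with the canonical filler of the original square postcomposed with $a$.

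I do not anticipate any substantive obstacle: both assertions amount to short manipulations using data already packaged into the definition of a cloven w.f.s. The only minor point of care is to formulate the correct dual of the $\ELL$-map morphism condition, which is forced by the duality implicit in Definition~\ref{mormaps} but is not spelt out there.
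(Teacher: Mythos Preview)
Your proposal is correct and is precisely the straightforward verification the paper has in mind; the paper itself gives no proof beyond ``It is now easy to verify the following'', and your unfolding via $j = p \circ P(h,k) \circ s$, functoriality of $P$, and the two morphism conditions is exactly what that easy verification amounts to.
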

There is one final point which we have not yet addressed. In the preceding
discussion, we have outlined how we might obtain an interpretation of identity
types in a homotopy-theoretic setting. What we have not discussed is how to
interpret the \emph{strong} elimination and computation rules. Now, to ask for
an interpretation of the strong rules is to ask for coherent choices of
diagonal filler for squares of the form~\eqref{strongiddiag}. In such a square
we know that the arrow $\pi_C$ down the right-hand side is a cloven $\R$-map,
so that if we could show that the map $(r_A)^{+ \cdots +}$ down the left-hand
side was a cloven $\ELL$-map, then we could once again obtain the desired
liftings by applying Proposition~\ref{howtogetfillers}. But observing that
$(r_A)^{+ \cdots +}$ is the pullback of $r_A$ along a composite of dependent
projections, we obtain the desired conclusion whenever our cloven w.f.s.\
satisfies---in a suitably functorial form---the \emph{Frobenius} property, that
the pullback of any cloven $\ELL$-map along a cloven $\R$-map should again be a
cloven $\ELL$-map. Note that this property has been considered before in the
context of Martin-L\"of type theory: see~\cite[Proposition
14]{Gambino2008identity} and \cite[Definition 3.2.1]{Garner2008Types}, for
example.

With this last detail in place, we are now ready to give our notion of
homotopy-theoretic model.
\begin{Defn}\label{stablepath}
Suppose that $\E$ is a finitely complete category equipped with a cloven w.f.s.
\begin{enumerate}[(i)]
\item  A \emph{choice of diagonal factorisations} is an assignation which
    to every cloven $\R$-map $(x,p) \colon X\to \Gamma$ associates a
    factorisation
\begin{equation}\label{assignedfac}
X \xrightarrow{i_x} I(x) \xrightarrow{j_x} X \times_\Gamma X
\end{equation}
of the diagonal $X \to X \times_\Gamma X$, together with a cloven
$\ELL$-map structure on $i_x$ and a cloven $\R$-map structure on $j_x$.
\vskip0.5\baselineskip
\item A choice of diagonal factorisations is \emph{functorial} if the
    assignation of~\eqref{assignedfac} provides the action on objects of a
    functor $\R$-$\cat{Map} \to \R$-$\cat{Map} \times_\E \ELL$-$\cat{Map}$.
    Explicitly, this means that for every morphism of \mbox{$\R$-maps}
    $(f,g) \colon (x,p) \to (y,q)$, there is given an arrow $I(f,g) \colon
    I(x) \to I(y)$, functorially in $(x,p)$, and in such a way that the
    squares
\begin{equation}\label{twosquares}
\cd{
  X \ar[d]_{i_x} \ar[r]^f & Y \ar[d]^{i_y} \\ I(x) \ar[r]_{I(f,g)} & I(y)
} \qquad \text{and} \qquad
\cd[@C+1em]{
  I(x) \ar[d]_{j_x} \ar[r]^-{I(f,g)} & I(y) \ar[d]^{j_y} \\ X \times_\Gamma X \ar[r]_-{f \times_g f} & Y \times_\Delta Y
}
\end{equation}
are morphisms of $\ELL$-maps and of $\R$-maps
respectively.\vskip0.5\baselineskip
\item A functorial choice of diagonal factorisations is \emph{stable} when
	for every morphism of $\R$-maps $(f,g) \colon (x,p) \to (y,q)$ whose
	underlying morphism in $\E^\mathbf 2$ is a pullback square, the
	right-hand square in~\eqref{twosquares} is also a
pullback.\vskip0.5\baselineskip
\item The cloven w.f.s.\ is \emph{Frobenius} if to every pullback square
\begin{equation*}
\cd{
  f^\ast X \ar[r]^{\bar f} \ar[d]_{\bar \imath} & X \ar[d]^i \\
  B \ar[r]_f & A
}
\end{equation*}
with $f$ a cloven $\R$-map and $i$ a cloven $\ELL$-map, we may assign a
cloven $\ELL$-map structure on $\bar \imath$. It is \emph{functorially
Frobenius} if this assignation gives rise to a functor $\R$-$\cat{Map}
\times_\E \ELL$-$\cat{Map} \to \ELL$-$\cat{Map}$.
\end{enumerate}
A \emph{homotopy-theoretic model of identity types} is a finitely complete
category $\E$ equipped with a cloven w.f.s.\ which is functorially Frobenius
and has a stable functorial choice of diagonal factorisations.
\end{Defn}
\begin{Rk}\label{nontrivial}
Note that any cloven w.f.s.\ has an obvious functorial choice of diagonal
factorisations: given an $\R$-map $(x,p) \colon X \to \Gamma$, we factorise the
diagonal morphism $\delta_x \colon X \to X \times_\Gamma X$~as
$(\lambda_{\delta_x}, \rho_{\delta_x})$,
with the cloven $\ELL$-\ and $\R$-\ structures given by $\sigma_{\delta_x}$ and
$\pi_{\delta_x}$. However, this choice is not a particularly useful one for our
purposes, since it is almost never stable. It will be the task of the next
section, and the second main contribution of this paper, to describe a general
structure---that of a \emph{path object category}---from which we may construct
cloven w.f.s.'s which do have a stable functorial choice of diagonal
factorisations.
\end{Rk}
The remainder of this section will be devoted to proving our first main
result:\begin{Thm}\label{mainthm1}A homotopy-theoretic model of identity types
is a categorical model of identity types.
\end{Thm}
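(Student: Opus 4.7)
The plan is to construct from the given homotopy-theoretic data a type category equipped with strong identity types. I would first set up the underlying type category $\C$: its contexts are the objects of $\E$, and a type in context $\Gamma$ is a cloven $\R$-map $(x, p) \colon X \to \Gamma$, with $\Gamma.A \defeq X$ and $\pi_A \defeq x$. For substitution along $f \colon \Delta \to \Gamma$, I would take a chosen pullback and equip the pulled-back projection with a cloven $\R$-map structure via the functoriality of the cloven w.f.s., applying the factorisation functor to the canonical square $(f^+, f) \colon \pi_{A[f]} \to \pi_A$ in $\E^\mathbf{2}$ to transport the $\R$-structure across.

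Once the type category is in place, I would define the identity type structure using the stable functorial choice of diagonal factorisations. For a type $A = (x,p)$, the diagonal $\delta_x \colon X \to X\times_\Gamma X$ factors canonically as $X \xrightarrow{i_x} I(x) \xrightarrow{j_x} X \times_\Gamma X$, and I would put $\Id_A \defeq (j_x, \text{given $\R$-structure})$, regarded as a type over $\Gamma.A.A^+ = X \times_\Gamma X$, together with $r_A \defeq i_x$. The required equation $\pi_{\Id_A}\cdot r_A = \delta_A$ holds by construction. For the weak elimination rule, a diagram of the form~\eqref{idelim} has $r_A = i_x$ as its left edge (a cloven $\ELL$-map) and $\pi_C$ as its right edge (a cloven $\R$-map), so Proposition~\ref{howtogetfillers} furnishes a canonical filler $J(C,d)$.

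The strong elimination rule needs slightly more work: the left edge of the square~\eqref{strongiddiag} is the iterated pullback $(r_A)^{+\cdots+}$ of $r_A$ along a composite of dependent projections. Since each such projection is a cloven $\R$-map, the functorial Frobenius hypothesis, applied step by step, endows this pullback with a cloven $\ELL$-map structure; another appeal to Proposition~\ref{howtogetfillers} then supplies the required filler $J(\Lambda, C, d)$.

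The main obstacle—and the real content of the theorem—will be verifying stability under substitution, namely the equation $\Id_A[f^{++}] = \Id_{A[f]}$ and the commutativity of~\eqref{subststabdiags} and~\eqref{subststabdiags2}. The first two are exactly what the stability axiom is designed to deliver: the morphism of $\R$-maps $\pi_{A[f]} \to \pi_A$ induced by substitution has underlying pullback square in $\E^\mathbf{2}$, so stability forces the right-hand square of~\eqref{twosquares} to be a pullback, identifying $j_{A[f]}$ with the reindexing of $j_A$ and $i_{A[f]}$ with the reindexing of $i_A$. For~\eqref{subststabdiags2}, I would invoke Proposition~\ref{naturalfillers}: the square defining the substituted filler is obtained from the original by precomposition with a morphism of cloven $\ELL$-maps and postcomposition with a morphism of cloven $\R$-maps, so naturality of the canonical fillers pins down agreement on the nose. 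Exactly the same pattern, applied to the Frobenius pullbacks appearing in the strong elimination rule, disposes of the analogous stability equations for $J(\Lambda, C, d)$, completing the construction.
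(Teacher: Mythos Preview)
Your proposal is correct and follows essentially the same route as the paper: the type category is built from cloven $\R$-maps (Proposition~\ref{typecatconstruct}), identity types come from the stable functorial diagonal factorisations with fillers supplied by Proposition~\ref{howtogetfillers} (Proposition~\ref{typecathasids}), and the strong rules follow by iterating the functorial Frobenius structure before invoking Proposition~\ref{naturalfillers} for substitution-stability. One small point to make explicit when you write it out: the equality $\Id_A[f^{++}] = \Id_{A[f]}$ is an equality of \emph{cloven} $\R$-maps, so beyond the pullback square supplied by stability you must check that the two $\R$-structures on $j_{\pi_{A[f]}}$ agree---the paper does this via the uniqueness observation at the end of Proposition~\ref{typecatconstruct} together with the functoriality (not just stability) of the diagonal factorisations.
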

We begin by defining the type category associated to a cloven w.f.s.
%
\begin{Prop}\label{typecatconstruct}
Let $\E$ be a finitely complete category equipped with a cloven w.f.s. Then
there is a type category whose category of contexts is $\E$, and whose
collection of types over $\Gamma \in \E$ is the set of cloven $\R$-maps with
codomain $\Gamma$.
\end{Prop}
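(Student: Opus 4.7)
The plan is to take the types over $\Gamma \in \E$ to be cloven $\R$-maps $(x,p) \colon X \to \Gamma$, to take the extended context $\Gamma.A$ for $A = (x,p)$ to be $X$, and to take the dependent projection $\pi_A$ to be $x$. All that remains is to define, for each $f \colon \Delta \to \Gamma$ and $A = (x,p) \in \Ty(\Gamma)$, a type $A[f] \in \Ty(\Delta)$ together with a morphism $f^+ \colon \Delta.A[f] \to \Gamma.A$ making the square~\eqref{necpb} a pullback.

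First I would choose, once and for all, a pullback square
\[
\cd{
  f^\ast X \ar[r]^{\bar f} \ar[d]_{\bar x} & X \ar[d]^x \\
  \Delta \ar[r]_f & \Gamma
}
\]
in $\E$ and set $\Delta.A[f] \defeq f^\ast X$ and $f^+ \defeq \bar f$, so that the square of~\eqref{necpb} is the chosen pullback by construction. The only real content of the proposition is then to exhibit a canonical cloven $\R$-map structure on $\bar x$, which will supply the remaining datum of $A[f]$.

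For this, I would exploit the functoriality of the chosen factorisations. The displayed pullback square, regarded as a morphism $\bar x \to x$ in $\E^\mathbf 2$, yields under the functor $P$ a canonical map $P(\bar f, f) \colon P\bar x \to Px$ satisfying $P(\bar f, f) \cdot \lambda_{\bar x} = \lambda_x \cdot \bar f$ and $\rho_x \cdot P(\bar f, f) = f \cdot \rho_{\bar x}$. The pair $\rho_{\bar x} \colon P\bar x \to \Delta$ and $p \cdot P(\bar f, f) \colon P\bar x \to X$ then satisfies $x \cdot p \cdot P(\bar f, f) = \rho_x \cdot P(\bar f, f) = f \cdot \rho_{\bar x}$, so by the universal property of the pullback there is a unique $\bar p \colon P\bar x \to f^\ast X$ with $\bar x \cdot \bar p = \rho_{\bar x}$ and $\bar f \cdot \bar p = p \cdot P(\bar f, f)$. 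To verify that $\bar p \cdot \lambda_{\bar x} = \id_{f^\ast X}$ I would postcompose with the two pullback projections: one finds $\bar x \cdot \bar p \cdot \lambda_{\bar x} = \rho_{\bar x} \cdot \lambda_{\bar x} = \bar x$ and $\bar f \cdot \bar p \cdot \lambda_{\bar x} = p \cdot P(\bar f, f) \cdot \lambda_{\bar x} = p \cdot \lambda_x \cdot \bar f = \bar f$, using the factorisation identity $\rho_{\bar x} \cdot \lambda_{\bar x} = \bar x$ in one case and $p \cdot \lambda_x = \id_X$ in the other. Setting $A[f] \defeq (\bar x, \bar p)$ completes the construction.

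I do not anticipate any genuine obstacle here. In particular, as flagged in Remark~\ref{coherenceremark1}, the definition of type category used in this paper does not demand the split coherence axioms~\eqref{cohlaws}, so we are free to choose pullbacks independently for each pair $(f, A)$ and need not establish any on-the-nose equalities such as $A[\id_\Gamma] = A$ or $A[fg] = A[f][g]$; the one mildly non-obvious step is merely the transport of the cloven $\R$-map structure along the pullback described above.
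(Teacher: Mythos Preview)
Your proof is correct and follows essentially the same approach as the paper: choose a pullback of $x$ along $f$, then transport the cloven $\R$-map structure on $x$ to the pulled-back map via the functoriality of $P$ and the universal property of the pullback. The only addition in the paper's version is a closing observation---recorded there for later use---that the resulting cloven $\R$-map structure on $\bar x$ is the \emph{unique} one making $(\bar f, f)$ into a morphism of cloven $\R$-maps $(\bar x, \bar p) \to (x, p)$; this uniqueness is invoked in the subsequent proof of Proposition~\ref{typecathasids}, so you may wish to note it as well.
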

\begin{proof}
For $A \in \Ty(\Gamma)$ corresponding to a cloven $\R$-map $(x,p) \colon X \to
\Gamma$, we define the extended context $\Gamma.A$ to be $X$ and the dependent
projection $\pi_A \colon \Gamma.A \to \Gamma$ to be $x$. Given further a
morphism $f \colon \Delta \to \Gamma$ of $\E$, we must define a type $A[f] \in
\Ty(\Delta)$ and a morphism $f^+ \colon \Delta.A[f] \to \Gamma.A$. So let the
following be a pullback diagram in $\E$:
\begin{equation*}
\cd{
  Y \ar[r]^{g} \ar[d]_y &
  X \ar[d]^x \\
  \Delta \ar[r]_f &
  \Gamma\rlap{ .}
}
\end{equation*}
Now let $q \colon Py \to Y$ be the morphism induced by the universal property
of pullback in the following diagram:
\begin{equation*}
\cd[@C+2em]{
  Py \ar[r]^-{p.P(g,f)} \ar[d]_{\rho_h} &
  X \ar[d]^x \\
  \Delta \ar[r]_f &
  \Gamma\rlap{ .}
}
\end{equation*}
It's easy to check that this makes $(y,q) \colon Y \to \Delta$ into a cloven
$\R$-map, which we define to be $A[f] \in \Ty(\Delta)$. Now we take $f^+ \colon
\Delta.A[f] \to \Gamma.A$ to be $g$, which clearly makes~\eqref{necpb} into a
pullback as required. Let us observe for future reference that the pair $(g,f)$
determines a morphism of $\R$-maps $(y,q) \to (x,p)$; and that $q$ is in fact
the \emph{unique} cloven $\R$-map structure on $y$ for which this the case.
\end{proof}
We now show that in the presence of a stable functorial choice of diagonal
factorisations, the type category constructed by the preceding Proposition has
identity types which are stable under substitution.
\begin{Prop}\label{typecathasids}
Let $\E$ be a finitely complete category bearing a cloven w.f.s.\ that is
equipped with a stable functorial choice of diagonal factorisations. Then the
associated type category of Proposition~\ref{typecatconstruct} has identity
types.
\end{Prop}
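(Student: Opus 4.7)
The plan is to make the evident definitions suggested by the diagonal factorisations, and then check that everything is stable under substitution. Given a type $A \in \Ty(\Gamma)$ corresponding to a cloven $\R$-map $(x, p) \colon X \to \Gamma$, the chosen diagonal factorisation gives
\[
X \xrightarrow{i_x} I(x) \xrightarrow{j_x} X \times_\Gamma X
\]
with $i_x$ a cloven $\ELL$-map and $j_x$ a cloven $\R$-map. I would set $\Id_A$ to be the cloven $\R$-map $j_x$, and $r_A$ to be $i_x$; the equation $\pi_{\Id_A} \circ r_A = \delta_A$ follows from $j_x \circ i_x$ being the diagonal. For each $C \in \Ty(\Gamma.A.A^+.\Id_A)$ corresponding to a cloven $\R$-map $(c, s)$, and each square of the form~\eqref{idelim}, the filler $J(C, d)$ will be that furnished by Proposition~\ref{howtogetfillers} applied to the cloven $\ELL$-map structure on $i_x$ (coming from the diagonal factorisation choice) together with the cloven $\R$-map structure $s$ on $c$.

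The real work is substitution stability. Fix $f \colon \Delta \to \Gamma$, and write $(y, q) \colon Y \to \Delta$ for the cloven $\R$-map representing $A[f]$, obtained by pulling back $(x, p)$ along $f$, with $g \colon Y \to X$ the induced map on total spaces. Then $(g, f) \colon (y, q) \to (x, p)$ is a morphism of cloven $\R$-maps whose underlying square is a pullback, so by the functoriality and stability axioms the right square of~\eqref{twosquares} is a pullback which is simultaneously a morphism of cloven $\R$-maps $j_y \to j_x$. Because the cloven $\R$-map structure on a pullback is uniquely determined (as observed at the end of the proof of Proposition~\ref{typecatconstruct}), this yields the identification $\Id_A[f^{++}] = \Id_{A[f]}$ with $f^{+++} = I(g, f)$; the commutativity of~\eqref{subststabdiags} is then precisely the left square of~\eqref{twosquares}.

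For the stability diagram~\eqref{subststabdiags2}, the idea is to invoke Proposition~\ref{naturalfillers} in two different ways. The left square of~\eqref{twosquares} exhibits $(g, I(g, f))$ as a morphism of cloven $\ELL$-maps $i_y \to i_x$, while the pullback square defining $C[f^{+++}]$ supplies a morphism of cloven $\R$-maps $(f^{++++}, I(g, f)) \colon c' \to c$, where $(c', s')$ represents $C[f^{+++}]$. Precomposing the $A$-version of~\eqref{idelim} by the former, and postcomposing the $A[f]$-version of~\eqref{idelim} by the latter, yield two composite squares which coincide, because $d \circ g = f^{++++} \circ d[f]$ by the very definition of $d[f]$ as a pullback-induced morphism. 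By Proposition~\ref{naturalfillers}, the chosen filler of this common square equals both $J(C, d) \circ I(g, f)$ and $f^{++++} \circ J(C[f], d[f])$, which is exactly~\eqref{subststabdiags2}.

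I expect the main obstacle to be the careful bookkeeping of cloven structure throughout; in particular, verifying that the canonical $\R$-map structure on a pullback coincides with the one produced by the functorial diagonal factorisation. That coincidence is essentially the content of the stability axiom, and once it is in hand, the remainder of the proof is a pair of naturality arguments.
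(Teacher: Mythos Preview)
Your proposal is correct and follows essentially the same route as the paper's own proof: define $\Id_A$ and $r_A$ from the diagonal factorisation, obtain $J(C,d)$ via Proposition~\ref{howtogetfillers}, use stability plus the uniqueness observation at the end of Proposition~\ref{typecatconstruct} to identify $\Id_A[f^{++}]$ with $\Id_{A[f]}$, and deduce~\eqref{subststabdiags2} from Proposition~\ref{naturalfillers} applied to the $\ELL$-map morphism coming from~\eqref{twosquares} and the $\R$-map morphism coming from the pullback defining $C[f^{+++}]$. Your final paragraph spelling out the pre- and post-composition argument is in fact slightly more explicit than the paper's account, but the content is the same.
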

\begin{proof}
Suppose given $A \in \Ty(\Gamma)$. To define $\Id_A$, we observe that the
square
\begin{equation*}
    \cd{
        \Gamma.A.A^+ \ar[r]^-{(\pi_A)^+} \ar[d]_{\pi_{A^+}} & \Gamma.A
\ar[d]^{\pi_A} \\
        \Gamma.A \ar[r]_-{\pi_A} & \Gamma
    }
\end{equation*}
is a pullback; so by assumption, we have a factorisation of the diagonal
morphism $\delta_A \colon \Gamma.A \to \Gamma.A.A^+$ as
\begin{equation*}
 \Gamma.A \xrightarrow{i_{\pi_A}} I(\pi_A) \xrightarrow{j_{\pi_A}} \Gamma.A.A^+
\end{equation*}
where $i_{\pi_A}$ is a cloven $\ELL$-map and $j_{\pi_A}$ a cloven $\R$-map. We
now take the identity type $\Id_A \in \Ty(\Gamma.A.A^+)$ to be the cloven
$\R$-map $j_{\pi_A}$, and take the introduction morphism $r_A \colon \Gamma.A
\to \Gamma.A.A^+.\Id_A$ to be $i_{\pi_A}$. Note that $\pi_{\Id_A}.r_A =
j_{\pi_A}.i_{\pi_A} = \delta_A$ as required. As for the elimination and
computation rules, let us suppose given $C \in \Ty(\Gamma.A.A^+.\Id_A)$ and a
commutative square of the form~\eqref{idelim}. In such a square, the dependent
projection $\pi_C$ comes with an assigned cloven  $\R$-map structure, whilst
$r_A$ bears a cloven $\ELL$-map structure (since it is $i_{\pi_A}$); and so by
Proposition~\ref{howtogetfillers}, we obtain the required choice of diagonal
filler $J(C,d) \colon \Gamma.A.A^+.\Id_A \to \Gamma.A.A^+.\Id_A.C$.

It remains to verify the stability of the above structure under substitution.
Firstly, given $A \in \Ty(\Gamma)$ and $f \colon \Delta \to \Gamma$ in $\E$, we
must show that $\Id_A[f^{++}] = \Id_{A[f]}$. But it follows from stability of
the choice of diagonal factorisations that there is a pullback square
\begin{equation*}
    \cd[@+0.5em]{
        \Delta.A[f].A[f]^+.\Id_{A[f]} \ar[r]^-{\theta} \ar[d]_{\pi_{\Id_{A[f]}}} &
        \Gamma.A.A^+.\Id_A \ar[d]^{\pi_{\Id_A}} \\
        \Delta.A[f].A[f]^+ \ar[r]_-{f^{++}} & \Gamma.A.A^+
    }
\end{equation*}
in $\E$; and so it suffices to show that in this square, pulling back the
assigned cloven $\R$-map structure on the right-hand
arrow yields the assigned $\R$-map structure on the left-hand one. For this we
observe that---by functoriality of the choice of diagonal factorisations---when
both vertical maps are equipped with their assigned $\R$-map structures, the
displayed square comprises a morphism of $\R$-maps. But by the remark
concluding the proof of Proposition~\ref{typecatconstruct}, this condition
characterises uniquely the $\R$-map structure induced by pullback on the
left-hand arrow; so that the pullback $\R$-map structure coincides with the
assigned one, as required.
%
%
%

Finally, we verify commutativity of the diagrams~\eqref{subststabdiags}
and~\eqref{subststabdiags2}. For the former, this follows immediately from
functoriality of the choice of diagonal factorisations; whilst for the latter,
we observe that---again by functoriality---the vertical maps
in~\eqref{subststabdiags} comprise a morphism of cloven $\ELL$-maps from
$r_{A[f]}$ to $r_A$; and that in the pullback diagram
\begin{equation*} \cd[@C+3em]{
    \llap{$\Delta.A[f].A[f]^+.$}\Id_{A[f]}.C[f^{+++}] \ar[r]^-{f^{++++}} \ar[d]_{\pi_{C[f^{+++}]}} & \Gamma.A.A^+.\Id_{A}\rlap{$.C$} \ar[d]^{\pi_C}\\
    \llap{$\Delta.A[f].$}A[f]^+.\Id_{A[f]} \ar[r]_-{f^{+++}} & \Gamma.A.A^+.\Id_{A}\ \text,
  }
\end{equation*}
the horizontal arrows comprise a morphism of cloven $\R$-maps from
$\pi_{C[f^{+++}]}$ to $\pi_C$. Commutativity in~\eqref{subststabdiags2}
therefore follows from Proposition~\ref{naturalfillers}.
\end{proof}
This does not quite complete the proof of Theorem~\ref{mainthm1}, since a
categorical model of identity types must model the \emph{strong} elimination
and computation rules, whereas the previous Proposition only indicates how to
model the standard ones. For this we make use the Frobenius property of a
homotopy-theoretic model:
%
%
\begin{proof}[of Theorem~\ref{mainthm1}]
By Proposition~\ref{typecathasids}, we know that we already have a type
category with identity types; hence it suffices to show that these identity
types are strong. So suppose given a commutative diagram as
in~\eqref{strongiddiag}. By assumption, $\pi_C$ is a cloven $\R$-map; and by
repeated application of the Frobenius structure, $(r_A)^{+ \cdots +}$ is a
cloven $\ELL$-map; and so we may once again apply
Proposition~\ref{howtogetfillers} to obtain the required diagonal filler. All
that remains is to verify the stability of these new fillers under
substitution; and this follows exactly as before, but now using also the
functoriality of the Frobenius structure.
\end{proof}

\section{Path object categories}\label{catframework}
We have now described the notion of a homotopy-theoretic model of identity
types, and shown that any such gives rise to a categorical model. However, as
we noted in Remark~\ref{nontrivial}, it is in practice rather hard to verify
that a category satisfies the axioms for a homotopy-theoretic model. We are
therefore going to introduce a further set of axioms which are much simpler to
verify, but still sufficiently strong to allow the construction of a
homotopy-theoretic model. We call a category satisfying these axioms a
\emph{path object category}. This section gives the definition; the following
one provides a number of examples; and Section~\ref{typecatstruct} proves our
second main result, that every path object category gives rise to a
homotopy-theoretic model of identity types.


\subsection{Path objects} Throughout the rest of this section, we assume given
a finitely complete category $\E$. The first piece of structure that we will
require is a choice of ``path object'' for each object of $\E$.

\vskip0.5\baselineskip\textsc{Axiom 1}.\label{pathobject} For every $X \in \E$,
there is given an internal category
\begin{equation*}
    \cd[@C+1em]{
        X \ar[r]|-{r_X} & MX \ar@<-6pt>[l]_-{s_X} \ar@<6pt>[l]^-{t_X} & MX
\,{}_{s_X}\!\!\times_{t_X} MX \ar[l]_-{{}\,\,\,m_X}
    }\ \text.
\end{equation*}
together with an involution $\tau_X \colon MX \to MX$ which defines an internal
identity-on-objects isomorphism between the category $MX$ and its opposite. We
require this structure to be functorial in $X$---thus the assignation $X
\mapsto MX$ should underlie a functor $\E \to \E$, and the maps $s_X$, $t_X$,
$m_X$, $r_X$ and $\tau_X$ should be natural in $X$---and that the functor $M$
should preserve pullbacks.
\vskip0.5\baselineskip

\begin{Rk}
We may rephrase the above in a number of ways. The naturality of $r$, $m$, $s$
and $t$ is equivalent to asking that every morphism $f \colon X \to Y$ of $\E$
should induce an internal functor $(f, Mf) \colon (X, MX) \to (Y, MY)$; and
this is in turn equivalent to asking that we should have a functor $\E \to
\cat{Cat}(\E)$ which is a section of the functor $\mathrm{ob} \colon
\cat{Cat}(\E) \to \E$.
\end{Rk}

The example that we have in mind---which will be developed in more detail in
the following section---is the category of topological spaces. Here the first
thing one might try is to take $MX \defeq X^{[0,1]}$, with $r_X$ and $m_X$
given by the constant path and concatenation of paths respectively. However,
this definition fails to satisfy the category axioms, since concatenation of
paths is only associative and unital ``up to homotopy''. To rectify this, we
take $MX$ instead to be the \emph{Moore path space} of $X$, given by the set
$    \set{(r, \phi)}{r \in \mathbb R^+\text,\ \phi \colon [0, r] \to X}$
equipped with a suitable topology (detailed in Section~\ref{exs} below). Now
$r_X$ sends a point of $X$ to the path of length $0$ at that point, whilst
$m_X$ takes paths of length $r$ and $s$ respectively to the concatenated path
of length $r+s$. With this definition, we determine an internal category $MX
\rightrightarrows X$ as required.


\subsection{Strength} \label{strengthsection} Returning to our axiomatic framework, the next piece of structure we
wish to encode is a way of contracting a path onto one of its endpoints. As a
first attempt at formalising this, we might require that for every path $\phi
\colon a \to b$ in $X$ there be given a ``path of paths'' $\eta_X(\phi)$ of the
form indicated by the following diagram:
\begin{equation*}
    \cd[@!@+0.5em]{
    a \ar[r]^\phi \ar[d]_\phi \rtwocell{dr}{\eta_X(\phi)} &
    b \ar[d]^{r_X(b)} \\
    b \ar[r]_{r_X(b)} & b
    } \ \text.
\end{equation*}
This amounts to asking for a morphism $\eta_X \colon MX \to MMX$ which yields
the identity upon postcomposition with $s_{MX}$ or $Ms_X$, and yields the
composite $r_X . t_X$ upon postcomposition with $t_{MX}$ or $Mt_X$. However,
this formalisation fails to capture our motivating topological example. Indeed,
for a Moore path $\phi \colon a \to b\in MX$ of length $r$, the corresponding
path of paths $\eta_X(\phi)$ must also be of length $r$, since $Ms_X$ preserves
path length and we require that $Ms_X(\eta_X(\phi)) = \phi$. But then whenever
$r > 0$ we cannot have $Mt_X(\eta_X(\phi)) = r_X(b)$, since $Mt_X$ also
preserves path length and $r_X(b)$ is of length $0$.

We must therefore refine our formalisation. In the motivating topological case,
we can do this by requiring that $Mt_X(\eta_X(\phi))$ be not a path of length
0, but rather a path of the same length as $\phi$, but constant at $b$. In
order to capture the idea of a non-identity, but constant, path in our abstract
framework, we introduce a further piece of structure.

\vskip0.5\baselineskip\textsc{Axiom 2}. The endofunctor $M$ comes equipped with
a \emph{strength}~\cite{Kock1970Monads}
\begin{equation*}
\alpha_{X, Y} \colon MX \times Y \to M(X \times Y)
\end{equation*}
with respect to which $s$, $t$, $r$, $m$ and $\tau$ are strong natural
transformations.\vskip0.5\baselineskip

In fact, such a strength is determined, up to natural isomorphism, by its
components of the form $\alpha_{1, X} \colon M1 \times X \to MX$, since every
square of the following form is a pullback:
\begin{equation}\label{pbcomponents}
    \cd[@C+0.5em]{
      MX \times Y \ar[r]^-{\alpha_{X,Y}} \ar[d]_{M! \times Y} &
      M(X \times Y) \ar[d]^{M\pi_2} \\
      M1 \times Y \ar[r]_-{\alpha_{1,Y}} &
      MY\rlap{ .}
    }
\end{equation}
To see this, we consider the diagram
\begin{equation*}
    \cd[@C+0.5em]{
      MX \times Y \ar[r]^-{\alpha_{X,Y}} \ar[d]_{M! \times Y} &
      M(X \times Y) \ar[r]^-{M\pi_1} \ar[d]_{M\pi_2} &
      MX \ar[d]^{M!} \\
      M1 \times Y \ar[r]_-{\alpha_{1,Y}} &
      MY \ar[r]_-{M!} &
      M1\rlap{ .}
    }
\end{equation*}
In it, the right-hand square is a pullback since $M$ preserves pullbacks, and
the outer square is a pullback since the composites along the top and bottom
are equal, by naturality of $\alpha$, to $\pi_1$. Hence the left-hand square is
also a pullback as claimed.

We may give the following interpretation of the components $\alpha_{1,X}$. The
object $M1$ we think of as the ``object of path lengths'', and the morphism
$\alpha_{1, X} \colon M1 \times X \to MX$ as taking a length $r \in M1$ and an
element $x \in X$ and returning the path of length $r$ which is constant at
$x$. In fact, by naturality of $\alpha$ in $Y$ together with strength of $t$,
we see that the diagram
\begin{equation}\label{retractdiagram}
    M1 \times X \xrightarrow{\alpha_{1, X}} MX \xrightarrow{(M!,\, t_X)} M1
\times X
\end{equation}
exhibits $M1 \times X$ as a retract of $MX$, so that we may think of $M1 \times
X$ as being the ``subobject of constant paths'' of $MX$. In our topological
example, we have a strength $\alpha_{X, Y}$ that takes a Moore path $(r, \phi)
\in MX$ and a point $y \in Y$ and returns the path $(r, \psi) \in M(X \times
Y)$, where $\psi(s) = (\phi(s), y)$. Now the subspace of $MX$ determined by the
retract~\eqref{retractdiagram} is comprised precisely of the constant paths of
arbitrary length in $X$, as desired.

\subsection{Path contraction} With the aid of Axiom 2, we may now formalise the structure allowing the
contraction of a path onto its endpoint. Observe that it is in
equation~\eqref{Mt} that we make use of the notion of constant path.

\vskip0.5\baselineskip\textsc{Axiom 3}. There is given a strong natural
transformation $\eta \colon M \Rightarrow MM$ such that the following equations
hold:
\begin{align}
    s_{MX} .\eta_X & = \id_{MX} \label{seta}\\
    t_{MX} .\eta_X & = r_X. t_X \label{teta}\\
    Ms_{X} .\eta_X & = \id_{MX} \label{Ms}\\
    Mt_{X} .\eta_X & = \alpha_{1,X} . (M!, t_X) \label{Mt}\\
    \eta_{X} . r_X & = r_{MX} . r_X \ \text.\label{tri2}
\end{align}
\vskip0.5\baselineskip
\begin{Defn}
By a \emph{path object category}, we mean a finitely complete category~$\E$
satisfying Axioms~1, 2 and 3.
\end{Defn}
Our motivation for introducing the notion of path object category is the
following theorem, whose proof we give in Section \ref{typecatstruct} below.
\begin{Thm}\label{secondmain}
A path object category is a homotopy-theoretic model of identity types; and
hence also a categorical model of identity types.
\end{Thm}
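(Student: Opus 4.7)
The plan is to exhibit the three pieces of structure demanded by Definition \ref{stablepath} — a cloven w.f.s., functorial Frobenius property, and stable functorial diagonal factorisations — and then invoke Theorem~\ref{mainthm1}.

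First, I would build the cloven w.f.s.\ by the mapping path space construction. Given $f \colon X \to Y$, define $Pf$ as the pullback of $s_Y \colon MY \to Y$ along $f$, set $\lambda_f = (\id_X, r_Y \circ f)$, and let $\rho_f \colon Pf \to Y$ be the composite with $t_Y$. The choice of fillers for \eqref{cs} is produced via the composition $m$ of $M$: from a commutative square one extracts, using the strength $\alpha$, a homotopy in $Pg$ whose target is the required lift. The maps $\sigma_f$ and $\pi_f$ needed in~\eqref{sigpi} are obtained from the identity-on-objects part and the involution $\tau$ of the internal category structure; here $\tau$ is used to reverse paths when constructing $\pi_f$. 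A cloven $\ELL$-map is therefore a $f$ equipped with a ``deformation retract onto constant paths''~structure, and a cloven $\R$-map is a $f$ equipped with a chosen path-lifting operation. All of the cloven structure is functorial because $M$ and its natural transformations are.

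Next, for the diagonal factorisations: given a cloven $\R$-map $(x,p) \colon X \to \Gamma$, I would form the \emph{relative path object}
\[
I(x) \;\defeq\; MX \times_{M\Gamma} (M1 \times \Gamma),
\]
where the map $M1 \times \Gamma \to M\Gamma$ is $\alpha_{1,\Gamma}$ (recording ``constant paths of arbitrary length'' in $\Gamma$) and $MX \to M\Gamma$ is $Mx$. This records paths in $X$ whose projection to $\Gamma$ is constant-but-of-definite-length. The morphism $i_x \colon X \to I(x)$ comes from $r_X$ together with the length-$0$ constant path at $x$; the morphism $j_x \colon I(x) \to X \times_\Gamma X$ takes $(s,t)$. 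The cloven $\ELL$-structure on $i_x$ uses $\eta_X$ from Axiom~3 (the equations \eqref{seta}--\eqref{Mt} are exactly what is needed for $\eta_X$ to land in $I(x)$ and to provide the contraction), and the cloven $\R$-structure on $j_x$ uses the composition $m_X$ of the internal category. Functoriality in morphisms of cloven $\R$-maps is immediate from functoriality of $M$ and $\alpha$, and stability under pullback squares follows because $M$ preserves pullbacks (Axiom~1) and the construction of $I(x)$ is a finite limit applied fibrewise — so a pullback square of $\R$-maps gives rise to a pullback of the corresponding relative path objects.

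Finally, for the functorial Frobenius property, given a pullback square with $f$ a cloven $\R$-map (hence equipped with a path-lifting operation) and $i$ a cloven $\ELL$-map (hence equipped with a deformation structure), I would construct the deformation structure on $\bar\imath$ by transporting $i$'s contraction along $f$'s path-lifting — this is exactly the manoeuvre for which the strength $\alpha$ (Axiom~2) is tailor-made: it lets us lift homotopies valued in the base along a fibration to homotopies valued in the total space. Functoriality of this assignment again follows from naturality of all the structure maps in $M$. The main obstacle I anticipate is verifying the Frobenius clause: it demands that path-lifts chosen in $X$ pull back to path-lifts in $f^\ast X$ compatibly with \emph{the chosen} $\ELL$-structure on the pulled-back $\bar\imath$, and checking this will require careful manipulation with $\alpha$, $\eta$, and the pullback-preservation of $M$. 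Once all three structures are in place, Theorem~\ref{mainthm1} immediately yields the second conclusion.
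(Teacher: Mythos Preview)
Your overall architecture is right, and your definition of the relative path object $I(x) = MX \times_{M\Gamma} (M1 \times \Gamma)$ via the strength is exactly the paper's $M_\Gamma(x)$. But several of the individual constructions are misattributed to the wrong axioms, and one step is a genuine gap.

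\textbf{Cloven w.f.s.} In the paper the filler $P(h,k)$ is obtained purely from the pullback universal property of $Pg$ applied to $Mk$ and $h$; neither $m$ nor $\alpha$ enters. More importantly, you have the roles of $\sigma_f$ and $\pi_f$ reversed: the strong deformation retraction $\sigma_f$ for $\lambda_f$ is built from the path-contraction $\eta$ (Axiom~3), not from identities, while $\pi_f$ is built from the composition $m$, not from $\tau$. The involution $\tau$ is not used at all in constructing the cloven w.f.s.

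\textbf{Frobenius.} The paper's Frobenius argument does not rely on the strength; it uses the reversal $\tau$ to turn the deformation $\theta \colon 1_A \Rightarrow ik$ into a homotopy $ikf \Rightarrow f$, then lifts that through the $\R$-map $f$, then reverses back. The strength plays no direct role here.

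\textbf{The real gap.} The serious omission is the cloven $\R$-structure on $j_x = (s_x,t_x) \colon I(x) \to X \times_\Gamma X$. You say this ``uses the composition $m_X$'', but that is not enough. Given a homotopy $(\phi_1,\phi_2)$ in $X \times_\Gamma X$ with target $(a,b)$ and a lift $\chi \colon a \Rightarrow b$ constant over $\Gamma$, the naive composite $\phi_2^{\circ} \cdot \chi \cdot \phi_1$ is a path $a' \Rightarrow b'$ in $X$ but is \emph{not} constant over $\Gamma$, so it does not land in $I(x)$. The paper's solution is substantially harder: it uses the given $\R$-structure $p$ on $x$ to transport $\chi$ along the base homotopy $\psi = x\phi_1 = x\phi_2$, via two auxiliary lemmas (a ``cartesianness'' property and a ``functoriality'' property of path-lifting, both built from $\eta$, $\tau$, $\alpha$, and $p$), and only then composes three pieces that are each constant over the new basepoint. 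This is the technical heart of the entire proof, and your sketch does not account for it.
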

\begin{Rk}\label{allowableweakening}
The above structure is in fact slightly more than is necessary for our
argument. Axiom 1 posits an internal category structure $MX \rightrightarrows
X$ on each object $X$ of $\E$, but we will make no use at all of the
associativity of composition in these internal categories, and need right
unitality only at one particular (though crucial) point; on the other hand, we
will make repeated and unavoidable use of left unitality. Now, for the examples
we have in mind, it happens that, in ensuring the unitality axioms, we also
obtain associativity: but in recognition of other potential examples where this
may not be so, we give a more precise delineation of what is needed. On
replacing Axiom 1 by the following weaker Axiom 1$'$, we may still carry
through the entire proof of Theorem~\ref{secondmain}, as given in
Section~\ref{typecatstruct} below; and on replacing it by the yet weaker Axiom
1$''$, may still carry out at least the arguments of
Sections~\ref{sec:clovenwfs} and~\ref{sec:functorialfrob}, though that of
Section~\ref{facdiag} does not go through, as without right unitality we cannot
complete the proof of Proposition~\ref{choiceofdiagonalfacs}.

\vskip0.5\baselineskip\textsc{Axiom 1$'$}. As Axiom 1, but drop associativity
of composition, and functoriality of $\tau$ with respect to binary composition.
Explicitly, this means that for every $X \in \E$, there is given, functorially
in $X$, a diagram
\begin{equation*}
    \cd[@C+1em]{
        X \ar[r]|-{r_X} & MX \ar@(ul,ur)[]|{\tau_X} \ar@<-6pt>[l]_-{s_X} \ar@<6pt>[l]^-{t_X} & MX
\,{}_{s_X}\!\!\times_{t_X} MX \ar[l]_-{{}\,\,\,m_X}
    }\ \text,
\end{equation*}
such that $M$ preserves pullbacks, and such that the following equations hold:
\begin{align*}
    s_{X} . r_X & = 1_{X} & m_{X}  (r_X.t_X,\, 1_{MX}) & = 1_{MX}\\
    t_{X} . r_X & = 1_{X} &s_{X} . \tau_X & = t_{X} \\
    s_{X} . m_X & = s_X. \pi_2 & t_{X} . \tau_X & = s_{X} \\
    t_{X} . m_X & = t_X. \pi_1 &    r_X  & = \tau_X . r_X \\
    m_{X}  (1_{MX},\, r_X.s_X) & = 1_{MX} &     \tau_{X} . \tau_X & = 1_X\ \text.
\end{align*}
\vskip0.5\baselineskip\textsc{Axiom 1$''$}. As Axiom 1$'$, but drop also the
right unitality axiom (uppermost in the right-hand column of the preceding
list).\vskip0.5\baselineskip

In spite of the above, we shall continue to write as if the stronger Axiom 1
were satisfied, for two reasons. Firstly, this allows us all the terminological
conveniences afforded by the language of internal category theory; and
secondly, as noted above, in each of the examples we shall now give it
\emph{is} the stronger Axiom 1 that holds.
\end{Rk}



\section{Examples of path object categories}\label{exs}
\subsection{Topological spaces}
\begin{Prop}
The category of topological spaces may be equipped with the structure of a path
object category.
\end{Prop}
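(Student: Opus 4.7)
The plan is to take $MX$ to be the Moore path space with underlying set $\set{(r,\phi)}{r \in \mathbb R^+,\ \phi \colon [0,r] \to X \text{ continuous}}$ as flagged in the introduction to the axioms, and then to check Axioms~1, 2 and~3 in turn. Topologically, I would identify $(r,\phi)$ with the pair $(r,\hat\phi) \in \mathbb R^+ \times X^{\mathbb R^+}$, where $\hat\phi$ extends $\phi$ by $\hat\phi(u) = \phi(\min(u,r))$, and give $MX$ the resulting subspace topology (working if necessary in a convenient subcategory such as $k$-spaces to avoid pathologies with the compact--open topology).

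First I would verify Axiom~1. The source and target maps $s_X,t_X \colon MX \to X$ send $(r,\phi)$ to $\phi(0)$ and $\phi(r)$ respectively; the unit $r_X \colon X \to MX$ sends $x$ to the length-zero path $(0,x)$; composition $m_X$ is length-additive concatenation of composable paths; and $\tau_X$ sends $(r,\phi)$ to $(r,\ u \mapsto \phi(r-u))$. The whole point of passing from $X^{[0,1]}$ to the Moore construction is to secure strict associativity and unitality: concatenation with a length-zero path returns the other path verbatim, and associativity of $m_X$ reduces to associativity of addition of lengths. That $M$ preserves pullbacks rests on the set-theoretic fact that a Moore path in $A \times_C B$ is exactly a pair of Moore paths of equal length in $A$ and $B$ projecting to the same path in $C$, together with the verification that the evident bijection is a homeomorphism.

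Next I would construct the strength of Axiom~2 by the formula $\alpha_{X,Y}(r,\phi,y) = (r,\ u \mapsto (\phi(u),y))$; continuity is immediate, and the compatibility of $s$, $t$, $r$, $m$ and $\tau$ with $\alpha$ is essentially forced by the formulae. For Axiom~3 I would define $\eta_X(r,\phi)$ to be the Moore path in $MX$ of length $r$ whose value at $u \in [0,r]$ is the tail $(r-u,\ s \mapsto \phi(s+u))$. Evaluating at $u = 0$ and $u = r$ gives $\phi$ and the length-zero constant path at $\phi(r)$, which are \eqref{seta} and \eqref{teta}; applying $s_X$ and $t_X$ fibrewise gives back $\phi$ and the length-$r$ constant path at $\phi(r)$, yielding \eqref{Ms} and \eqref{Mt}; and \eqref{tri2} holds because every tail of a length-zero path is itself a length-zero constant path.

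The main obstacle is not any single axiom but the topological bookkeeping needed to make the construction work uniformly across all the pieces at once. In particular, one must choose the topology on $MX$ so that $s_X, t_X, r_X, m_X, \tau_X, \alpha_{X,Y}$ and $\eta_X$ are all simultaneously continuous, and so that $M$ restricts to a pullback-preserving endofunctor (the nontrivial point being that pullbacks in the ambient category of spaces must agree, at the level of underlying sets of Moore paths, with the set-theoretic pullbacks). I expect this technical scaffolding to account for most of the actual proof, with the axiomatic equations themselves being essentially forced by the explicit formulae above.
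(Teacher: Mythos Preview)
Your proposal is correct and follows essentially the same approach as the paper: the same Moore path space topologised via the embedding into $\mathbb{R}_+ \times X^{\mathbb{R}_+}$, the same formulae for $s_X,t_X,r_X,m_X,\tau_X,\alpha_{X,Y}$, and the same tail-of-the-path definition of $\eta_X$. The paper likewise leaves the continuity and pullback-preservation verifications to the reader, so your assessment of where the work lies matches theirs (though note the paper works directly in $\cat{Top}$ rather than passing to a convenient subcategory).
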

\begin{proof}
As discussed above, we cannot define the path space $MX$ of a topological space
$X$ to be $X^{[0,1]}$, since composition of paths $[0,1] \to X$ is associative
and unital only ``up to homotopy''. Instead, we consider paths of varying
lengths and the \emph{Moore path space}
\[ MX = \{ (\ell, \phi) \, | \, \ell \in \mathbb{R}_{+},\, \phi \colon [0, \ell] \to X \} \ \text.\]
(Here ${\mathbb R}_+$ denotes the set of non-negative real numbers.) To define
a suitable topology on $MX$, we observe that there is a monomorphism $MX
\hookrightarrow \mathbb{R}_{+} \times X^{\mathbb{R}_{+}}$  sending $(\ell,
\phi)$ to the pair $(\ell, \overline{\phi})$, where $\overline{\phi}$ is
defined by
\begin{eqnarray*}
\overline{\phi}(x) & = & \left\{ \begin{array}{ll}
\phi(x) & \mbox{if } x \leq \ell\text, \\
\phi(\ell) & \mbox{if } x \geq \ell\text.
\end{array} \right.
\end{eqnarray*}
We may therefore topologise $MX$ as a subspace of $\mathbb{R}_{+} \times
X^{\mathbb{R}_{+}}$, where the latter is equipped with its natural topology
(coming from the product and the compact-open topology). We have continuous
projections $s_X,t_X \colon MX \rightrightarrows X$ sending $(\ell, \phi)$ to
$\phi(0)$ and $\phi(\ell)$ respectively; and as is well-known, the topological
graph this defines  bears the structure of a topological category, with the
identity $r_X(x)$ being the constant path at $x$ of length $0$, and the
composition $m_X$ of paths $(\ell, \phi)$ and $(m, \psi)$ being the path $\chi$
of length $\ell + m$ given by:
\begin{eqnarray*}
\chi(x) & = & \left\{ \begin{array}{ll}
\phi(x) & \mbox{if } x \leq \ell, \\
\psi(x - \ell) & \mbox{if } x \geq \ell.
\end{array} \right.
\end{eqnarray*}
Moreover, this topological category bears an involution $\tau_X$, which sends a
path $\phi$ of length $\ell$ to the same path in reverse: that is, the path
$\phi^o$ of length $\ell$ with $\phi^o(x) = \phi(\ell-x)$.

We now define a strength $\alpha_{X, Y} \colon MX \times Y \to M(X \times Y)$
by sending the pair $((\ell, \phi), y)$ to the path $\psi$ of length $\ell$ in
$X \times Y$  defined by $\psi(x) = (\phi(x), y)$. Finally, the maps $\eta_X
\colon MX \to MMX$ are given by contracting a path to its endpoint: so we take
$\eta_X(\ell, \phi)$ to be the path $\psi$ in $MX$ of length $\ell$ whose value
at $x \leq \ell$ is the path of length $\ell - x$ given by $\psi(x)(y) =
\phi(x+y)$.
We have now given all the data required for a path object category; the
remaining verifications are entirely straightforward and left to the reader.
\end{proof}
\begin{Cor}
The category of topological spaces may be equipped with the structure of a
categorical model of identity types. In this model, propositional equalities
between terms of closed type are interpreted by Moore homotopies.
\end{Cor}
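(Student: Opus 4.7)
The plan is to deduce this Corollary by composing the two preceding results with a short unwinding of definitions. The first sentence is essentially a two-step appeal: the Proposition just established equips $\cat{Top}$ with the structure of a path object category, and Theorem~\ref{secondmain} asserts that every path object category is a categorical model of identity types. Invoking these in succession yields the claimed structure on $\cat{Top}$.

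For the interpretive second sentence, I would unwind the construction of $\Id$ in the closed-type case. A closed type in the categorical model corresponds to a cloven $\R$-map $X \to 1$, i.e.\ simply a topological space $X$, and its identity type is obtained by factorising the diagonal $X \to X \times X$. When the machinery of Section~\ref{typecatstruct} is specialised to the slice over $1$, the resulting factorisation reduces, modulo cloven-map data, to the canonical path-space factorisation
\[
X \xrightarrow{r_X} MX \xrightarrow{(s_X,\, t_X)} X \times X\text,
\]
where $MX$ is the Moore path space defined in the preceding proof. Consequently the identity type $\Id_X \in \Ty(X \times X)$ is represented by the fibration $(s_X,t_X) \colon MX \to X \times X$, and for points $a, b \colon 1 \to X$ the set of closed terms of type $\Id_X(a,b)$ is the fibre of this fibration over $(a,b)$, which is precisely the space of Moore paths from $a$ to $b$ in $X$—that is, Moore homotopies between those points.

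I do not foresee a genuine obstacle: both sentences are corollaries of material either proven above or to be proven below. The only mildly non-trivial check is that the factorisation and cloven-map data supplied to the map $X \to 1$ by the path-object-category construction of Section~\ref{typecatstruct} agree on the nose with the naive factorisation $(r_X,(s_X,t_X))$ displayed above; this will follow directly from the definitions once the construction in the slice over $1$ is spelled out, since Axiom~1 provides exactly the internal category structure needed, with no further contribution from the slicing.
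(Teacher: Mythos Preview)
Your proposal is correct and matches the paper's intent: the paper gives no explicit proof for this Corollary, treating it as immediate from the preceding Proposition together with Theorem~\ref{secondmain}, which is exactly the two-step appeal you describe. Your unwinding of the second sentence via the factorisation $X \xrightarrow{r_X} MX \xrightarrow{(s_X,t_X)} X \times X$ is the right way to make the interpretive claim precise, and agrees with how the paper handles the analogous corollaries for groupoids and chain complexes.
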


\subsection{Groupoids}
In our next few examples, we revisit some previously constructed models of
identity types and show that they fit into our framework, beginning with one of
the earliest instances of a higher-dimensional model of type theory: the
\emph{groupoid model} of~\cite{Hofmann1998groupoid}.
\begin{Prop}\label{gpdmodel}
The category of small groupoids bears a structure of path object category.
\end{Prop}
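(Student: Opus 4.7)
The plan is to take $MG \defeq G^{\mathbb I}$, where $\mathbb I$ is the indiscrete (``walking isomorphism'') groupoid on two objects $0, 1$. Since $\cat{Gpd}$ is cartesian closed, this defines a functor $M \colon \cat{Gpd} \to \cat{Gpd}$ which preserves pullbacks, being a right adjoint. All of the structure maps required by Axioms~1--3 will arise by precomposition from functors between finite products and pushouts of copies of $\mathbb I$, so that verifying the axioms reduces to essentially combinatorial checks on the groupoid $\mathbb I$ itself.

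To flesh out Axiom~1, the source and target $s_G, t_G \colon MG \rightrightarrows G$ come from the object inclusions $\{0\}, \{1\} \hookrightarrow \mathbb I$; the identity $r_G$ comes from the unique functor $\mathbb I \to 1$; and the involution $\tau_G$ comes from the non-identity automorphism of $\mathbb I$ swapping $0$ and $1$. For the composition, note that the pullback $MG \,{}_{s_G}\!\!\times_{t_G} MG$ is naturally isomorphic to $G^{\mathbb I \vee \mathbb I}$, where $\mathbb I \vee \mathbb I$ denotes the pushout $\mathbb I \sqcup_1 \mathbb I$ identifying the $0$-endpoint of one copy with the $1$-endpoint of the other. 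A short calculation reducing alternating composites of generators modulo the groupoid relations shows this pushout to be the indiscrete groupoid on three objects; composition $m_G$ is then induced by the unique functor $\mathbb I \to \mathbb I \vee \mathbb I$ picking out the outer two objects. The internal category axioms (strict associativity, unitality, and compatibility with $\tau$) then hold on the nose, because between any two objects of an indiscrete groupoid there is a unique morphism, so any two functors between such diagrams which agree on objects agree everywhere.

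The strength $\alpha_{G, H}$ is the canonical cartesian strength on $(-)^{\mathbb I}$, obtained by transposing the evident evaluation-and-projection map $G^{\mathbb I} \times H \times \mathbb I \to G \times H$; strong naturality of $s, t, r, m, \tau$ follows because each is obtained by precomposition with a map of $\mathbb I$-shaped diagrams. For Axiom~3, $MMG = G^{\mathbb I \times \mathbb I}$, so $\eta_G$ corresponds by (enriched) Yoneda to a functor $p \colon \mathbb I \times \mathbb I \to \mathbb I$. The five equations translate into boundary conditions on $p$: equations~\eqref{seta} and~\eqref{Ms} force $p(0, -) = \id_{\mathbb I} = p(-, 0)$; and equations~\eqref{teta} and~\eqref{Mt}, using that $M 1 = 1$ and hence $\alpha_{1, G}(M!, t_G) = r_G t_G$, force both $p(1, -)$ and $p(-, 1)$ to be constant at $1$. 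These four conditions pin $p$ down on objects as the join $(x, y) \mapsto x \vee y$, which then extends uniquely to a functor by indiscreteness of $\mathbb I$; equation~\eqref{tri2} and the strength of $\eta$ follow automatically.

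The only substantive point is the identification of the pushout $\mathbb I \vee \mathbb I$ as again indiscrete, on which strictness of the internal category structure hinges; everything else is a formality arising from the fact that $\mathbb I$ is rigid enough (contractible, with unique morphisms between any pair of objects) to force all of the relevant diagrams to commute on the nose.
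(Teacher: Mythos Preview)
Your proof is correct and yields exactly the same path object structure as the paper: both take $MG = G^{\mathbb I}$ with the evident source, target, identity, composition, involution, strength, and with $\eta$ given by the commuting square whose two non-trivial edges are $f$. The only difference is expository: whereas the paper verifies the axioms by direct inspection of objects and arrows in $G^{\mathbb I}$, you argue co-representably, deducing each structure map from a functor between finite diagrams built out of $\mathbb I$ and invoking indiscreteness to trivialise the equations---this is precisely the ``interval object'' argument the paper gives in generality in the following subsection, with your join $p = \vee$ playing the role of the interval's join.
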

\begin{proof}
Let $\I$ be the groupoid with two objects $0$ and $1$ and two non-identity
arrows $0 \to 1$ and $1 \to 0$ (which clearly must be each others' inverses).
Now for any groupoid $X$, we define $MX = X^\I$; so the objects of $MX$ are the
arrows in $X$ and the morphisms of $MX$ are commutative squares. The functors
$s_X, t_X \colon MX \to X$ are obtained by taking domains and codomains,
respectively, whilst the functor $r_X \colon X \to MX$ sends an object of $X$
to the identity arrow at that object. The functor $m_X\colon MX \times_X MX \to
MX$ sends a pair of arrows $(g,f)$ to their composite $gf$; whilst $\tau_X
\colon MX \to MX$ sends each arrow to its inverse. It is immediate that these
data equip $X$ with the structure of an internal groupoid. We obtain the
strength $\alpha_{X, Y} \colon MX \times Y \to M(X \times Y)$ by mapping an
arrow $f$ of $X$ and an object $y$ of $Y$ to the arrow $(f, \id_y)$ of $X
\times Y$; and finally, we define the maps $\eta_X \colon MX \to MMX$ by
sending an arrow $f \colon a \to b$ of $X$ to the commuting square
\begin{equation*}
\cd[@-0.5em]{ a \ar[r]^f \ar[d]_f & b \ar[d]^\id \\ b \ar[r]_\id & b\rlap{ .}}
\end{equation*}
Again, the remaining verifications are straightforward.
\end{proof}
\begin{Cor}
The category of small groupoids may be equipped with the structure of a
categorical model of identity types. In this model, propositional equalities
between terms of closed type are interpreted by natural isomorphisms between
functors.
\end{Cor}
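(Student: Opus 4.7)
The proof naturally splits into two parts: establishing the model structure, and unpacking the semantics of propositional equality. The first part is immediate: by the preceding Proposition, the category of small groupoids is a path object category, and hence by Theorem~\ref{secondmain} it carries a sound interpretation of the identity type rules.

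For the second part, the plan is to unfold the construction of $\Id_A$ in the type category built from a path object category, as sketched in Propositions~\ref{typecatconstruct} and~\ref{typecathasids} and to be made explicit in Section~\ref{typecatstruct}. The idea is that the identity type associated with a cloven $\R$-map $X \to \Gamma$ is obtained by factoring the diagonal $X \to X \times_\Gamma X$ through a path object built from $M$. In the closed case $\Gamma = 1$, the relevant path object is $MX = X^{\I}$, so $\Id_X$ is realised by the projection $(s_X, t_X) \colon X^{\I} \to X \times X$ (equipped with a suitable cloven $\R$-map structure), with the reflexivity term given by $r_X$ sending each object of $X$ to the identity arrow at that object.

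Now a closed term $a \colon X$ is a functor $1 \to X$, i.e.\ an object of $X$, and a propositional equality $p \colon \Id_X(a,b)$ is an element of the pullback of $X^{\I} \to X \times X$ along the functor $(a,b) \colon 1 \to X \times X$. A direct pullback computation identifies this object with the hom-set $X(a,b)$; since $X$ is a groupoid, every such arrow is invertible, and equivalently may be described as a natural isomorphism between the functors classifying $a$ and $b$. This gives exactly the claimed interpretation.

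The only substantive work lies in justifying that the factorisation of the diagonal produced by the path object structure really does yield $X^{\I} \to X \times X$ as the identity type (rather than some other mediating object), and I would expect this to follow straightforwardly once the construction in Section~\ref{typecatstruct} is in hand, since $M$ was defined on groupoids precisely as $X \mapsto X^{\I}$ and the source/target maps used to define $j_{\pi_X}$ are $(s_X, t_X)$. The pullback identification is then a routine verification using the pointwise nature of limits of groupoids.
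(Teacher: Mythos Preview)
Your proposal is correct and matches the paper's approach: the paper states this Corollary without proof, treating it as an immediate consequence of the preceding Proposition together with Theorem~\ref{secondmain}, and your elaboration of why the identity type on a closed type $X$ is realised by $(s_X,t_X)\colon X^{\I}\to X\times X$ is accurate.

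One small refinement: the Corollary speaks of \emph{terms of closed type}, not \emph{closed terms}, so one should also allow terms $a,b\colon\Gamma\to X$ in an arbitrary context groupoid $\Gamma$. A propositional equality is then a section over $\Gamma$ of the pullback of $X^{\I}\to X\times X$ along $(a,b)$, and unwinding this pullback pointwise gives precisely a natural isomorphism $a\Rightarrow b$ of functors $\Gamma\to X$. Your argument for $\Gamma=1$ is the special case and extends without change.
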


\subsection{Chain complexes}
For our next example we consider chain complexes over a ring. The model of type
theory this induces is also constructed in~\cite{Warren2008Homotopy}.
\begin{Defn}
Let $R$ be a ring. A \emph{chain complex} over $R$ is given by a collection $A
= (A_n \, | \, n \in \mathbb Z)$ of left $R$-modules, together with maps
$\partial_n \colon A_n \to A_{n-1}$ satisfying $\partial_n \partial_{n+1} = 0$
for all $n \in \mathbb Z$. Given chain complexes $A$ and $B$, a \emph{chain
map} $A \to B$ is a family of maps $(f_n \colon A_n \to B_n \,  | \, n \in
\mathbb Z)$ such that $\partial_n \circ f_n = f_{n-1} \circ
\partial_n$ for all $n \in \mathbb Z$.
\end{Defn}
\begin{Prop}
The category of chain complexes over a ring $R$ bears the structure of a path
object category.
\end{Prop}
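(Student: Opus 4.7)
The plan is to take $MA$ to be the chain complex with $MA_n = A_n \oplus A_n \oplus A_{n+1}$ and differential $d(x,y,h) = (\partial x,\, \partial y,\, x - y - \partial h)$; one should read $(x,y,h)$ as encoding a chain homotopy $h$ from $x$ to $y$. The internal category structure is then essentially forced: the source and target $s_A, t_A \colon MA \to A$ are the projections onto the first two summands; the identity $r_A$ sends $x$ to $(x,x,0)$; the composition $m_A$ sends $\bigl((x,y,h),(y,z,k)\bigr)$ to $(x,z,h+k)$; and the involution $\tau_A$ sends $(x,y,h)$ to $(y,x,-h)$. Each of these is $R$-linear and commutes with the differential, and the internal-category-with-involution axioms reduce to immediate componentwise identities. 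Because $MA$ is built from $A$ using only biproducts and a degree shift --- both of which preserve limits --- the functor $M$ automatically preserves pullbacks.

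Next I would equip $M$ with its strength. Binary products in $\cat{Ch}(R)$ are biproducts, so I would set
\[
    \alpha_{A,B} \colon MA \oplus B \to M(A \oplus B), \qquad \bigl((x,y,h),b\bigr) \mapsto \bigl((x,b),(y,b),(h,0)\bigr),
\]
pairing a chain homotopy in $A$ with the ``constant datum'' $b$ in $B$. Naturality of $\alpha$ and strong naturality of $s,t,r,m,\tau$ with respect to it reduce to componentwise checks. Finally, I would define $\eta_A \colon MA \to MMA$ by
\[
    \eta_A(x,y,h) = \bigl((x,y,h),\; (y,y,0),\; (h,0,0)\bigr),
\]
which encodes the contraction of the path $(x,y,h)$ onto its right endpoint $y$; a short calculation with the iterated differential on $MMA$ confirms that this is a chain map.

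It remains to verify the five equations of Axiom~3. Four of them --- $s_{MA}\eta_A = \id$, $t_{MA}\eta_A = r_A t_A$, $Ms_A \eta_A = \id$, and $\eta_A r_A = r_{MA} r_A$ --- read off directly from the formulas for $\eta_A$ and for $s,t,r$. The one that requires genuine thought is $Mt_A \eta_A = \alpha_{1,A} \circ (M!, t_A)$: here one uses that the terminal object of $\cat{Ch}(R)$ is the zero complex, so $M1 = 0$ and, by naturality, $\alpha_{1,A}$ collapses to the map $A \to MA$, $b \mapsto (b,b,0) = r_A(b)$. Both sides then reduce to $r_A \circ t_A$. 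I do not anticipate any serious obstacle; the one subtle point is the sign bookkeeping in $d_{MMA}$, which is precisely what pins down the third component $(h,0,0)$ of $\eta_A(x,y,h)$ rather than some other admissible extension.
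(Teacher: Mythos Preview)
Your proposal is correct and is essentially the paper's own proof: the only difference is cosmetic, in that you order the summands of $(MA)_n$ as (source, target, homotopy) rather than the paper's (source, homotopy, target), and all your formulas for $s,t,r,m,\tau,\alpha,\eta$ translate into theirs under this reordering. Your justification that $M$ preserves pullbacks (via biproducts and shift, which are limit-preserving since limits of chain complexes are computed degreewise) is slightly more explicit than the paper's, which simply observes that $M$ does so degreewise.
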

\begin{proof}
For a chain complex $A$, we define the path object $MA$ by
\begin{eqnarray*}
(MA)_n & = & A_n \oplus A_{n+1} \oplus A_n , \\
\partial_n(a, f, b) & = & (\partial_n a,\, b - a - \partial_{n+1} f,\, \partial_n b).
\end{eqnarray*}
The source and target maps $s_A, t_A \colon MA \to A$ are given by first and
third projection respectively, whilst $r_A \colon A \to MA$ is given by $r_A(a)
= (a, 0, a)$. In order to define the composition map $m_A \colon MA \times_A MA
\to MA$, note that we have $(MA \times_A MA)_n \cong A_n \oplus A_{n+1} \oplus
A_n \oplus A_{n+1} \oplus A_n$, so that we may take $m_A(a, f, b, g, c) = (a, f
+ g, c)$. Finally, the involution $\tau_A \colon MA \to MA$ is given by
$\tau_A(a, f, b) = (b, -f, a)$. It is easy to see that all the maps defined so
far are chain maps and give $A$ the structure of an internal groupoid. Note
that $M$ preserves pullbacks, as it does so degreewise; in fact, it also
preserves the terminal object $0$, and hence all finite limits. We give a
strength $\alpha_{A,X} \colon MA \oplus X \to M(A \oplus X)$ by $
\alpha_{A,X}\big((a, f, a'), x\big) = \big((a, x), (f,0), (a', x)\big)$.
Finally, to give the maps $\eta_A \colon MA \rightarrow MMA$, we observe
that since
\[ (MMA)_n  =  (A_n \oplus A_{n+1} \oplus A_n) \oplus (A_{n+1} \oplus A_{n+2} \oplus A_{n+1}) \oplus  (A_n \oplus A_{n+1} \oplus A_n)\ \text, \]
we may take $\eta_A(a, f, b) = \big((a, f, b), (f, 0, 0), (b, 0, b)\big)$. The
remaining verifications are entirely straightforward.
\end{proof}
Recall that if $f,g \colon A \to B$ are chain maps, then a \emph{chain
homotopy} $\gamma \colon f \Rightarrow g$ is a collection of $R$-module maps
$\gamma_n \colon A_n \to B_{n+1}$ satisfying $\partial_{n+1} \gamma_n +
\gamma_{n-1} \partial_n = g_n - f_n$ for all $n \in \mathbb Z$. Observing that
chain maps $c \colon A \to MB$ are in bijection with chain homotopies $s_A.c
\Rightarrow t_A.c$, we conclude that:
\begin{Cor}
The category of chain complexes over a ring $R$ may be equipped with the
structure of a categorical model of identity types. In this model propositional
equalities between terms of closed type are interpreted by chain homotopies.
\end{Cor}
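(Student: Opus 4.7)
The first assertion will require no work beyond the preceding Proposition and Theorem~\ref{secondmain}: since the category of chain complexes over $R$ has just been shown to be a path object category, Theorem~\ref{secondmain} immediately endows it with the structure of a categorical model of identity types.

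For the interpretational claim, the plan is to trace through the construction of identity types underlying Theorem~\ref{secondmain}. A closed type is interpreted as a cloven $\R$-map $A \to 0$, and the identity type $\Id_A$ is to be obtained by factoring the diagonal $\delta_A \colon A \to A \oplus A$ through the path object structure. Over the terminal context there is no ambient substitution to worry about, and the factorization should reduce to the canonical one
\[ A \xrightarrow{r_A} MA \xrightarrow{\langle s_A, t_A \rangle} A \oplus A \]
exhibited in the preceding Proposition. Given parallel chain maps $a, b \colon X \to A$ interpreting two terms of type $A$, an element of $\Id_A(a,b)$ will then be a chain map $c \colon X \to MA$ satisfying $s_A \circ c = a$ and $t_A \circ c = b$; and by the bijection recalled immediately before the Corollary, such a $c$ is precisely a chain homotopy $\gamma \colon a \Rightarrow b$, which gives the desired interpretation.

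The one step I expect to require genuine care is the verification that the factorization produced by the general construction of Section~\ref{typecatstruct} agrees with the canonical one $A \to MA \to A \oplus A$ when the base is the terminal object. However, since in the slice over the terminal object the path object structure must reduce to the original one, this should be essentially tautological, and the entire proof of the corollary should reduce to citing the preceding Proposition, Theorem~\ref{secondmain}, and the recalled bijection between maps into $MA$ and chain homotopies.
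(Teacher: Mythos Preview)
Your proposal is correct and follows the same route as the paper: the first claim is immediate from the preceding Proposition and Theorem~\ref{secondmain}, and the interpretational claim follows from the bijection, recorded just before the Corollary, between chain maps $X \to MA$ and chain homotopies between their projections under $s_A$ and $t_A$. The paper leaves the Corollary essentially unproved, relying on that one sentence of observation; your additional remark that $M_\Gamma(x)$ reduces to $MA$ over the terminal object (since here $M$ preserves the terminal object $0$, so the pullback defining $M_0(x)$ is trivial) is correct and makes explicit what the paper takes for granted.
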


\subsection{Interval object categories}
The preceding two models of identity types were shown
in~\cite{Warren2008Homotopy} to be part of a class of such models that may be
constructed from categories equipped with an \emph{interval object}. In fact,
every such category gives us an example of a path object category, so that the
machinery he describes can be understood as a special case of that developed
here. Let us first recall the salient definitions
from~\cite{Warren2008Homotopy,Warren2009characterization}.

\begin{Defn}
Let $(\E, \otimes, I)$ be a symmetric monoidal closed category.
\begin{itemize}
\item A (strict, invertible) \emph{interval} is a cogroupoid object
\begin{equation*}
    \cd[@C+1em]{
        I \ar@<-6pt>[r]_-{\top} \ar@<6pt>[r]^-{\bot}  & A \ar[l]|-{i} \ar[r]_-{\star} & A +_I A
    }
\end{equation*}
 whose ``object of co-objects'' is the unit of the monoidal
structure.\vskip0.5\baselineskip
\item A \emph{join} on an interval is a morphism $\mathord \vee \colon A
    \otimes A \to A$ making the following diagrams commute:
\begin{equation*}
  \cd{
    A \ar[r]^-{A \otimes \bot} \ar[dr]_{1_A} &
    A \otimes A \ar[d]^{\vee} &
    A \ar[l]_-{\bot \otimes A} \ar[dl]^{1_A} \\ &
    A
  } \qquad
  \cd{
    A \ar[r]^-{A \otimes \top} \ar[d]_{i} &
    A \otimes A \ar[d]^{\vee} &
    A \ar[l]_-{\top \otimes A} \ar[d]^{i} \\
    I \ar[r]_{\top} &
    A &
    I \ar[l]^{\top}
  } \qquad
  \cd{
    A \otimes A \ar[r]^-{\vee} \ar[dr]_{i \otimes i} &
    A \ar[d]^{i} \\ &
    I
  }
\end{equation*}
\item An \emph{interval object category} is a finitely complete symmetric
    monoidal category $\E$ equipped with a strict, invertible interval with
    a join.
\end{itemize}
\end{Defn}
\begin{Ex}
Both the category of small groupoids and the category of chain complexes may be
made into interval object categories. The interval in the category of groupoids
is $1 \rightrightarrows \I$, where $\I$ is the free-living isomorphism
described in Proposition~\ref{gpdmodel} above. In the category of chain
complexes over a ring $R$, the interval is $I \rightrightarrows A$, where the
$I$ is the chain complex which is $R$ in degree zero and is $0$ elsewhere; and
$A$ is the chain complex which is $R$ in degree one, $R \oplus R$ in degree
zero, and $0$ elsewhere;
and whose only non-trivial differential $\delta_1 \colon R \oplus R \to R$ is
given by $\delta_1(x) = (x,-x)$. See~\cite[Examples
1.3]{Warren2009characterization} for the further details.
\end{Ex}
\begin{Prop}
Any interval object category is a path object category.
\end{Prop}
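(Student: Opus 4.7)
The plan is to take $MX \defeq X^A$, the internal hom from the interval $A$ to $X$ provided by the symmetric monoidal closed structure of $\E$. The source, target, and unit maps $s_X, t_X \colon MX \rightrightarrows X$ and $r_X \colon X \to MX$ arise by applying $(\thg)^A$ to the coendpoints $\bot, \top \colon I \to A$ and to the retraction $i \colon A \to I$, using the canonical isomorphism $X^I \cong X$. For the composition $m_X \colon MX \times_X MX \to MX$, we would use the cocomposition $\star \colon A \to A +_I A$, exploiting the isomorphism $X^{A +_I A} \cong X^A \times_{X^I} X^A$ that the right adjoint $(\thg)^A$ provides; the involution $\tau_X$ arises similarly from the coinverse. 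The cogroupoid axioms on $A$ then translate directly into the internal category (indeed, groupoid) axioms for $MX \rightrightarrows X$, while functoriality in $X$ and preservation of pullbacks by $M$ are both automatic from the fact that $(\thg)^A$ is a right adjoint. This establishes Axiom~1.

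For Axiom~2, the plan is to construct the strength $\alpha_{X,Y} \colon MX \times Y \to M(X \times Y)$ as the adjoint transpose of
\[
(MX \times Y) \otimes A \longrightarrow (MX \otimes A) \times (Y \otimes A) \xrightarrow{\ \mathrm{ev} \times (Y \otimes i)\ } X \times (Y \otimes I) \cong X \times Y\text,
\]
where the first arrow is the canonical pairing of the tensored projections and the retraction $i \colon A \to I$ supplies the ``forgetting'' on the second factor. Strong naturality of $s$, $t$, $r$, $m$, and $\tau$ with respect to $\alpha$ should then reduce, after transposing across the adjunction $(\thg) \otimes A \dashv (\thg)^A$, to evident compatibilities among $\bot$, $\top$, $i$, $\star$ and the coinverse with the monoidal structure.

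Finally, to obtain the contraction $\eta \colon M \Rightarrow MM$ required for Axiom~3, we would define $\eta_X \colon X^A \to (X^A)^A$ as the double adjoint transpose of
\[
X^A \otimes A \otimes A \xrightarrow{\ X^A \otimes \vee\ } X^A \otimes A \xrightarrow{\ \mathrm{ev}\ } X\text,
\]
so that, informally, $\eta_X(\phi)$ sends $a, a' \in A$ to $\phi(a \vee a')$. The five equations of Axiom~3 should then follow from the defining axioms of the join, once we observe that in any interval object category $M1 = 1^A \cong 1$, so that the ``constant path'' map $\alpha_{1, X}$ agrees up to canonical isomorphism with $r_X$. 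Specifically, \eqref{seta} and \eqref{Ms} reduce to the unit identities $\vee \circ (\bot \otimes A) = \id_A = \vee \circ (A \otimes \bot)$; \eqref{teta} and \eqref{Mt} reduce to $\vee \circ (\top \otimes A) = \top \circ i$ and its symmetric counterpart; and \eqref{tri2} reduces to $\vee \circ (i \otimes i) = i$. The main obstacle will be keeping careful track of the various adjoint transpositions and the coherences arising from the symmetric monoidal closed structure; but once the internal-language calculus is set up properly, each of the verifications reduces to a one-line application of an axiom for the interval, the join, or the monoidal closed structure.
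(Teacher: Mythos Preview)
Your proposal is correct and follows essentially the same approach as the paper: defining $MX = [A,X]$, obtaining the internal groupoid structure by dualising the cogroupoid structure on $A$, and defining $\eta_X$ via $[\mathord\vee, X]$ (which is exactly your double transpose). Your strength, though phrased via transposition, unwinds to the paper's composite $MX \times Y \xrightarrow{1 \times r_Y} MX \times MY \cong M(X \times Y)$, and your observation that $M1 \cong 1$ is the key simplification that makes equation~\eqref{Mt} tractable. One small slip: the reduction you give for~\eqref{tri2} should read $i \circ \mathord\vee = i \otimes i$ (the third join diagram), not $\mathord\vee \circ (i \otimes i) = i$, which does not type-check; but this is a typo rather than a gap in the argument.
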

\begin{proof}
We define $MX \defeq [A, X]$; and since the functor $[\thg, X] \colon \E^\op
\to \E$ sends colimits to limits, the cogroupoid structure on $I
\rightrightarrows A$ transports to a groupoid structure on $MX = [A,X]
\rightrightarrows [I, X] \cong X$, naturally in $X$. Observe that since $M$ has
a left adjoint, it preserves all limits, and so certainly all pullbacks. The
strength on $M$ is defined by the composite
\begin{equation*}
    MX \times Y \xrightarrow{1 \times r_Y} MX \times MY \xrightarrow{\cong} M(X \times Y)\ \text,
\end{equation*}
where the unnamed isomorphism expresses the product-preservation of $M$.
Finally, the maps $\eta_X \colon MX \to MMX$ are given by
\begin{equation*}
    MX = [A,X] \xrightarrow{[\mathord \vee, X]} [A \otimes A, X] \cong [A, [A, X]] = MMX\ \text.
\end{equation*}
The remaining axioms for a path object category now follow directly from those
for an interval object category.
\end{proof}
\begin{Cor}
Any interval object category may be equipped with the structure of a
categorical model of identity types.
\end{Cor}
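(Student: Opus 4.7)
The plan is to deduce the Corollary immediately by composing two results from the preceding text. The Proposition just stated (and proved) establishes that every interval object category $(\E, \otimes, I, A)$ is a path object category, with path object functor $MX \defeq [A, X]$, and with the internal groupoid, strength, and contraction data obtained by transporting the cogroupoid-with-join structure on $A$ through the internal hom. Theorem~\ref{secondmain} then guarantees that any path object category is, in particular, a categorical model of identity types. Chaining the two produces the desired conclusion in a single line.

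There is accordingly no substantive obstacle here, as all the hard work has already been carried out in those two prior results. The only elaboration worth including, in parallel with the remarks accompanying the earlier corollaries for $\cat{Top}$, groupoids and chain complexes, concerns how propositional equalities are interpreted in the induced model. Since $MY = [A, Y]$, a morphism $X \to MY$ corresponds under the internal-hom adjunction to a morphism $X \otimes A \to Y$. Thus, for terms of a closed type $Y$, a witness $p \colon \Id_Y(a, b)$ is interpreted by a morphism $I \otimes A \cong A \to Y$ which restricts along $\bot, \top \colon I \rightrightarrows A$ to the chosen interpretations of $a$ and $b$; in other words, propositional equalities are interpreted by homotopies in the interval-theoretic sense of $\cite{Warren2009characterization}$. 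This unpacking is immediate and requires no further argument beyond the application of the two previous results.
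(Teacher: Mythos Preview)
Your proposal is correct and matches the paper's approach exactly: the Corollary is stated without proof in the paper, being an immediate consequence of the preceding Proposition (interval object categories are path object categories) together with Theorem~\ref{secondmain}. Your additional remark unpacking the interpretation of propositional equalities as interval-theoretic homotopies is a reasonable elaboration in the spirit of the parallel corollaries, though the paper itself omits it in this instance.
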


\subsection{Simplicial sets}\label{ssetssection}
Our final example of a path object category is the category of simplicial sets.
Exhibiting the necessary structure will be somewhat more involved than for the
preceding examples; on which account we only sketch the construction here,
reserving the detailed combinatorics for Section~\ref{simplicialexample} below.
We begin with some basic definitions.

\begin{Defn}
Let $\Delta$ be the category whose objects are the ordered sets $[n] = \{0,
\dots, n\}$ (for $n \in \mathbb N$) and whose morphisms are order-preserving
maps. A \emph{simplicial set} is a functor $\Delta^\op \to \cat{Set}$. If $X$
is a simplicial set, we write $X_n$ for $X([n])$, and call its elements the
\emph{$n$-simplices} of $X$. The category of simplicial sets, $\cat{SSet}$, is
the presheaf category $[\Delta^\op, \cat{Set}]$.
\end{Defn}
We think of the $0$-, $1$-, $2$- and $3$-simplices of $X$ as being points,
lines, triangles and tetrahedra respectively; more generally, we visualise a
typical simplex $x \in X_n$ as the convex hull of $n+1$ (ordered) points in
general position in $\mathbb R^n$, whose interior is labelled with $x$ and
whose faces are labelled by the simplices obtained by acting on $x$ by
monomorphisms $[k] \to [n]$ of $\Delta$.

\begin{Prop}\label{ssets}
The category of simplicial sets bears the structure of a path object category.
\end{Prop}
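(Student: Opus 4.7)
The plan follows the topological case closely. The naive candidate $MX = X^{\Delta^1}$ fails for the same reason as $X^{[0,1]}$ in topology: composition of $\Delta^1$-paths is associative and unital only up to homotopy, not strictly. We must therefore introduce a simplicial analog of Moore paths of varying ``length'' that can be concatenated strictly end-to-end.

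The combinatorial input, whose detailed treatment is reserved for Section~\ref{simplicialexample}, is a family of simplicial sets $L_w$ indexed by finite words $w$ in the alphabet $\{+,-\}$, where $L_w$ is obtained by gluing copies of $\Delta^1$ along vertices, one copy per letter of $w$, with the orientation of each copy determined by the sign. Thus $L_\emptyset = \Delta^0$ and $L_{vw} = L_v \cup_{\Delta^0} L_w$; each $L_w$ comes with distinguished initial and terminal vertices; and simultaneously reversing $w$ and flipping all its signs produces a word $\bar w$ together with a canonical isomorphism $L_w \cong L_{\bar w}$ that exchanges these two endpoints.

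We then define the $k$-simplices of $MX$ to be pairs $(w, f)$ where $w$ is a word and $f \colon L_w \times \Delta^k \to X$, with face and degeneracy operators acting on the $\Delta^k$ factor. The maps $s_X$ and $t_X$ pick out the initial and terminal endpoints of a path; $r_X$ sends $x$ to the pair $(\emptyset, x)$ corresponding to the constant path of length zero; the composition $m_X$ is given by concatenating words and gluing the corresponding maps out of $L_w$; and $\tau_X$ is implemented by the reversal isomorphisms $L_w \cong L_{\bar w}$. The strength $\alpha_{X,Y}$ sends $((w,f), y)$ to the pair consisting of $w$ and the map $L_w \times \Delta^k \to X \times Y$ obtained by pairing $f$ pointwise with $y$, exactly as in the topological case. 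Functoriality of $M$ in $X$ and its preservation of pullbacks both reduce to the corresponding properties of $\Hom(L_w \times \thg, X)$ for each fixed word $w$.

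The main obstacle is the construction of $\eta \colon M \Rightarrow MM$ and the verification of the equations~\eqref{seta}--\eqref{tri2}. Given a simplicial Moore path $f \colon L_w \to X$, the map $\eta_X(f)$ should be a path in $MX$ indexed by the same word $w$ whose value at the $i$-th vertex of $L_w$ is the sub-path of $f$ running from vertex $i$ to the terminal vertex, and whose higher structure encodes a coherent ``sliding'' of this tail along $f$. To present this as an honest simplicial map requires a careful combinatorial analysis of the products $L_w \times L_w$: one must exhibit a decomposition of such a product with respect to which the required map can be defined piece by piece. Once the combinatorics are in place, the equations~\eqref{seta}--\eqref{tri2} as well as the strength and naturality conditions on $\eta$ reduce to bookkeeping on the underlying words and gluings.
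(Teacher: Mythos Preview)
Your construction of $MX$ as $\coprod_w X^{L_w}$, with simplicial operators acting only on the $\Delta^k$ factor, cannot support the contraction $\eta$. The problem is that in your $MX$ the word $w$ is preserved by every face and degeneracy map, so it is a locally constant invariant: any $1$-simplex of $MX$ has both of its vertices in the same summand $X^{L_w}$. But equation~\eqref{teta} forces $t_{MX}(\eta_X(w,f)) = r_X(t_X(w,f))$, which lies in the component indexed by the empty word, while equation~\eqref{seta} forces $s_{MX}(\eta_X(w,f)) = (w,f)$ itself. For $w \neq \emptyset$ these two $0$-simplices lie in distinct connected components of your $MX$, so no map $L_w \to MX$ can connect them. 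Concretely, your proposed description of $\eta_X(f)$---sending vertex $i$ of $L_w$ to the tail of $f$ from $i$ onward---assigns to adjacent vertices of $L_w$ paths whose words differ in length by one, and there is simply no $1$-simplex of $MX$ joining them.

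The paper's construction avoids this by letting the ``shape'' of a Moore path vary with the simplicial dimension: an $n$-simplex of $MX$ is indexed not by a fixed word but by an $n$-dimensional \emph{traversal} $\theta \colon \{1,\dots,k\} \to [n] \times \{+,-\}$, and a simplicial operator $\alpha \colon [m] \to [n]$ acts on $\theta$ by pulling back along $\alpha \times \id$ (Proposition~\ref{realisemaps}). Under a face map the traversal can shorten, and under a degeneracy it can lengthen---see the example~\eqref{typ1}, where the two $\delta_i$-faces are $0$-dimensional paths of lengths $5$ and $3$. This is exactly what allows a $1$-simplex of $MX$ to join $0$-dimensional Moore paths of different lengths, and hence what makes the diagram~\eqref{etatheta} defining $\eta_X$ go through. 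Your phrase ``a careful combinatorial analysis of the products $L_w \times L_w$'' hides the real difficulty: what is needed is not a decomposition of $L_w \times L_w$ but a genuinely different simplicial structure on $MX$ in which the length is not locally constant.
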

Observing that the category of simplicial sets is cartesian closed, an obvious
candidate for the path object of a simplicial set $X$ would be the internal hom
$X^{\Delta^1}$, where $\Delta^1 \defeq \Delta(\thg,1)$ is the free simplicial
set on a $1$-simplex. However, much as in the topological case, this most
obvious candidate fails to satisfy the axioms required of it; though this time
in a more serious manner. Indeed, for an arbitrary simplicial set $X$, it need
not be the case that two paths $\Delta^1 \to X$ with matching endpoints even
have a composite: the generic counterexample being given by the simplicial set
$\Lambda^2_1$ freely generated by the diagram
\begin{equation}\label{kan1}
\cd[@-1em]{ & 1 \ar[dr]^g \\ 0 \ar[ur]^f & & 2\rlap{ ,}}
\end{equation}
in which the paths $f, g \colon \Delta^1 \to \Lambda^2_1$ are evidently not
composable. We might try and overcome this by  restricting attention to those
simplicial sets $X$ which are \emph{Kan complexes}: a condition which requires,
amongst other things, that any diagram of the shape~\eqref{kan1} in $X$ should
have an extension to a $2$-simplex. But though this would allow us to compose
paths $\Delta^1 \to X$---and more generally, to construct a composition
operation $m_X \colon X^{\Delta^1} \times_X X^{\Delta^1} \to X^{\Delta^1}$---we
would then encounter the same problem as we did in the topological case:
namely, that this composition would only be associative and unital ``up to
higher simplices'', rather than on the nose. For this reason, we shall not
consider Kan complexes any further; instead, taking our inspiration from the
topological case, we intend to define a simplicial analogue of the Moore path
space, which will allow us to equip \emph{any} simplicial set with the
structure of an internal category. Our motivating definition is the following:
\begin{Defn}\label{moore0}
Let $X$ be a simplicial set and let $x, x'$ be $0$-simplices in $X$. A
\emph{simplicial Moore path} from $x$ to $x'$ is given by $0$-simplices $x =
z_0, z_1, \dots, z_k = x'$ and $1$-simplices $f_1, \dots, f_k$ such that for
each $1 \leqslant i \leqslant k$, either $f_i \colon z_{i-1} \to z_i$ or $f_i
\colon z_i \to z_{i-1}$.
\end{Defn}
A typical such Moore path looks like:
\begin{equation}\label{typ0}
\cd{ x \ar[r]^{f_1} & z_1 \ar[r]^{f_2} & z_2 & z_3 \ar[l]_{f_3} \ar[r]^{f_4} & z_4 & x' \ar[l]_{f_5}}
\end{equation}
and our intention is that the totality of such Moore paths in $X$ should
provide the $0$-simplices of the path object $MX$. Note that these Moore paths
are composable---by placing them end to end---and reversible, and that the
composition is associative and unital (with identities given by the empty
path); so that, at the $0$-dimensional level at least, we have the data
required for Axiom~1 of a path object category.

With regard to the higher-dimensional simplices of $MX$, we reserve a formal
description for Definition~\ref{mooren} below: but may at least provide the
following informal picture of what they are. Given $n$-simplices $\xi, \xi' \in
X$, a simplicial Moore path between them will consist of $n$-simplices $\xi =
\zeta_0$, \dots, $\zeta_k = \xi'$ together with $(n+1)$-simplices $\phi_1$,
\dots, $\phi_k$ such that for each $1 \leqslant i \leqslant k$, the simplex
$\phi_i$ mediates between $\zeta_i$ and $\zeta_{i+1}$ in a suitably disciplined
manner. We do not wish to make this precise here, but instead give the
following example of a typical $1$-dimensional Moore path by way of
illustration:
%
%
\begin{equation}\label{typ1}
\cd[@+1em]{
  x \ar[r]^{f_1} \ar[d]_\xi \ar[dr]|{\zeta_1} \ar[drr]|{\zeta_2} &
  z_1 \ar[r]^{f_2} \ar[dr]|{\zeta_3} &
  z_2 \ar[d]|{\zeta_4} &
  z_3 \ar[l]_{f_3} \ar[dl]|{\zeta_5} \ar[d]|{\zeta_6} \ar[r]^{f_4} \ar@{}[dr]|(0.3){\phi_7} &
  z_4 \ar[dl]|{\zeta_7} \ar@{}[d]|(0.3){\phi_8} &
  y\rlap{ .} \ar[l]_{f_5} \ar[dll]^{\xi'} \\
  x' \ar[r]_{f'_1} \ar@{}[ur]|(0.3){\phi_1}&
  z'_1 \ar@{}[u]|(0.25){\phi_2} \ar@{}[u]|(0.75){\phi_3} \ar@{}[ur]|(0.7){\phi_4} &
  z'_2 \ar[l]^{f'_2} \ar[r]_{f'_3} &
  y' \ar@{}[ul]|(0.7){\phi_5} \ar@{}[ul]|(0.3){\phi_6} & {}
}
\end{equation}
We take the set of $n$-simplices of $MX$ to be the $n$-dimensional Moore paths
in $X$; and upon doing so, see that---as in the $0$-dimensional case---such
Moore paths are composable (again by placing them end to end) and reversible,
with the composition being associative and unital. So in this way we obtain the
internal category with involution $MX \rightrightarrows X$ required for Axiom
1. For Axiom 2, we are required to give maps $\alpha_{X,Y} \colon MX \times Y
\to M(X \times Y)$, and we illustrate the construction through examples at the
$0$-\ and $1$-dimensional level. For a Moore path~\eqref{typ0} in $X$ and a
$0$-simplex $y \in Y$, the corresponding path in $X \times Y$ will be
\begin{equation*}
\cd{ (x,y) \ar[r]^{(f_1,1_y)} & (z_1,y) \ar[r]^{(f_2,1_y)} & (z_2,y) & (z_3,y) \ar[l]_{(f_3,1_y)} \ar[r]^{(f_4,1_y)} & (z_4,y) & (x',y) \ar[l]_{(f_5,1_y)}}
\end{equation*}
where the $1$-simplex $1_y \colon y \to y$ is the image of $y$ under the unique
epimorphism $[1] \to [0]$ in $\Delta$. Similarly, given a $1$-dimensional Moore
path like~\eqref{typ1} in $X$ and a $1$-simplex $h \colon y \to y'$ in $Y$, the
corresponding Moore path in $X \times Y$ will have~\eqref{typ1} as its
projection on to $X$; and as its projection on to $Y$, the diagram
\begin{equation*}
\cd[@+1em]{ y \ar[r]^{1_y} \ar[d]_h \ar[dr]|h \ar[drr]|h & y \ar[r]^{1_y} \ar[dr]|h & y \ar[d]|h & y \ar[l]_{1_y} \ar[dl]|h \ar[d]|h \ar[r]^{1_y} & y \ar[dl]|h & y\rlap{ .} \ar[l]_{1_y} \ar[dll]^{h} \\ y' \ar[r]_{1_{y'}} & y' & y' \ar[l]^{1_{y'}} \ar[r]_{1_{y'}} & y'}
\end{equation*}
in which all of the interior $2$-simplices are obtained by acting on $h$ by
suitable epimorphisms $[2] \to [1]$ of $\Delta$.

Finally, we consider Axiom 3, for which we must find maps $\eta_X \colon MX \to
MMX$ contracting a path on to its endpoint. For the purposes of this section,
we illustrate this only with an example at the $0$-dimensional level. Given a
Moore path such as~\eqref{typ1}, we are required to produce a $0$-simplex of
$MMX$: that is, a $0$-dimensional Moore path in $MX$, which, if we draw it down
the page, will be given as follows:
\begin{equation}\label{contractsample}
\cd[@+1em]{
  x \ar[r]^{f_1} \ar[d]_{f_1} &
  z_1 \ar[r]^{f_2} \ar[dl]|{1_{z_1}} \ar[d]|{f_2} &
  z_2 \ar[dl]|{1_{z_2}} &
  z_3 \ar[l]_{f_3} \ar[r]^{f_4} \ar[dll]|{f_3} \ar[dl]|{1_{z_3}} \ar[d]|{f_4} &
  z_4 \ar[dl]|{1_{z_4}} &
  x' \rlap{ .} \ar[l]_{f_5} \ar[dll]|{f_5} \ar[dl]^{1_{x'}} \\
  z_1 \ar[r]|{f_2} \ar[d]_{f_2} &
  z_2 \ar[dl]|{1_{z_2}} &
  z_3 \ar[l]|{f_3} \ar[r]|{f_4} \ar[dll]|{f_3} \ar[dl]|{1_{z_3}} \ar[d]|{f_4} &
  z_4 \ar[dl]|{1_{z_4}} &
  x' \ar[l]|{f_5} \ar[dll]|{f_5} \ar[dl]^{1_{x'}} \\
  z_2 &
  z_3 \ar[l]|{f_3} \ar[r]|{f_4} &
  z_4 &
  x' \ar[l]|{f_5} \\
  z_3 \ar[r]|{f_4} \ar[d]_{f_4} \ar[u]^{f_3} \ar[ur]|{1_{z_3}} \ar[urr]|{f_4}  &
  z_4 \ar[dl]|{1_{z_4}} \ar[ur]|{1_{z_4}} &
  x' \ar[l]|{f_5} \ar[dll]|{f_5} \ar[dl]^{1_{x'}} \ar[u]|{f_5} \ar[ur]_{1_{x'}} \\
  z_4 &
  x' \ar[l]|{f_5} \\
  x' \ar[u]^{f_5} \ar[ur]_{1_{x'}}
}
\end{equation}
Once again, every $2$-simplex in the interior is obtained by acting on the
relevant $1$-simplex by a suitable epimorphism $[2] \to [1]$ in $\Delta$. This
completes our tour of the proof of Proposition~\ref{ssets}; again, we refer the
reader to Section~\ref{simplicialexample} for the details.

\begin{Cor}
The category of simplicial sets may be equipped with the structure of a
categorical model of identity types. In this model propositional equalities
between terms of closed type are interpreted by simplicial Moore homotopies.
\end{Cor}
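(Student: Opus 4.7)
The first sentence of the corollary is an immediate consequence of Proposition~\ref{ssets} combined with Theorem~\ref{secondmain}: the former endows $\cat{SSet}$ with the structure of a path object category, and the latter turns any such into a categorical model of identity types. So the proof reduces to unpacking the model in the closed-term case to verify the characterisation in the second sentence.

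The plan is to trace through the construction of $\Id_A$ in the type category from Propositions~\ref{typecatconstruct} and~\ref{typecathasids}, specialised to a closed type $A$. Such a type is interpreted by a cloven $\R$-map $x \colon X \to 1$, and two terms $a, b : A$ by $0$-simplices $a, b \in X_0$, i.e.\ a morphism $(a,b) \colon 1 \to X \times X$. The identity type $\Id_A$ is the cloven $\R$-map $j_x \colon I(x) \to X \times X$ obtained by factorising the diagonal $X \to X \times X$, and $\Id_A(a,b)$ is interpreted by pulling this back along $(a,b)$. Thus its global elements are morphisms $1 \to I(x)$ lying over $(a,b)$. It therefore suffices to identify $I(x)$ concretely and check that such elements are precisely the simplicial Moore paths from $a$ to $b$ in $X$.

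The key step is therefore to match the abstract factorisation of $\delta_X \colon X \to X \times X$ coming from the path object category machinery with the concrete simplicial Moore path space. I would do this by appealing to the construction to be given in Section~\ref{typecatstruct}, which factorises the diagonal using the path object $MX$ built in Proposition~\ref{ssets}; once one identifies $I(x)$ (for $x$ the unique map to $1$) with $MX$, and $j_x$ with $(s_X, t_X) \colon MX \to X \times X$, the global elements $1 \to I(x)$ over $(a,b)$ are exactly the $0$-simplices of $MX$ whose source is $a$ and target is $b$. By Definition~\ref{moore0}, these are the simplicial Moore paths from $a$ to $b$, as claimed.

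The only real work, then, is the identification of $I(x)$ with $MX$ in the closed case; this is forced by the explicit path object category construction of diagonal factorisations given in Section~\ref{typecatstruct} and the fact that $X \times_1 X = X \times X$. All the genuine content of the corollary is hidden in Proposition~\ref{ssets}, whose full combinatorial verification is deferred to Section~\ref{simplicialexample}; that verification is the main obstacle, but not one that needs to be redone here.
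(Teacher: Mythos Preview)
Your proposal is correct and matches the paper's treatment: the corollary is stated there without proof, as an immediate consequence of Proposition~\ref{ssets} and Theorem~\ref{secondmain}, and your unpacking of the closed case via $M_1(x) \cong MX$ is exactly the intended reading. One small refinement: the paper writes ``simplicial Moore \emph{homotopies}'' rather than ``paths'', which (in light of Definition~\ref{2catstructure}) suggests terms $a,b : A$ in an arbitrary context $\Gamma$ rather than just the empty one; your argument extends verbatim to that case, with a propositional equality interpreted by a map $\Gamma \to MX$ lying over $(a,b)$.
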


\section{Homotopy-theoretic models from path object categories}\label{typecatstruct}
\looseness=-1 The purpose of this section is to prove Theorem~\ref{secondmain}:
that every path object category is a homotopy-theoretic model of identity
types. We begin by  showing that every path object category can be equipped
with the structure of a cloven w.f.s.; then we show that this cloven w.f.s.\
has a stable functorial choice of diagonal factorisations; and finally, we show
that it is functorially Frobenius.

\subsection{Cloven w.f.s.'s from path object categories}\label{sec:clovenwfs}
In this section we show that any path object category may be equipped with the
structure of a cloven w.f.s. Before doing so, we develop some notation which
will allow us to phrase our arguments in a more intuitive language.

\begin{Defn}\label{2catstructure}
Let $\E$ be a path object category. Given maps $f, g \colon X \to Y$ in $\E$,
we define a \emph{homotopy} \mbox{$\theta \colon f \Rightarrow g$} to be a
morphism $X \to MY$ which upon postcomposition with $s_X$ and $t_X$ yields $f$
and $g$ respectively. It is easy to see that the morphisms and homotopies $X
\to Y$ form a category $\underline \E(X,Y)$, whose identity and composition are
induced pointwise by the internal category structure on $MY \rightrightarrows
Y$.
%
%
%
Moreover, we have a ``whiskering'' of homotopies by morphisms: which is to say
that given a diagram
\begin{equation*}
    \cd{W \ar[r]^f & X \ar@/^12pt/[r]^g \ar@/_12pt/[r]_h \dtwocell{r}{\theta} & Y \ar[r]^k & Z\rlap{ ,}}
\end{equation*}
we have a $2$-cell $\theta.f \colon gf \Rightarrow hf$ obtained by precomposing
the corresponding map $X \to MY$ with $f$, and a $2$-cell $k.\theta \colon kg
\Rightarrow kh$ obtained by postcomposing it with $Mk$. It is easy to check
that these whiskering operations are functorial in $\theta$, and that we have
the coherence equations
\begin{equation*}
    \theta.(f.f') = (\theta.f).f'\text, \quad     \theta.1_X = \theta\text, \quad
    (k'.k).\theta = k'.(k.\theta)\text, \quad     1_Y.\theta = \theta\ \text.
\end{equation*}
Let us note also that the maps $\tau_Y$ induce a further operation on
homotopies: given $\theta \colon f \Rightarrow g \colon X \to Y$, we write
$\theta^{\mathord\circ} \colon g \Rightarrow f$ for the composite of the
corresponding map $X \to MY$ with $\tau_Y$. This reversal operation is
functorial in the obvious way.
\end{Defn}
\begin{Rk}
The above definitions determine on $\E$ a structure which is almost that of a
$2$-category, but not quite. The problem is that, given a diagram of morphisms
and homotopies
\begin{equation*}
    \cd{X \ar@/^12pt/[r]^f \ar@/_12pt/[r]_g \dtwocell{r}{\theta} & Y \ar@/^12pt/[r]^h \ar@/_12pt/[r]_k \dtwocell{r}{\phi} & Z}
\end{equation*}
in $\E$, there are two ways of composing it together---namely, as the
composites $(\phi g).(h\theta)$ and $(k \theta).(\phi f)$---and there is no
reason to expect these to agree, as they would have to in a $2$-category. The
structure we obtain is rather that of a \emph{sesquicategory} in the sense
of~\cite{Street1996Categorical}.
\end{Rk}
We now show that any path object category can be equipped with a cloven w.f.s.\
whose cloven $\ELL$-maps are ``strong deformation retracts'' in the following
sense:

\begin{Defn}\label{homotopydefn}
Let $\E$ be a path object category. By a \emph{strong deformation retraction}
for a map $f \colon X \to Y$ in $\E$, we mean a retraction $k \colon Y \to X$
for $f$ (so $kf = \id_X$) together with a homotopy $\theta \colon \id_Y
\Rightarrow fk$ which is trivial on $X$, in the sense that $\theta . f = 1_f
\colon f \Rightarrow f \colon X \to Y$. A \emph{strong deformation retract} is
a map $f$ equipped with a strong deformation retraction.
\end{Defn}

%

\begin{Prop}\label{axcloven}
Any path object category $\E$ may be equipped with the structure of a cloven
w.f.s.\ whose cloven $\ELL$-maps are the strong deformation retracts.
\end{Prop}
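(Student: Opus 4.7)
The plan is to factorise each morphism through a ``generalised mapping path space''. For $f \colon X \to Y$, form the pullback
\begin{equation*}
\xymatrix@+0.5em{ Pf \ar[r]^{p_2} \ar[d]_{p_1} & MY \ar[d]^{t_Y} \\ X \ar[r]_f & Y}
\end{equation*}
and set $\lambda_f \defeq (1_X, r_Y f) \colon X \to Pf$ and $\rho_f \defeq s_Y p_2 \colon Pf \to Y$; the equality $\rho_f \lambda_f = f$ follows from $s_Y r_Y = 1_Y$. Functoriality in $f$: for a commutative square $(h,k) \colon f \to g$ of the form~(\ref{cs}), the map $P(h,k) \defeq (h p_1, Mk \, p_2) \colon Pf \to Pg$ factors through the defining pullback for $Pg$ by naturality of $t$, manifestly renders~(\ref{diagfiller}) commutative, and is functorial in $(h,k)$ by functoriality of $M$.

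I would then identify cloven $\ELL$-map structures on $f$ with strong deformation retractions. By the universal property of $Pf$, a morphism $s \colon Y \to Pf$ corresponds to a pair $(k,\theta)$ with $k \colon Y \to X$ and $\theta \colon Y \to MY$ satisfying $t_Y\theta = fk$. Under this correspondence, $\rho_f s = 1_Y$ becomes $s_Y\theta = 1_Y$---making $\theta$ a homotopy $1_Y \Rightarrow fk$ in the sense of Definition~\ref{2catstructure}---while $sf = \lambda_f$ decomposes into $kf = 1_X$ and $\theta.f = 1_f$. This is precisely the data of Definition~\ref{homotopydefn}. Together with Corollary~\ref{underlying}, this bijection gives the underlying w.f.s.\ and the claimed description of its left class.

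The fillers $\sigma_f$ and $\pi_f$ use $\eta_Y$ and $m_Y$ respectively. For $\pi_f \colon P\rho_f \to Pf$, send an element $((x,\phi),\psi)$ of $P\rho_f = Pf \times_Y MY$ (which satisfies $t_Y\psi = s_Y\phi$) to $(x, m_Y\langle \phi, \psi\rangle) \in Pf$; the equation $\pi_f \lambda_{\rho_f} = 1_{Pf}$ is then unitality of $m_Y$, and $\rho_f \pi_f = \rho_{\rho_f}$ is the source compatibility $s_Y m_Y = s_Y p_2$. For $\sigma_f \colon Pf \to P\lambda_f$, exploit the pullback preservation of $M$, so that $M(Pf) \cong MX \times_{MY} MMY$, and send $(x,\phi) \in Pf$ to $(x, \Xi)$, where $\Xi$ has first coordinate $\alpha_{1,X}(M!\phi, x)$---a ``constant path at $x$ of the same length as $\phi$'', from Axiom~2---and second coordinate $\eta_Y\phi$. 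Pullback compatibility of $\Xi$ over $MY$ is exactly equation~(\ref{Mt}); the equations $s_{Pf}\Xi = (x,\phi)$ and $t_{Pf}\Xi = \lambda_f(x)$ use~(\ref{seta}), (\ref{Ms}), (\ref{teta}) and the strength-compatibility of $s$ and $t$; and $\sigma_f \lambda_f = \lambda_{\lambda_f}$ reduces, via~(\ref{tri2}) and the strength-naturality of $r$, to the componentwise description of $r_{Pf}$. The main difficulty throughout is bookkeeping: each equation is a routine consequence of a single axiom of a path object category applied in one coordinate of a pullback, but one must be scrupulous about which axiom and which coordinate are in play at each step.
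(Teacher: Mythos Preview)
Your proposal is correct and follows essentially the same approach as the paper: the same mapping-path-space factorisation, the same identification of cloven $\ELL$-maps with strong deformation retracts, the same use of $m_Y$ for $\pi_f$ and of $(\alpha_{1,X}(M!\phi,x),\,\eta_Y\phi)$ for $\sigma_f$. The only cosmetic difference is that the paper constructs the strong deformation retraction $(e_f,\theta_f)$ for $\lambda_f$ first and then passes to $\sigma_f$ via the bijection you established, whereas you write down $\sigma_f$ directly; the underlying data and the axiom-by-axiom verifications are identical.
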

\begin{proof}
First, given a map $f \colon X \to Y$, we must define a factorisation $f =
\rho_f.\lambda_f$ as in~\eqref{factorisation}. So we define $Pf$ by a pullback
\begin{equation*}
   \cd{ Pf \ar[r]^{d_f} \ar[d]_{e_f} &
     MY \ar[d]^{t_Y} \\
     X \ar[r]_{f} & Y\rlap{\ \text,}
   }
\end{equation*}
define $\rho_f \defeq s_Y . d_f$,
and obtain $\lambda_f$ as the map induced by the universal property of pullback
in the following diagram:
\begin{equation*}\label{lamdef}
   \cd{
     X \ar@/^6pt/[drr]^{r_Y . f} \ar@/_6pt/[ddr]_{1_X} \ar@{.>}[dr]|{\lambda_f}
\\
     & Pf \ar[r]^{d_f} \ar[d]^{e_f} &
     MY \ar[d]^{t_Y} \\
     & X \ar[r]_{f} & Y\rlap{\ \text.}
   }
\end{equation*}
Now we have that
$\rho_f . \lambda_f = s_Y . d_f . \lambda_f = s_Y . r_Y . f = f$, so that this
yields a factorisation of $f$
as required. Next, given a square of the form~\eqref{cs}, we induce a morphism
$P(h, k) \colon Pf \to Pg$ by the universal property of pullback in the diagram
\begin{equation*}\label{Pdef}
   \cd[@!C]{
     Pf \ar[r]^{d_f} \ar[d]_{e_f} \ar@{.>}[dr]|{P(h, k)} & MV \ar[dr]^{Mk} \\
     U \ar[dr]_{h} & Pg \ar[r]_{d_g} \ar[d]_{e_g} &
     MY \ar[d]^{t_Y} \\
     & X \ar[r]_{g} & Y\rlap{\ \text.}
   }
\end{equation*}
From the commutativity of this diagram, together with naturality of $r$ and
$t$, we easily deduce the equalities $P(h, k).\lambda_f = \lambda_g . h$ and
$\rho_g . P(h, k) = k.\rho_f$. It is moreover clear that the assignation $(h,k)
\mapsto P(h,k)$ is functorial. Before continuing, let us observe that to give a
cloven $\ELL$-map $(f \colon X \to Y, s \colon Y \to Pf)$ is equally well to
give maps $f \colon X \to Y$, $k \colon Y \to X$ and $\theta \colon Y \to MY$
making the following diagrams commute:
\begin{equation*}
   \cd{ Y \ar[r]^{\theta} \ar[d]_{k} &
     MY \ar[d]^{t_Y} \\
     X \ar[r]_{f} & Y
   }\quad\text,\quad
   \cd{ Y \ar[r]^{\theta} \ar[dr]_{\id} &
     MY \ar[d]^{s_Y} \\ &
      Y
   }\quad\text,\quad
   \cd{ X \ar[r]^{f} \ar[dr]_{\id} &
     Y \ar[d]^{k} \\ &
      X
   }\quad\text,\quad
   \cd{ X \ar[r]^{f} \ar[d]_{f} &
     Y \ar[d]^{r_Y} \\ Y \ar[r]_{\theta} &
      MY
   }\ \text,
\end{equation*}
and this is precisely to give a strong deformation retract in the sense
described above. It remains only to give choices of fillers $\sigma_f$ and
$\pi_f$ as in~\eqref{sigpi}. To give $\sigma_f$ is, by the above, equally well
to give a strong deformation retraction for $\lambda_f \colon X \to Pf$. We
have $e_f \colon Pf \to X$ satisfying $e_f . \lambda_f = \id_X$, and so it
remains to give a homotopy $\theta_f \colon \id_{Pf} \Rightarrow \lambda_f .
e_f$ which is constant on $X$. So consider the following diagram:
\begin{equation}\label{theta}
   \cd[@!C]{
     Pf \ar[r]^{d_f} \ar[d]_{(M!. d_f, e_f)} \ar@{.>}[dr]|{\theta_f} & MY
\ar[dr]^{\eta_Y} \\
     M1 \times X \ar[dr]_{\alpha_{1,X}} & MPf \ar[r]_{Md_f} \ar[d]_{Me_f} &
     MMY \ar[d]^{Mt_Y} \\
     & MX \ar[r]_{Mf} & MY\ \text.
   }
\end{equation}
The inner square is a pullback, as $M$ is pullback-preserving, and we calculate
that $Mt_Y . \eta_Y . d_f = \alpha_{1, Y} . (M!, t_Y) . d_f = \alpha_{1, Y} .
(M! . d_f, f . e_f) = Mf . \alpha_{1, X} . (M! . d_f, e_f)$ so that the outer
edge commutes. Thus we induce a map $\theta_f$ as indicated; and it is now an
easy calculation to show that this map is a homotopy $\id_{Pf} \Rightarrow
\lambda_f . e_f$ that is trivial on $X$.
This completes the proof that $\lambda_f$ has a strong deformation retraction,
and hence the construction of $\sigma_f$. We now turn to $\pi_f$. Let us
consider the diagram
\begin{equation*}\label{jdef}
   \cd[@+0.4em@C-0.4em]{
     P\rho_f \ar@/^8pt/[drr]^{d_{\rho_f}} \ar@/_6pt/[ddr]_{d_f . e_{\rho_f}}
\ar@{.>}[dr]|{j_f} \\
     & MY \times_Y MY \ar[r]_-{\pi_2} \ar[d]^{\pi_1} &
     MY \ar[d]^{t_Y} \\
     & MY \ar[r]_{s_Y} & Y\rlap{\ \text.}
   }
\end{equation*}
We have that $t_Y . d_{\rho_f} = \rho_f . e_{\rho_f} = d_f . s_Y . e_{\rho_f}$
so that the outside of this diagram commutes, and hence we induce a unique
filler $j_f$ as indicated. We now consider the diagram
\begin{equation*}\label{pidef}
   \cd{
     P\rho_f \ar[r]^-{j_f} \ar[d]_{e_{\rho_f}} \ar@{.>}[dr]|{\pi_f} & MY
\times_Y MY \ar[dr]^{m_Y} \\
     Pf \ar[dr]_{e_f} & Pf \ar[r]_-{d_f} \ar[d]^{e_f} &
     MY \ar[d]^{t_Y} \\
     & X \ar[r]_{f} & Y\rlap{\ \text.}
   }
\end{equation*}
We calculate that $t_Y . m_Y . j_f = t_Y . \pi_1 . j_f = t_Y . d_f . e_{\rho_f}
= f . e_f . e_{\rho_f}$, so that
 the outside of this diagram commutes;
and so we induce a unique filler $\pi_f$ as indicated. We must now show that
this $\pi_f$ renders commutative both triangles in the right-hand square
of~\eqref{sigpi}. For the lower-right triangle, we have that $\rho_f . \pi_f =
s_Y . d_f . \pi_f = s_Y . m_Y . j_f = s_Y . \pi_2 . j_f = s_Y . d_{\rho_f} =
\rho_{\rho_f}$ as required. For the upper-left triangle, it suffices to show
that $\pi_f . \lambda_{\rho_f} = 1_{Pf}$ holds upon postcomposition with $e_f$
and $d_f$. In the former case, we have that $e_f . \pi_f . \lambda_{\rho_f} =
e_f . e_{\rho_f} . \lambda_{\rho_f} = e_f$; whilst in the latter, we calculate
that $ d_f . \pi_f . \lambda_{\rho_f} = m_Y . j_f . \lambda_{\rho_f} = m_Y .
(d_f.e_{\rho_f}, d_{\rho_f}).\lambda_{\rho_f} = m_Y . (d_f, r_Y . \rho_f) =
m_Y. (1_{MY},r_Y.s_Y).d_f = d_f $ as required. This completes the construction
of $\pi_f$.%
\end{proof}

We may also give an explicit characterisation of the cloven $\R$-maps of the
cloven w.f.s.\ constructed in Theorem~\ref{axcloven}, since by the Yoneda
lemma, to equip a map $f \colon X \to Y$ with a cloven $\R$-map structure $p
\colon Pf \to X$ is equally well to give a natural family of functions $\E(V,p)
\colon \E(V, Pf) \to \E(V,X)$ rendering commutative all diagrams of the form
\begin{equation}\label{commutepathlift}
    \cd[@+0.5em]{
        \E(V,X) \ar[d]_{\E(V,\lambda_f)} \ar[r]^-{\id} & \E(V,X) \ar[d]^{\E(V,f)} \\
        \E(V,Pf) \ar[ur]_{\E(V,p)} \ar[r]_-{\E(V,\rho_f)} & \E(V,Y)\rlap{ .}
    }
\end{equation}
Now, to give a morphism $V \to Pf$ is, by definition of $Pf$, equally well to
give morphisms $\phi \colon V \to MY$ and $x \colon V \to X$ satisfying $f.x =
t_Y.\phi$;
which in the notation of Definition~\ref{homotopydefn}, is equally well to give
morphisms $x \colon V \to X$ and $y \colon V \to Y$ together with a homotopy
$\phi \colon y \Rightarrow fx$. So to give the function $\E(V,p)$ is to assign
to every such collection of data a morphism which we might suggestively write
as $\phi^\ast(x) \colon V \to X$; to ask for commutativity of the two triangles
in~\eqref{commutepathlift} is to ask, firstly, that $f.\phi^\ast(x) = y$, and
secondly, that when $\phi$ is an identity homotopy $fx \Rightarrow fx$, we have
$\phi^\ast(x) = x$; whilst to ask for naturality in $V$ is to ask that for
every $g \colon W \to V$, we have $\phi^\ast(x)g = (\phi g)^\ast(xg)$.

This resembles a path-lifting property: it says that if we have a
$V$-parameterised path in $Y$, together with a lifting of its target to $X$, we
can also lift its source to $X$. What it does not say, a priori, is that we can
lift the entire path $\phi$ to $X$. But in fact, we can derive this by making
use of the path contractions obtained from $\eta$. Indeed, we have a map $\ell
\colon Pf \to MX$ given by the composite $Mp.\theta_f$,
where $\theta_f$ is given as in~\eqref{theta}. Straightforward calculation now
shows that $s_X . \ell = p$, that $t_X . \ell = e_f$, and that $Mf . \ell =
d_f$; which says that if we are given a morphism $V \to Pf$, corresponding as
before to a homotopy $\phi \colon y \to fx$, then postcomposition with $\ell$
yields a homotopy $\bar \phi \colon \phi^\ast(x) \Rightarrow x \colon V \to X$
such that $f.\bar \phi = \phi$. We record the content of this discussion as:

\begin{Prop}\label{pathlift}
In the cloven w.f.s.\ of Theorem~\ref{axcloven}, cloven $\R$-map structures on
$f \colon X \to Y$ are in bijective correspondence with operations which, to
every morphism $x \colon V \to X$ and homotopy $\phi \colon y \Rightarrow fx
\colon V \to Y$, assign a homotopy $\bar \phi \colon \phi^\ast(x) \Rightarrow x
\colon V \to X$ such that $f.\bar \phi = \phi$, such that $\bar \phi$ is the
identity homotopy whenever $\phi$ is, and such that for any map $g \colon W \to
V$, we have $(\phi g)^\ast(xg) = \phi^\ast(x)g$ and $\overline{\phi g} = \bar
\phi. g$.
\end{Prop}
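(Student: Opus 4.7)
The approach is to reformulate both sides of the claimed bijection via the Yoneda lemma. Since $Pf$ is defined as the pullback of $f$ along $t_Y$, morphisms $V \to Pf$ correspond to pairs $(x,\phi)$ with $x \colon V \to X$ and a homotopy $\phi \colon y \Rightarrow fx$; hence by Yoneda, a morphism $p \colon Pf \to X$ is equivalent to a natural assignment $(x,\phi) \mapsto \phi^\ast(x) \defeq p \cdot (x,\phi)$. Under this translation, the two cloven $\R$-map equations $p\lambda_f = \id_X$ and $fp = \rho_f$ become respectively the conditions that $\phi^\ast(x) = x$ whenever $\phi$ is an identity, and that $f \phi^\ast(x) = y$. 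The content of the proposition is then to upgrade the source-lift $\phi^\ast(x)$ to a full path-lift $\bar\phi \colon \phi^\ast(x) \Rightarrow x$ using the path contractions $\eta$.

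For the forward direction, given $p$, set $\ell \defeq Mp \cdot \theta_f \colon Pf \to MX$, with $\theta_f$ as in~\eqref{theta}, and define $\bar\phi \defeq \ell \cdot (x,\phi)$. A direct calculation using the defining pullback of $\theta_f$, the axioms~\eqref{seta}, \eqref{teta}, \eqref{Ms}, the naturality of $s$ and $t$, and the cloven $\R$-map identity $fp = \rho_f$, establishes that $s_X \ell = p$, $t_X \ell = e_f$, and $Mf \cdot \ell = d_f$. Postcomposition with $(x,\phi)$ then gives the source, target, and lifting conditions on $\bar\phi$. The identity-on-identity condition reduces to $\ell\lambda_f = r_X$, which follows from the triviality of $\theta_f$ on $X$ (namely $\theta_f \lambda_f = r_{Pf} \lambda_f$) combined with $p\lambda_f = \id_X$; naturality in $g$ is automatic from the composite definition of $\bar\phi$.

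Conversely, given such an operation, apply it to the universal pair $(e_f, d_f) \colon Pf \to Pf$ to extract $L \defeq \overline{d_f} \colon Pf \to MX$. The naturality condition $\overline{\phi g} = \bar\phi \cdot g$ then ensures that $\bar\phi = L \cdot (x,\phi)$ for any $(x,\phi)$, and we define $p \defeq s_X \cdot L$. The axiom $p\lambda_f = \id_X$ follows by applying naturality with $g = \lambda_f$: since $d_f \lambda_f = r_Y \cdot f$ is an identity homotopy, the identity-on-identity condition forces $L\lambda_f = r_X$, and hence $p\lambda_f = s_X \cdot r_X = \id_X$. The equation $fp = \rho_f$ follows from $Mf \cdot L = d_f$ together with naturality of $s$.

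The main obstacle is verifying mutual inverseness. The round-trip $p \mapsto \ell \mapsto s_X \cdot \ell = p$ is immediate from~\eqref{seta} and naturality of $s$. The harder direction requires showing that any $L \colon Pf \to MX$ satisfying the four identities $s_X L = p$, $t_X L = e_f$, $Mf \cdot L = d_f$, and $L\lambda_f = r_X$ must coincide with $M(s_X L) \cdot \theta_f$; I would attack this by exploiting the universal property of the pullback~\eqref{theta} defining $\theta_f$, together with the naturality of the strength $\alpha$ and the contraction $\eta$, and the right unitality of the internal category structure $MX \rightrightarrows X$.
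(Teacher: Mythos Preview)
Your forward direction is exactly what the paper does: define $\ell = Mp \cdot \theta_f$, verify $s_X\ell = p$, $t_X\ell = e_f$, $Mf\cdot\ell = d_f$, and conclude the required properties of $\bar\phi$. This is in fact \emph{all} the paper proves: the discussion preceding the proposition establishes the Yoneda bijection between cloven $\R$-map structures $p$ and source-lift operations $(x,\phi)\mapsto\phi^\ast(x)$, and then shows that each such canonically enhances to a path-lift $\bar\phi$ via $\ell$. The proposition is then stated as ``recording the content of this discussion''; the phrase ``bijective correspondence'' is really pointing at the Yoneda bijection with source-lifts, not asserting a literal bijection with the richer path-lift data.

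You have over-read the statement and are attempting to prove that the assignment $p \mapsto (\bar\phi\text{-operation})$ is bijective. Your easy round-trip $p \mapsto \ell \mapsto s_X\ell = p$ is fine, but the ``harder direction'' you flag---showing that any $L \colon Pf \to MX$ satisfying $s_XL=p$, $t_XL=e_f$, $Mf\cdot L=d_f$, $L\lambda_f=r_X$ must equal $Mp\cdot\theta_f$---is not just incomplete but generally \emph{false}. Already in the groupoid model, if $f$ has a fibre containing a non-identity automorphism in the kernel of $f$, one can twist a given lift $\bar\phi \colon \phi^\ast(x) \to x$ by such an automorphism to obtain a different lift with the same source, target, and image under $f$; doing this coherently gives a different $L$ with the same $p$. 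So distinct path-lift operations may yield the same cloven $\R$-map structure, and the map you are trying to invert is not injective. The pullback property of~\eqref{theta} and right unitality will not rescue this: they pin down $\theta_f$, but they do not force an arbitrary $L$ to factor through $\theta_f$. For the paper's purposes only the forward construction is ever used (in Lemmas~\ref{precart} and~\ref{functorialityprop}), so nothing is lost; but you should not claim the strict bijection with $\bar\phi$-operations.
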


\subsection{Factorising the diagonal}\label{facdiag} To show that the cloven
w.f.s.\ associated to a path object category underlies a homotopy-theoretic
model of identity types, there remain two tasks: firstly, to show that it has a
stable functorial choice of diagonal factorisations, and secondly, to show that
it is functorially Frobenius. In this section we establish the first of these.
The intuition behind our construction is that it should be enough to establish
the existence of the required diagonal factorisations in the case of a cloven
$\R$-map $X \to 1$---that is, a ``closed type''---as we can then obtain it for
an arbitrary $\R$-map $X \to \Gamma$---a ``dependent type''---by regarding it
as a ``closed type'' in the slice category $\E / \Gamma$. This then ensures the
required pullback-stability of the factorisations we construct.

Now, in order for this intuition to make sense, we must know that the notion of
path object structure is an indexed one: which is to say that such a structure
on $\E$ induces a corresponding one on every slice $\E / \Gamma$ in a
pullback-stable manner. This is a consequence of a result of Robert Par\'e
(see~\cite[Proposition 3.3]{Johnstone1997Cartesian}), which states that any
pullback-preserving, strong endofunctor of a finitely complete $\E$ extends to
an $\E$-indexed endofunctor; and that any strong natural transformation between
two such extends to an $\E$-indexed natural transformation. This applies in
particular to the endofunctor $M$ and the natural transformations $s, t, r, m,
\tau$ and $\eta$ of our axiomatic framework, so that the notion of path object
category is indeed an indexed one. For the sake of a self-contained
presentation, we now reproduce such details of Par\'e's result as are necessary
in what follows.
\begin{Prop}\label{subcat}
Suppose given a map $x \colon X \to \Gamma$ in a path object category, and
consider the following pullback diagram:
\begin{equation*}
    \cd{
        M_{\Gamma}(x) \ar[d]_{u_x} \ar[r]^-{j_x} &
        MX \ar[d]^{Mx} \\
        M1 \times \Gamma \ar[r]_-{\alpha_{1,\Gamma}} &
        M\Gamma\rlap{\ .}
    }
\end{equation*}
If we define $s_x$ and $t_x \colon M_\Gamma(x) \to X$ as the respective
composites $s_X.j_x$ and $t_X.j_x$, then the subgraph $(s_x,t_x) \colon
M_\Gamma(x) \rightrightarrows X$ of $(s_X,t_X) \colon MX \rightrightarrows X$
is a subcategory.
\end{Prop}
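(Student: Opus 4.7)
To verify that the subgraph $M_\Gamma(x) \rightrightarrows X$ is a subcategory of $MX \rightrightarrows X$, I must exhibit an identity map $r_x \colon X \to M_\Gamma(x)$ with $j_x \circ r_x = r_X$ and a composition $m_x \colon M_\Gamma(x) \,{}_{s_x}\!\!\times_{t_x} M_\Gamma(x) \to M_\Gamma(x)$ lifting $m_X$; the source/target laws will then follow by postcomposing with $j_x$ and using the corresponding laws for $MX$.

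For the identity, by the universal property of the defining pullback it suffices to give compatible maps $X \to MX$ and $X \to M1 \times \Gamma$. I take $r_X$ and $(r_1 \circ !_X,\, x)$ respectively, where $r_1 \colon 1 \to M1$ denotes the identity of the internal category $M1 \rightrightarrows 1$ guaranteed by Axiom 1. Naturality of $r$ gives $Mx \circ r_X = r_\Gamma \circ x$, while strength of $r$ (as a strong natural transformation $\mathrm{Id} \Rightarrow M$) gives $\alpha_{1,\Gamma} \circ (r_1 \circ !_X,\, x) = r_\Gamma \circ x$. These agree in $M\Gamma$, inducing $r_x$, and by construction $j_x \circ r_x = r_X$, so $s_x \circ r_x = 1_X = t_x \circ r_x$.

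For the composition, write $u_x = (\ell_x, \gamma_x)$. Since $s_x = s_X \circ j_x$ and $t_x = t_X \circ j_x$, the pair $(j_x \circ \pi_1,\, j_x \circ \pi_2)$ factors through $MX \,{}_{s_X}\!\!\times_{t_X} MX$, so $m_X \circ (j_x \circ \pi_1, j_x \circ \pi_2)$ is a well-defined map into $MX$. Strength of $s$ yields $s_\Gamma \circ \alpha_{1,\Gamma} = \pi_2$, whence $\gamma_x = s_\Gamma \circ Mx \circ j_x = x \circ s_x$, and similarly $\gamma_x = x \circ t_x$; hence on the composable pullback $\gamma_x \circ \pi_1 = \gamma_x \circ \pi_2$, so I may take as second leg of the candidate $m_x$ the map $(m_1 \circ (\ell_x \circ \pi_1, \ell_x \circ \pi_2),\, \gamma_x \circ \pi_1)$ into $M1 \times \Gamma$.

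The crux is agreement of these two legs after composition into $M\Gamma$. Naturality of $m$ converts $Mx \circ m_X \circ (j_x \circ \pi_1, j_x \circ \pi_2)$ to $m_\Gamma \circ (\alpha_{1,\Gamma} \circ u_x \circ \pi_1,\, \alpha_{1,\Gamma} \circ u_x \circ \pi_2)$, and strength of $m$ as a strong natural transformation asserts the identity
\begin{equation*}
    m_\Gamma \circ (\alpha_{1,\Gamma} \times_\Gamma \alpha_{1,\Gamma}) \;=\; \alpha_{1,\Gamma} \circ (m_1 \times 1_\Gamma)
\end{equation*}
on pairs of constant paths sharing a basepoint, which says precisely that the composite of two constant paths at $\gamma \in \Gamma$ of lengths $\ell_1,\ell_2$ is the constant path at $\gamma$ of length $m_1(\ell_1, \ell_2)$. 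Combining these, both legs evaluate on our data to $\alpha_{1,\Gamma} \circ (m_1 \circ (\ell_x \circ \pi_1, \ell_x \circ \pi_2),\, \gamma_x \circ \pi_1)$, and so $m_x$ is induced as required. I expect this strength-of-$m$ identity to be the only nontrivial point; everything else is a diagram chase through the pullback universal property together with naturality of the structure maps.
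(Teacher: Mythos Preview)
Your argument is correct and uses the same two ingredients as the paper: the strength of $r$ to see that identities are constant paths, and the strength of $m$ to see that composites of constant paths are constant. The only difference is organisational: the paper first observes that the defining square is a pullback of internal graphs, thereby reducing to the special case of showing that $M1 \times \Gamma \rightrightarrows \Gamma$ is a subcategory of $M\Gamma \rightrightarrows \Gamma$, whereas you work directly with $M_\Gamma(x)$ and verify the two legs agree in $M\Gamma$ by hand. Your direct approach is slightly more explicit (you actually name the induced maps $r_x$ and $m_x$), while the paper's reduction is conceptually cleaner and makes it transparent that nothing about $x$ itself matters---only the fact that pullbacks of internal categories along graph morphisms are internal categories. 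Either way, the identity you single out as the crux, $m_\Gamma \circ (\alpha_{1,\Gamma} \times_\Gamma \alpha_{1,\Gamma}) = \alpha_{1,\Gamma} \circ (m_1 \times 1_\Gamma)$, is exactly the displayed square the paper invokes. One small point you leave implicit: associativity and unitality of $m_x$ and $r_x$ follow automatically because $j_x$ is a (split) monomorphism, being a pullback of the split mono $\alpha_{1,\Gamma}$.
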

\begin{proof}
Informally, $M_\Gamma(x)$ is the subobject of $MX$ consisting of all those
paths in $X$ which are sent by $x$ to a constant path in $\Gamma$; and thus we
need to prove that identity paths are constant, and that constant paths are
closed under composition. To make this formal, we observe that the square
\begin{equation*}
    \cd[@C+3em]{
        M_\Gamma(x) \rightrightarrows X \ar[r]^-{(j_x, 1_X)} \ar[d]_{(u_x, x)} &
MX \rightrightarrows X \ar[d]^{(Mx, x)} \\
        M1 \times \Gamma \rightrightarrows \Gamma \ar[r]_-{(\alpha_{1,\Gamma},
1_\Gamma)} & M\Gamma \rightrightarrows \Gamma\ \text.
    }
\end{equation*}
is a pullback in the category of graphs internal to $\E$. So to prove the
statement of the Proposition,  it suffices to show that the subgraph $(\pi_2,
\pi_2) \colon M1 \times \Gamma \rightrightarrows \Gamma$ of $(s_\Gamma,
t_\Gamma) \colon M\Gamma \rightrightarrows \Gamma$ is a subcategory. For this,
we must show two things: firstly, that $r_\Gamma \colon \Gamma \to M\Gamma$
factors through $\alpha_{1,\Gamma} \colon M1 \times \Gamma \to M\Gamma$---which
is immediate by the strength of $r$, since we have $r_\Gamma =
\alpha_{1,\Gamma} . (r_1 \times \Gamma)$---and secondly,
that the composite
\begin{equation*}
    (M1 \times \Gamma) \times_\Gamma (M1 \times \Gamma)
\xrightarrow{\alpha_{1,\Gamma} \times_\Gamma \alpha_{1,\Gamma}} M\Gamma
\times_\Gamma M\Gamma \xrightarrow{m_\Gamma} M\Gamma
\end{equation*}
factors through $\alpha_{1,\Gamma} \colon M1 \times \Gamma \to M\Gamma$. But we
observe that the domain of this composite is isomorphic to $M1 \times M1 \times
\Gamma$, and that upon composition with this isomorphism, the map displayed
above becomes
\begin{equation*}
    M1 \times M1 \times \Gamma \xrightarrow{(\alpha_{1,\Gamma}.(\pi_1, \pi_3),
\alpha_{1,\Gamma}.(\pi_2, \pi_3))} M\Gamma \times_\Gamma M\Gamma
\xrightarrow{m_\Gamma} M\Gamma\ \text.
\end{equation*}
But this map factors through $\alpha_{1,\Gamma}$ since, by the strength of $m$,
the following diagram commutes:
\begin{equation*}
  \cd[@C+10em]{
    M1 \times M1 \times \Gamma \ar[r]^{(\alpha_{1,\Gamma}.(\pi_1, \pi_3),
\alpha_{1,\Gamma}.(\pi_2, \pi_3))} \ar[d]_{m_1 \times \Gamma} &
    M\Gamma \times_\Gamma M\Gamma \ar[d]^{m_\Gamma} \\
    M1 \times \Gamma \ar[r]_{\alpha_{1,\Gamma}} & M\Gamma\rlap{\ \text.}
  }
\end{equation*}
Hence $M1 \times \Gamma \rightrightarrows \Gamma$ is a subcategory of $M\Gamma
\rightrightarrows \Gamma$, and so $M_\Gamma(x) \rightrightarrows X$ is a
subcategory of $MX \rightrightarrows X$ as desired.
\end{proof}
Given a map $x \colon X \to \Gamma$ of our path object category $\E$, we will
denote the structure maps of the internal category $(s_x,t_x) \colon
M_\Gamma(x) \rightrightarrows X$ by $r_x \colon X \to M_\Gamma(x)$ and $m_x
\colon M_\Gamma(x) \times_X M_\Gamma(x) \to M_\Gamma(x)$; observe that we have
$j_x . r_x = r_X$ and $j_x . m_x = m_X . (j_x \times_X j_x)$. With this in
hand, we are now ready to prove:
\begin{Prop}\label{choiceofdiagonalfacs}
In a path object category $\E$, the assignation sending a cloven $\R$-map
$(x,p) \colon X \to \Gamma$ to the factorisation
\begin{equation*}
    X \xrightarrow{r_x} M_\Gamma(x) \xrightarrow{(s_x,t_x)} X \times_\Gamma X
\end{equation*}
yields a choice of diagonal factorisations in the sense of
Definition~\ref{stablepath}(i).
\end{Prop}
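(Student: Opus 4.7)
Three items of data must be supplied: verification that the displayed factorisation composes to the diagonal; a cloven $\ELL$-map structure on $r_x$; and a cloven $\R$-map structure on $(s_x,t_x)$. The first is immediate from the unit laws $s_x r_x = t_x r_x = 1_X$ of the internal category $M_\Gamma(x) \rightrightarrows X$ established in Proposition~\ref{subcat}; these yield $(s_x,t_x) \circ r_x = \delta_x$.

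For the cloven $\ELL$-structure on $r_x$, I would imitate the construction of $\sigma_f$ from the proof of Proposition~\ref{axcloven}. As was shown there, giving a cloven $\ELL$-structure is equivalent to producing a strong deformation retraction in the sense of Definition~\ref{homotopydefn}. I take the retraction to be $s_x \colon M_\Gamma(x) \to X$, for which $s_x r_x = 1_X$ is the left unit axiom; and I construct the required homotopy $\theta \colon M_\Gamma(x) \to MM_\Gamma(x)$ (trivial on $X$, with source $1$ and target $r_x s_x$) by showing that the composite $\eta_X \cdot j_x \colon M_\Gamma(x) \to MMX$ factors uniquely through $Mj_x \colon MM_\Gamma(x) \hookrightarrow MMX$. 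This factorisation is established by a pullback chase: since $M$ preserves pullbacks, $MM_\Gamma(x)$ is the pullback of $M\alpha_{1,\Gamma}$ along $MMx$, and naturality of $\eta$ together with the strength identities of Axioms~2--3 shows that $MMx \cdot \eta_X \cdot j_x$ factors through $M\alpha_{1,\Gamma}$. The required equations $s_{M_\Gamma(x)} \theta = 1$, $t_{M_\Gamma(x)} \theta = r_x s_x$ and $\theta r_x = r_{M_\Gamma(x)} r_x$ then follow from the Axiom~3 identities for $\eta_X$ by pullback reflection through $Mj_x$.

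For the cloven $\R$-structure on $(s_x, t_x)$, I apply Proposition~\ref{pathlift}: the task reduces to specifying, naturally in $V$, for each $\xi \colon V \to M_\Gamma(x)$ and each homotopy $\phi = (\phi_1, \phi_2) \colon (a, b) \Rightarrow (s_x\xi, t_x\xi)$ into $X \times_\Gamma X$, a lift $\bar \phi \colon \bar\xi \Rightarrow \xi$ in $M_\Gamma(x)$ whose image under $M(s_x, t_x)$ is $\phi$. The lift is built by gluing, via the composition $m_{MX}$ in the path-of-paths object $MMX$, three ingredients: an $\eta_X$-contraction of $\phi_1$, an identity $2$-path on $j_x \xi$, and a suitably $\tau$-reversed $\eta_X$-contraction of $\phi_2$. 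The resulting source path $\bar \xi$ is a composite of $\phi_1$, $j_x \xi$ and $\tau_X \phi_2$ in $MX$. The given cloven $\R$-structure $p$ on $x$ itself enters the construction to handle the residual $\Gamma$-directional data.

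\emph{Main obstacle.} The genuine difficulty is to show that $\bar\phi$ constructed above factors through $MM_\Gamma(x) \hookrightarrow MMX$; equivalently, that its $MMx$-image is a path in $M\Gamma$ lying entirely in the constant-path subobject $M1 \times \Gamma$. Since the common $Mx$-projection $\gamma := x\phi_1 = x\phi_2$ of $\phi_1$ and $\phi_2$ is in general \emph{not} a constant path, any na\"ive cancellation of $\gamma$ against $\tau_\Gamma \gamma$ (for instance in the composite $m_\Gamma(\gamma, m_\Gamma(x j_x \xi, \tau_\Gamma \gamma))$) fails on the nose, being valid only up to higher homotopy. To force the factorisation through $M_\Gamma(x)$ on the nose, one must carefully exploit both the left and right unit axioms of the internal category $M\Gamma \rightrightarrows \Gamma$. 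The use of right unitality at this single step is exactly the indispensable appeal anticipated in Remark~\ref{allowableweakening}, and is the one point in the entire development where the weaker Axiom~1$''$ would be insufficient.
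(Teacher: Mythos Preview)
Your treatment of the $\ELL$-structure on $r_x$ is essentially the paper's, modulo one slip: the retraction should be $t_x$, not $s_x$. Indeed, your own construction via $\eta_X \cdot j_x$ produces a homotopy whose target is $r_x t_x$ (since $t_{MX}\eta_X = r_X t_X$), not $r_x s_x$. With that correction, this part matches the paper exactly.

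The $\R$-structure on $(s_x,t_x)$ is where your proposal diverges from the paper, and here there is a genuine gap. You aim to build the full lifted homotopy $\bar\phi$ inside $MMX$ by concatenating $\eta$-contractions of $\phi_1$, an identity $2$-path on $j_x\xi$, and a reversed $\eta$-contraction of $\phi_2$; the resulting source path is then $m_X(\phi_1,\, j_x\xi,\, \tau_X\phi_2)$ in $MX$. You correctly identify the obstacle: the $Mx$-image of this source is $m_\Gamma(\gamma,\, \text{const}_c,\, \tau_\Gamma\gamma)$, which is a genuine loop at $c'$ and is \emph{not} a constant path unless $\gamma$ already is. No appeal to the unit axioms of $M\Gamma \rightrightarrows \Gamma$ will collapse this loop to a constant path on the nose; at best it is constant up to higher homotopy. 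Your closing remark that ``the cloven $\R$-structure $p$ on $x$ enters to handle residual $\Gamma$-directional data'' gestures at the missing ingredient, but does not supply it.

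The paper's argument takes a different route that avoids this obstacle entirely. Rather than lifting the whole homotopy, it constructs only the lifted \emph{source} $\chi' \colon a' \Rightarrow b'$ directly, as a composite of three homotopies each of which is \emph{individually} constant over $c'$. The three pieces are produced by two auxiliary lemmas that make essential use of the $\R$-structure $p$ on $x$: a ``cartesianness'' lemma associating to any $\xi \colon y \Rightarrow z$ in $X$ a path $\tilde\xi \colon y \Rightarrow (x\xi)^\ast(z)$ constant over $xy$; and a ``functoriality'' lemma transporting a fibrewise-constant path $\xi$ along a base path $\psi$ to obtain $\psi^\ast(\xi)$ constant over the new fibre. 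One then sets $\chi' = (\widetilde{\phi_2})^\circ \cdot \psi^\ast(\chi) \cdot \widetilde{\phi_1}$. The single use of right unitality (the point of Remark~\ref{allowableweakening}) arises not in forcing constancy over $\Gamma$, but in checking that when $\phi_1,\phi_2$ are identities the composite $1_b \cdot \chi \cdot 1_a$ reduces to $\chi$. So the conceptual role you assign to right unitality is not quite the one it actually plays.
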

\begin{proof}
We must show that each $r_x$ may be made into a cloven $\ELL$-map and each
$(s_x,t_x)$ into a cloven $\R$-map. Now, to make $r_x$ into a cloven $\ELL$-map
is equally well to equip it with a strong deformation retraction in the sense
of Definition~\ref{homotopydefn}. A suitable retraction for $r_x$ is given by
$t_x \colon M_\Gamma(x) \to X$, since $t_x.r_x = t_X.j_x.r_x = t_X.r_X =
\id_X$; so we now need a homotopy $\id_{M_\Gamma(x)} \Rightarrow r_x.t_x$ which
is constant on $X$. To construct this, consider the following diagram:
\begin{equation*}
   \cd[@!C]{
     M_\Gamma(x) \ar[r]^{j_x} \ar[d]_{u_x} \ar@{.>}[dr]^{\varphi} & MX \ar[dr]^{\eta_X} \\
     M1 \times \Gamma \ar[dr]_{\alpha_{M1,\Gamma}.(\eta_1 \times \Gamma)\ \ \ } & MM_\Gamma(x) \ar[r]_{Mj_x} \ar[d]_{Mu_x} &
     MMX \ar[d]^{MMx} \\
     & M(M1 \times \Gamma) \ar[r]_-{M\alpha_{1,\Gamma}} & MM\Gamma\ \text.
   }
\end{equation*}
Its outside commutes by naturality and strength of $\eta$; and since the
bottom-right square is a pullback, we induce a unique morphism $\varphi \colon
M_\Gamma(x) \to MM_\Gamma(x)$ as indicated. Now an easy calculation shows that
this $\varphi$ gives rise to a homotopy $\id_{M_\Gamma(x)} \Rightarrow r_x.t_x$
which is trivial on $X$,
so that $r_x$ is a cloven $\ELL$-map as required. We must now show that
$(s_x,t_x) \colon M_\Gamma(x) \to X \times_\Gamma X$ is a cloven $\R$-map; for
which it suffices, by the discussion preceding Proposition~\ref{pathlift}, to
define an operation which to every $V$-indexed homotopy in $X \times_\Gamma X$
and every lifting of its target to $M_\Gamma(x)$, associates a lift of its
source to $M_\Gamma(x)$. Now a $V$-indexed homotopy in $X \times_\Gamma X$ is a
pair of homotopies $\phi_1 \colon a' \Rightarrow a \colon V \to X$ and $\phi_2
\colon b' \Rightarrow b \colon V \to X$ such that $x.\phi_1 = x. \phi_2$; let
us write this common composite as $\psi \colon c' \Rightarrow c \colon V \to
\Gamma$. The target of $(\phi_1, \phi_2)$ is the pair $(a, b)$; and to give a
lifting of this to $M_\Gamma(x)$ is to give a homotopy $\chi \colon a
\Rightarrow b \colon V \to X$ such that $x.\chi$ is a \emph{constant} homotopy
$c \Rightarrow c$; that is, one for which the corresponding map $V \to M\Gamma$
factors through $M1 \times \Gamma$. From these data we are required to
determine a homotopy $\chi' \colon a' \Rightarrow b' \colon V \to X$ for which
$x.\chi'$ is a homotopy constant at $c'$. We may represent this
diagrammatically as follows:
\begin{equation}\label{diagrammatic}
\cd[@!@-2em]{
  a' \ar@2[rr]^{\phi_1} \ar@{.>}[dd]_{\chi'} & &
  a \ar@2[dd]^{\chi} \\ & & & & X \ar@{-->}[ddd]^x \\
  b' \ar@2[rr]_{\phi_2} & &
  b \\ & & & & \\
  c' \ar@2[rr]_{\psi} & &
  c & & \Gamma\rlap{ .}
 }
\end{equation}
We will construct $\chi'$ as the composite of three homotopies constant over
$c'$ which we obtain by means of the following two lemmas. The first describes
a ``cartesianness'' property of the path-liftings obtained from a cloven
$\R$-map.
\begin{Lemma}\label{precart}
Let $(x,p) \colon X \to \Gamma$ be a cloven $\R$-map. Then to each $V \in \E$
and homotopy $\xi \colon y \Rightarrow z \colon V \to X$ we may associate a
homotopy $\tilde \xi \colon y \Rightarrow (x.\xi)^\ast(z)$ constant over $xy$,
such that for all $f \colon W \to V$ we have $\widetilde{\xi f} = \tilde \xi.
f$, and such that when $\xi$ is an identity homotopy, so is $\tilde \xi$.
\end{Lemma}
The second describes a ``functoriality'' property of these same path-liftings.
\begin{Lemma}\label{functorialityprop}
Let $(x,p) \colon X \to \Gamma$ be a cloven $\R$-map. Then to each $V \in \E$,
each homotopy $\psi \colon c' \Rightarrow c \colon V \to \Gamma$, and each
homotopy $\xi \colon y \Rightarrow z \colon V \to X$ constant over $c$, we may
associate a homotopy $\psi^\ast(\xi) \colon \psi^\ast(y) \Rightarrow
\psi^\ast(z) \colon V \to X$ constant over $c'$, in such a way that for all $f
\colon W \to V$ we have $(\psi f)^\ast(\xi f) = (\psi^\ast(\xi)).f$ and such
that when $\psi$ is an identity homotopy, $\psi^\ast(\xi) = \xi$.
\end{Lemma}
If we leave aside the proof of these two lemmas for a moment, then we may
construct $\chi'$ as the composite homotopy
\begin{equation*}
    a' \xRightarrow{\widetilde{\phi_1}} \psi^\ast(a) \xRightarrow{\psi^\ast(\chi)} \psi^\ast(b) \xRightarrow{(\widetilde{\phi_2})^\circ} b'\rlap{ .}
\end{equation*}
Observe that this is a composite of homotopies constant over $c'$, and hence by
Proposition~\ref{subcat}, is itself a homotopy constant over $c'$. In order to
conclude that this assignation yields a cloven $\R$-map structure on
$(s_x,t_x)$, we must verify two things. Firstly, that the operation $\chi
\mapsto \chi'$ is natural in $V$---which follows immediately from the
corresponding naturalities noted in Lemmas~\ref{precart}
and~\ref{functorialityprop}---and secondly, that when $\phi_1$ and $\phi_2$ are
identity homotopies, we have $\chi' = \chi$. But in this situation the
displayed composite reduces, again by Lemmas~\ref{precart}
and~\ref{functorialityprop}, to the composite
\begin{equation*}
    a \xRightarrow{1_a} a \xRightarrow{\chi} b \xRightarrow{1_b} b\rlap{ ,}
\end{equation*}
which by left and right unitality of composition, is equal to $\chi$. Note
that, as anticipated in Remark~\ref{allowableweakening}, this is the only point
in the whole argument where we make use of the right unitality of composition.
\end{proof} It remains to give
the proof of the above two Lemmas.
\begin{proof}[of Lemma~\ref{precart}]
By the Yoneda lemma, to give the indicated assignation is equally well to give
a morphism $\delta \colon MX \to M_\Gamma(x)$ such that:
\begin{equation}\label{theeqs}
s_x . \delta = s_X\text, \qquad
t_x . \delta = p . \xi\text, \qquad \text{and} \qquad
\delta . r_X = r_x\text,
\end{equation}
where $\xi \colon MX \to Px$ is the morphism induced by the universal property
of pullback in the following diagram:
\begin{equation*}
   \cd{
     MX \ar@/^6pt/[drr]^{Mx} \ar@/_6pt/[ddr]_{t_X} \ar@{.>}[dr]|{\xi} \\
     & Px \ar[r]^{d_x} \ar[d]^{e_x} &
     M\Gamma \ar[d]^{t_\Gamma} \\
     & X \ar[r]_{x} & \Gamma\rlap{\ \text.}
   }
\end{equation*}
Now, postcomposition with $p.\xi \colon MX \to X$ takes a homotopy $\psi \colon
z \Rightarrow w \colon V \to X$ and sends it to $(x.\psi)^\ast(w) \colon V \to
X$. In particular, when $\psi$ is the identity homotopy on $z$, applying
$p.\xi$ gives back $z$ again. Therefore we can construct the required homotopy
$z \Rightarrow (f\psi)^\ast(w)$ by first forming the following ``path of
paths''
\begin{equation*}
    \cd[@!@+0.5em]{
    z \ar[r] \ar[d]_{r_X(z)} \rtwocell{dr}{} &
    z \ar[d]^{\psi} \\
    z \ar[r] & w
    }
\end{equation*}
in $MMX$ using $\eta$ and the involution $\tau$, and then applying $M(p.\xi)$
to it. Formally, we consider the following composite:
\begin{equation*}
    MX \xrightarrow{\tau_X} MX \xrightarrow{\eta_X} MMX \xrightarrow{\tau\tau_X}
MMX \xrightarrow{M\xi} MPx \xrightarrow{Mp} MX\ \text.
\end{equation*}
We claim that this factors through the subobject $M_\Gamma(x)$ of $MX$, so that
we may define $\delta$ to be the factorising map. To prove the claim, we must
show that $Mx.\delta$ factors through $\alpha_{1,\Gamma}$, for which we
calculate that
\begin{align*}
    Mx . \delta &
     = M\rho_x . M \xi . {\tau\tau}_X . \eta_X . \tau_X
     = Ms_\Gamma . MMx . {\tau\tau}_X . \eta_X . \tau_X\\&
     = Mx . Ms_X . {\tau\tau}_X . \eta_X . \tau_X
     = Mx . \tau_X . Mt_X . \eta_X . \tau_X\\&
     = Mx . \tau_X . \alpha_{1,X} . (M!, t_X) . \tau_X
     = Mx . \alpha_{1,X} . (\tau_1 . M!, t_X).\tau_X\\&
     = \alpha_{1,\Gamma} . (\tau_1 . M!, x . t_X).\tau_X
\end{align*}
as required. It is now straightforward to check that this $\delta$ satisfies
the equations in~\eqref{theeqs}.
\end{proof}
\begin{proof}[of Lemma~\ref{functorialityprop}]
Recall that, given a homotopy $\psi \colon c' \Rightarrow c \colon V \to
\Gamma$ and a homotopy $\xi \colon y \Rightarrow z \colon V \to X$ constant
over $c$, we are required to produce a homotopy $\psi^\ast(\xi) \colon
 \psi^\ast(y) \to \psi^\ast(z)$ constant over $c'$. We
shall do so by first defining a homotopy $\theta \colon u \Rightarrow v \colon
V \to Px$ which we schematically depict as:
\begin{equation}\label{schematic}
u = \left(\cd{
   c \ar@2[d]_{\psi} \\ 
   xy 
   }\right)
\cd{
   {} \ar@2[r]^{\text{constant}} & {} \\
   {} \ar@2[r]_-{x\xi} & {}
} \left(\cd{
   c \ar@2[d]^{\psi} \\
   xz
 }\right) = v\rlap{ .}
\end{equation}
Thus $u$ will correspond to the map $V \to Px$ picking out the homotopy $\psi$
together with the lifting $y$ of its endpoint, whilst $v$ will correspond to
the map picking out $\psi$ together with the lifting $z$ of its endpoint. It
follows that postcomposing $\theta$ with $p \colon Px \to X$ yields the desired
homotopy $\psi^\ast(\xi) \colon \psi^\ast(y) \Rightarrow \psi^\ast(z) \colon V
\to X$.

Now, to give the homotopy $\theta$ is to give a map $V \to MPx$ which---since
$M$ preserves pullbacks---is equally well to give maps $\gamma$ and $\delta$
rendering commutative the square
\begin{equation}\label{tocommute}
\cd{
    V \ar[r]^-{\delta} \ar[d]_\gamma & MM\Gamma \ar[d]^{Mt_\Gamma} \\
    MX \ar[r]_-{Mx} & M\Gamma\rlap{ .}
}
\end{equation}
To define $\gamma$ and $\delta$, let us first note that to give the homotopies
$\psi$ and $\xi$ is equally well to give maps---which by abuse of notation, we
also denote by $\psi$ and~$\xi$---rendering the following square commutative:
\begin{equation*}
  \cd{
    V \ar[r]^-\psi \ar[d]_\xi &
    M\Gamma \ar[d]^{t_\Gamma} \\
    M_\Gamma(x) \ar[r]_-{\pi_2.u_x} & \Gamma\rlap{ .}
  }
\end{equation*}
Now by referring to our schematic depiction~\eqref{schematic}, we see that we
should take $\gamma = j_x.\xi$, and take $\delta$ to be the composite
\begin{equation*}
    V \xrightarrow{(\pi_1.u_x.\xi,\,\psi)} M1 \times M\Gamma \xrightarrow{\alpha_{1,M\Gamma}} MM\Gamma
\end{equation*}
picking out the homotopy in $M\Gamma$ ``constant at $\psi$ and of the same
length as $\xi$''. We now calculate that $Mt_\Gamma.\delta =
Mt_\Gamma.\alpha_{1,M\Gamma}.(\pi_1.u_x.\xi,\psi) =
\alpha_{1,\Gamma}.(\pi_1.u_x.\xi,t_\Gamma.\psi) =
\alpha_{1,\Gamma}.(\pi_1.u_x.\xi,\pi_2.u_x.\xi) = \alpha_{1,\Gamma}.u_x.\xi =
Mx.j_x.\xi = Mx.\gamma$, so that~\eqref{tocommute} commutes; and thus we have
an induced morphism $(\gamma,\delta) \colon V \to MPx$ defining the desired
homotopy ${\theta \colon u \Rightarrow v \colon V \to Px}$. Observe that $u$
and $v$ are the morphisms $V \to Px$ induced by the universal property of
pullback in the respective diagrams
\begin{equation*}
\cd{
    V \ar[r]^-{s_{M\Gamma}.\delta} \ar[d]_{s_X.\gamma} & M\Gamma \ar[d]^{t_\Gamma} \\
    X \ar[r]_-{x} & \Gamma
} \qquad \text{and} \qquad
\cd{
    V \ar[r]^-{t_{M\Gamma}.\delta} \ar[d]_{t_X.\gamma} & M\Gamma \ar[d]^{t_\Gamma} \\
    X \ar[r]_-{x} & \Gamma
}\ \text;
\end{equation*}
and we calculate that $s_{M\Gamma}.\delta =
s_{M\Gamma}.\alpha_{1,M\Gamma}.(\pi_1.u_x.\xi,\psi) =
\pi_2.(\pi_1.u_x.\xi,\psi) = \psi$ and that $s_X.\gamma = s_X.j_x.\xi = s_x.\xi
= y$; similarly that $t_{M\Gamma}.\delta = \psi$ and that $t_X.\gamma = z$.
Hence if we define $\psi^\ast(\xi)$ to be the composite $Mp.\theta$, then we
obtain a homotopy $\psi^\ast(y) \Rightarrow \psi^\ast(z) \colon V \to X$ as
required. It remains to check three things. Firstly, we must show that
$\psi^\ast(\xi)$ is a constant homotopy; it will
    then necessarily be over $c'$, since $\psi^\ast(y)$ and $\psi^\ast(z)$
    are. For this, we must show that the composite morphism
    $Mx.Mp.(\gamma,\delta) \colon V \to M\Gamma$ factors through
    $\alpha_{1,\Gamma}$; and so we calculate that $Mx.Mp.(\gamma,\delta) =
    M\rho_x.(\gamma,\delta) = Ms_\Gamma.Md_x.(\gamma,\delta) =
    Ms_\Gamma.\delta = Ms_\Gamma.\alpha_{1,M\Gamma}.(\pi_1.u_x.\xi,\psi) =
    \alpha_{1,\Gamma}.(\pi_1.u_x.\xi,s_\Gamma.\psi)$ as required.

Secondly, we must show that for any $f \colon W \to V$, we have $(\psi
    f)^\ast(\xi f) = (\psi^\ast(\xi))f$. This is immediate by the universal
    property of pullback. Finally, we must show that
    when $\psi$ is an identity homotopy, we have $\psi^\ast(\xi) = \xi$. In
    this case, we have that $\psi = r_\Gamma.t_\Gamma.\psi \colon V \to
    M\Gamma$; and are required to show that $Mp.(\gamma,\delta) = j_x.\xi
    \colon V \to MX$. We calculate that
\begin{align*}
\delta &= \alpha_{1,M\Gamma}.(\pi_1.u_x.\xi,\psi)
  = \alpha_{1,M\Gamma}.(\pi_1.u_x.\xi,r_\Gamma.t_\Gamma.\psi)
  = \alpha_{1,M\Gamma}.(\pi_1.u_x.\xi,r_\Gamma.\pi_2.u_x.\xi)\\&
  = Mr_{\Gamma}.\alpha_{1,\Gamma}.(\pi_1.u_x.\xi,\pi_2.u_x.\xi)
  =  Mr_{\Gamma}.\alpha_{1,\Gamma}.u_x.\xi
  = Mr_{\Gamma}.Mx.j_x.\xi\ \text,
\end{align*}
and so have that $Mp.(\gamma,\delta) = Mp.(\id_{MX},Mr_\Gamma.Mx).j_x.\xi =
    Mp.M\lambda_x.j_x.\xi = j_x.\xi$ as required.
\end{proof}
This completes the proof of the two Lemmas, and hence of
Proposition~\ref{choiceofdiagonalfacs}. So we have established the existence of
a choice of diagonal factorisations for the cloven w.f.s.\ associated to any
path object category; the crucial point is that this choice is a suitably
well-behaved one.
\begin{Prop}\label{isfunctorial}
For any path object category $\E$, the choice of diagonal factorisations on its
associated cloven w.f.s.\ given by Proposition~\ref{choiceofdiagonalfacs} is
functorial and stable in the sense of Definition~\ref{stablepath}(ii)--(iii).
\end{Prop}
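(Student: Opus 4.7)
The plan is to proceed in three stages: first, for any morphism of cloven $\R$-maps $(f,g) \colon (x,p) \to (y,q)$ between objects $x \colon X \to \Gamma$ and $y \colon Y \to \Delta$, construct a canonical arrow $I(f,g) \colon M_\Gamma(x) \to M_\Delta(y)$; second, verify that this makes the two squares in~\eqref{twosquares} into morphisms of cloven $\ELL$- and $\R$-maps respectively and that the assignment is functorial; and third, establish the pullback property required for stability.

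For the construction, observe that $My . Mf . j_x = Mg . Mx . j_x = Mg . \alpha_{1,\Gamma} . u_x = \alpha_{1,\Delta} . (1 \times g) . u_x$, the outer equalities using $y f = g x$ and naturality of $\alpha$. Hence the universal property of the pullback defining $M_\Delta(y)$ produces a unique $I(f,g) \colon M_\Gamma(x) \to M_\Delta(y)$ with $j_y . I(f,g) = Mf . j_x$; functoriality of $I$ in $(f,g)$ is then immediate from that of $M$. The $\ELL$-map and $\R$-map morphism conditions on the squares in~\eqref{twosquares} are verified by unpacking Definition~\ref{mormaps} and observing that every ingredient used in Proposition~\ref{choiceofdiagonalfacs}---the strong deformation retraction homotopy $\varphi$ for $r_x$ built from $\eta$, $\tau$ and $\alpha$, and the path-lifting operation for $(s_x,t_x)$ built via Lemmas~\ref{precart} and~\ref{functorialityprop}---is assembled entirely from natural and strong transformations of the path object structure. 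Naturality of these transformations in $f \colon X \to Y$ then yields the required compatibility equations by direct diagram chase.

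For stability, suppose that the underlying square of $(f,g)$ is a pullback, so that $X \cong \Gamma \times_\Delta Y$. Since $M$ preserves pullbacks, we have $MX \cong M\Gamma \times_{M\Delta} MY$, under which iso $Mx$ corresponds to the first projection; the defining pullback for $M_\Gamma(x)$ then realises $M_\Gamma(x)$ as $MY \times_{M\Delta} (M1 \times \Gamma)$. Applying pullback-pasting to the factorisation $Mg . \alpha_{1,\Gamma} = \alpha_{1,\Delta} . (1 \times g)$ (naturality of $\alpha$) rewrites this as $M_\Delta(y) \times_{M1 \times \Delta} (M1 \times \Gamma)$. A further diagram chase, using the evident pullback $X \times_\Gamma X \cong (Y \times_\Delta Y) \times_\Delta \Gamma$ and a compatibility check on the canonical maps down to $Y \times_\Delta Y$, identifies this iterated pullback with $M_\Delta(y) \times_{Y \times_\Delta Y} (X \times_\Gamma X)$, yielding precisely the pullback property required of the right-hand square in~\eqref{twosquares}.

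The principal technical burden is the $\R$-map morphism condition for $(I(f,g), f \times_g f)$: it requires unwinding the construction of the path-lifting operation through Lemmas~\ref{precart} and~\ref{functorialityprop} and verifying that each of the maps $\delta$ and $\theta$ appearing there intertwines appropriately with $I(f,g)$. No new conceptual idea is required---everything reduces ultimately to naturality and strength of the basic structural transformations---but the diagrams involved are large and require careful bookkeeping.
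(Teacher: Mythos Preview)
Your approach is correct, but for functoriality it differs substantially from the paper's. You construct $I(f,g)$ directly via the universal property and then propose to verify the $\ELL$-map and $\R$-map morphism conditions by chasing through all the ingredients of Proposition~\ref{choiceofdiagonalfacs}, including Lemmas~\ref{precart} and~\ref{functorialityprop}. As you yourself note, this is possible in principle but involves large diagrams and careful bookkeeping. The paper instead sidesteps all of that with a ``generic element'' argument: if $\E$ is a path object category and $\C$ any category, then $[\C,\E]$ is again a path object category with pointwise structure, and the categories of $\R$-maps and $\ELL$-maps in $[\C,\E]$ are precisely $[\C,\R\text-\cat{Map}_\E]$ and $[\C,\ELL\text-\cat{Map}_\E]$. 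Applying Proposition~\ref{choiceofdiagonalfacs} once, to the generic $\R$-map in $[\R\text-\cat{Map}_\E,\E]$ corresponding to the identity functor, produces a single diagonal factorisation in that functor category---which, being built pointwise, is automatically functorial in the argument $\R$-map. This dispatches functoriality in one stroke, with no diagram chasing at all.

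For stability, your argument via pullback-preservation of $M$, naturality of $\alpha$, and pullback pasting is essentially the same as the paper's, though the paper formulates it as an isomorphism $f^\ast(M_\Gamma(x)) \cong M_\Delta(f^\ast x)$ over $\Delta$ compatible with $r$, $s$, $t$, rather than directly in terms of the right-hand square of~\eqref{twosquares}.
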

\begin{proof}
To show functoriality, observe first that if $\E$ is a path object category,
and $\C$ any category, then the functor category $[\C, \E]$ becomes a path
object category with the structure given pointwise; and that the category of
$\R$-maps in $[\C, \E]$ is then $[\C, \R\text-\cat{Map}_\E]$, and
correspondingly for the category of $\ELL$-maps. In particular, given any path
object category $\E$, the path object category $[\R\text-\cat{Map}_\E, \E]$
contains a ``generic $\R$-map'', corresponding to the identity functor
$\R\text-\cat{Map}_\E \to \R\text-\cat{Map}_\E$. Applying
Proposition~\ref{choiceofdiagonalfacs} to this generic $\R$-map yields the
desired functoriality.

It remains to show stability: for which it suffices to show that, given a
cloven $\R$-map $x \colon X \to \Gamma$ together with a morphism $f \colon
\Delta \to \Gamma$, there is an isomorphism $f^\ast(M_\Gamma(x)) \to M_\Delta
(f^\ast x)$ over $\Delta$ which is compatible with $r$, $s$ and $t$ in the
obvious way. Now, $f^\ast(M_\Gamma(x))$ is isomorphic to the composite down the
left of the following diagram, all of whose squares are pullbacks:
\begin{equation*}
\cd[@C+0.5em]{
    f^\ast(M_\Gamma(x)) \ar[r] \ar[d] &
    M_\Gamma(x) \ar[r]^{j_x} \ar[d]_{u_x} &
    MX \ar[d]^{Mx} \\
    M1 \times \Delta \ar[r]_-{M1 \times f} \ar[d]_{\pi_2} &
    M1 \times \Gamma \ar[r]_-{\alpha_{1,\Gamma}} \ar[d]^{\pi_2} &
    M \Gamma\rlap{ .}\\
    \Delta \ar[r]_f &
    \Gamma}
\end{equation*}
On the other hand, $M_\Delta(f^\ast x)$ is obtained as the composite with
$\pi_2$ of the arrow down the left of the following diagram, all of whose
squares are again pullbacks
\begin{equation*}
\cd[@C+0.5em]{
    M_\Delta (f^\ast X) \ar[r] \ar[d] &
    M(f^\ast X) \ar[r] \ar[d]_{M(f^\ast x)} &
    MX \ar[d]^{Mx} \\
    M1 \times \Delta \ar[r]_-{\alpha_{1,\Delta}} &
    M\Delta \ar[r]_-{Mf}&
    M \Gamma\rlap{ .}}
\end{equation*}
But since $Mf . \alpha_{1,\Delta} = \alpha_{1, \Gamma} . (M1 \times f)$ by
naturality of $\alpha$, we have these two composites isomorphic to each other
over $\Delta$ as required. The naturality of these isomorphisms, and their
coherence with the remaining data, is easily checked.
\end{proof}

\subsection{Functorial Frobenius structure}\label{sec:functorialfrob} We have now verified that any
path object category can be equipped with a cloven w.f.s.\ which has a stable
functorial choice of diagonal factorisations. The final step in proving
Theorem~\ref{secondmain} is to show that this cloven w.f.s.\ is functorially
Frobenius.
\begin{Prop}\label{frobconstr}
If $\E$ is a path object category, then its associated cloven w.f.s.\ is
functorially Frobenius in the sense of Definition~\ref{stablepath}(iv).
\end{Prop}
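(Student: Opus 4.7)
The plan is to exploit the characterisation of cloven $\ELL$-maps as strong deformation retracts (Proposition~\ref{axcloven}) and of cloven $\R$-map structures as a natural path-lifting operation (Proposition~\ref{pathlift}). Unpacking the hypotheses, $i \colon X \to A$ comes with a retraction $k \colon A \to X$ (so $ki = \id_X$) and a homotopy $\theta \colon \id_A \Rightarrow ik$ satisfying $\theta . i = 1_i$; and $f \colon B \to A$ comes with a path-lifting operation of the type described in Proposition~\ref{pathlift}. Our task is to produce from these a strong deformation retraction of the pullback $\bar \imath \colon f^\ast X \to B$.

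First I would construct the candidate retraction and homotopy. Whiskering gives a homotopy $\theta^\circ.f \colon ikf \Rightarrow f \colon B \to A$; applying the path-lifting operation on $f$ with $x = \id_B$ and $\phi = \theta^\circ.f$ yields a homotopy
\begin{equation*}
\widetilde{\theta^\circ.f} \colon g \Rightarrow \id_B \colon B \to B
\end{equation*}
in $B$, where $g \defeq (\theta^\circ.f)^\ast(\id_B)$, satisfying $f.\widetilde{\theta^\circ.f} = \theta^\circ.f$ and in particular $f.g = ikf = i.(kf)$. The equality $f.g = i.(kf)$ means that the universal property of the pullback $f^\ast X$ induces a unique $\bar k \colon B \to f^\ast X$ with $\bar\imath.\bar k = g$ and $\bar f.\bar k = kf$. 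I then take $\bar\theta \defeq (\widetilde{\theta^\circ.f})^\circ \colon B \to MB$, a homotopy $\id_B \Rightarrow g = \bar\imath.\bar k$.

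Next I would verify the three defining conditions of a strong deformation retraction for $\bar\imath$. Precomposing either $\bar k.\bar\imath$ or $\bar\theta.\bar\imath$ with $\bar\imath$ amounts, under the pullback/$M$-of-pullback identifications, to evaluating the path-lifting data on the whiskered homotopy $\theta^\circ.f.\bar\imath = \theta^\circ.i.\bar f$. But $\theta.i = 1_i$ by hypothesis, whence $\theta^\circ.i = 1_i$ as well, so $\theta^\circ.f.\bar\imath$ is the identity homotopy on $i\bar f$. The identity-preservation and naturality clauses of Proposition~\ref{pathlift} then give $g.\bar\imath = (\theta^\circ.f.\bar\imath)^\ast(\bar\imath) = \bar\imath$ and $\widetilde{\theta^\circ.f}.\bar\imath = \widetilde{\theta^\circ.f.\bar\imath} = 1_{\bar\imath}$. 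Combined with the evident $\bar f.\bar k.\bar\imath = kf.\bar\imath = k.i.\bar f = \bar f$, a pullback argument yields $\bar k.\bar\imath = \id_{f^\ast X}$; and the second identity gives $\bar\theta.\bar\imath = r_B.\bar\imath$, i.e.\ triviality of $\bar\theta$ on $f^\ast X$. The remaining conditions $s_B.\bar\theta = \id_B$ and $t_B.\bar\theta = \bar\imath.\bar k$ are immediate from the construction. This equips $\bar\imath$ with a cloven $\ELL$-map structure.

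Finally, for functoriality in the sense of Definition~\ref{stablepath}(iv), I would argue as in Proposition~\ref{isfunctorial}: the entire construction was performed using only the naturally-indexed path-lifting operation on $f$, the natural operations on homotopies ($\theta \mapsto \theta^\circ$, whiskering), and universal properties of pullbacks, each of which respects morphisms in $\R\text-\cat{Map} \times_\E \ELL\text-\cat{Map}$. Equivalently, one may apply the construction to the ``generic'' pullback pair in the path object category $[\R\text-\cat{Map}_\E \times_\E \ELL\text-\cat{Map}_\E, \E]$ to obtain the required functor. The main obstacle, as suggested above, is the triviality verification $\bar\theta.\bar\imath = r_B.\bar\imath$: it is the one point where the hypothesis that $\theta$ is itself trivial on $X$ is indispensable, and it is what allows the pullback square to inherit the ``deformation'' part of the deformation-retract structure.
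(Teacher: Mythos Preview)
Your proposal is correct and follows essentially the same route as the paper's own proof: reverse the deformation $\theta$ and whisker by $f$ to get a homotopy $ikf \Rightarrow f$, lift it along the cloven $\R$-map $f$ via Proposition~\ref{pathlift}, use the source of the lift together with $kf$ to induce $\bar k$ by the pullback property, take $\bar\theta$ to be the reversal of the lifted homotopy, and then verify the strong deformation retraction identities using naturality and identity-preservation of the path-lifting operation. Your functoriality argument via the generic object in $[\R\text-\cat{Map}_\E \times_\E \ELL\text-\cat{Map}_\E, \E]$ is exactly the device the paper invokes (by analogy with Proposition~\ref{isfunctorial}).
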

\begin{proof}
Let $(f,p) \colon B \to A$ be a cloven $\R$-map and $(i,q) \colon X \to A$ a
cloven $\ELL$-map in $\E$, and consider a pullback square
\begin{equation*}
\cd{
  f^\ast X \ar[r]^{\bar f} \ar[d]_{\bar \imath} & X \ar[d]^i \\
  B \ar[r]_f & A\rlap{ .}
}
\end{equation*}
We are required to equip $\bar \imath$ with the structure of a cloven
$\ELL$-map. Recall from Theorem~\ref{axcloven} that to give a cloven $\ELL$-map
structure on $i$ is to give a strong deformation retraction for it: thus a map
$k \colon A \to X$ satisfying $ki = \id_X$ and a homotopy $\theta \colon 1_A
\Rightarrow ik \colon A \to A$ such that $\theta.i = 1_i \colon i \Rightarrow
i$. Correspondingly, to construct a cloven $\ELL$-map structure on $\bar
\imath$ it suffices to give a strong deformation retraction for it. So consider
the homotopy $\phi = (\theta f)^\circ \colon ikf \Rightarrow f \colon B \to A$,
where we recall from Remark~\ref{2catstructure} that $(\thg)^\circ$ is the
reversal operation on homotopies induced by $\tau$. Since $f$ is a cloven
$\R$-map, we may apply its path-lifting property, described in
Proposition~\ref{pathlift}, to obtain from this a morphism $\phi^\ast(1_B)
\colon B \to B$ and a homotopy $\bar \phi \colon \phi^\ast(1_B) \Rightarrow 1_B
\colon B \to B$ such that $f.\bar \phi = \phi$. In particular, this means that
$f . \phi^\ast(1_B) = ikf$, and so we may induce a morphism $\bar k \colon B
\to f^\ast X$ by the universal property of pullback in
\begin{equation*}
   \cd{
     B \ar@/^6pt/[drr]^{kf} \ar@/_6pt/[ddr]_{\phi^\ast(1_B)} \ar@{.>}[dr]|{\bar k}\\
     & f^\ast X \ar[r]^{\bar f} \ar[d]^{\bar \imath} &
     X \ar[d]^{i} \\
     & B \ar[r]_{f} & A\ \text.
   }
\end{equation*}
Now taking $\bar \theta = (\bar \phi)^\circ \colon 1_B \Rightarrow \bar \imath
\bar k$, then it remains to show that $\bar k. \bar \imath = 1_{f^\ast B}$ and
that $\bar \theta. \bar \imath$ is the identity homotopy on $\bar \imath$.
Firstly, to show that $\bar k. \bar \imath = 1_{f^\ast B}$, it suffices to
demonstrate equality on postcomposition with $\bar f$ and with $\bar \imath$.
On the one hand, we have that $\bar f. \bar k . \bar \imath = k.f.\bar \imath =
k.i.\bar f = \bar f$; whilst on the other, we have $\bar \imath.\bar k.\bar
\imath = \phi^\ast(1_B).\bar \imath = (\phi.\bar \imath)^\ast(\bar \imath)$ by
naturality of the liftings described in Proposition~\ref{pathlift}. But
$\phi.\bar \imath = (\theta f)^\circ.\bar \imath = (\theta f \bar \imath)^\circ
= (\theta i \bar f)^\circ = (1_{i \bar f})^\circ = 1_{i \bar f}$, and so by
Proposition~\ref{pathlift} again we have $(\phi.\bar \imath)^\ast(\bar \imath)
= \bar \imath$ as required. Thus $\bar k. \bar \imath = 1_{f^\ast B}$, and it
remains only to show that $\bar \theta. \bar \imath = 1_{\bar\imath}$. Applying
$(\thg)^\circ$ to both sides, it suffices to show that $\bar \phi. \bar \imath
= 1_{\bar \imath}$. But by naturality of the liftings of
Proposition~\ref{pathlift}, we have $\bar \phi. \bar \imath =
\overline{\phi.\bar \imath}$, and since $\phi.\bar \imath$ is an identity
homotopy as above, we deduce that $\bar \phi.\bar \imath$ is as well.

Thus we have equipped $\bar \imath$ with the structure of a strong deformation
retract, and hence of a cloven $\ELL$-map, which completes the verification
that the cloven w.f.s.\ associated to $\E$ is Frobenius. Finally, to show that
it is \emph{functorially} Frobenius we apply an entirely analogous argument to
that given in Proposition~\ref{isfunctorial}.
\end{proof}
This now completes the proof of Theorem~\ref{secondmain}.

\section{The simplicial path object category}\label{simplicialexample}
In this final section, we fill in the details of the proof, sketched in
Section~\ref{ssetssection}, that simplicial sets form a path object category.
We begin by establishing some notational conventions. Recall that $\Delta$ is
the category whose objects are the ordered sets $[n] = \{0, \dots, n\}$ (for $n
\in \mathbb N$) and whose morphisms are order-preserving maps, and that the
category $\cat{SSet}$ of simplicial sets is the presheaf category $[\Delta^\op,
\cat{Set}]$. As before, we write $X_n \defeq X([n])$ for the set of
$n$-simplices of a simplicial set $X$; and given an element $x \in X_n$ and a
map $\alpha \colon [m] \to [n]$, we shall write $x\cdot \alpha$ for
$X(\alpha)(x) \in X_m$. We refer to maps of $\Delta$ as \emph{simplicial
operators}, and call monomorphisms \emph{face operators} and epimorphisms
\emph{degeneracy operators}. Of particular note are the operators $\delta_i
\colon [n-1] \to [n]$ and $\sigma_i \colon [n+1] \to [n]$ (for $0 \leqslant i
\leqslant n$) defined as follows: $\delta_i$ is the unique monomorphism $[n-1]
\to [n]$ whose image omits $i$; whilst $\sigma_i$ is the unique epimorphism
$[n+1] \to [n]$ whose image repeats~$i$. The maps $\delta_i$ and $\sigma_i$
generate the category $\Delta$ under composition, though not freely; they obey
the following \emph{simplicial identities}, describing how faces and
degeneracies commute past each other:
\begin{equation*}
\begin{aligned}
\delta_j \delta_i &= \delta_i \delta_{j-1} \qquad \text{for $i < j$} \\
\sigma_j \sigma_i &= \sigma_i \sigma_{j+1} \qquad\!\!\, \text{for $i \leqslant j$}
\end{aligned} \qquad
\begin{aligned}
\sigma_j \delta_i &= \begin{cases}
\delta_i \sigma_{j-1} & \text{for $i < j$} \\
\id & \text{for $i = j, j+1$} \\
\delta_{i-1} \sigma_j & \text{for $i > j+1$.}
\end{cases}
\end{aligned}
\end{equation*}

\subsection{Path objects}
Our first task is to describe the construction assigning to each simplicial set
$X$ the simplicial set of Moore paths in $X$. We begin by making precise the
informal description of $n$-dimensional Moore paths given in
Section~\ref{ssetssection}.
\begin{Defn}\label{mooren}
Let $X$ be a simplicial set and let $\xi, \xi'$ be $n$-simplices in $X$. An
\emph{$n$-dimensional Moore path} from $\xi$ to $\xi'$ is given by
$n$-simplices $\xi = \zeta_0, \dots, \zeta_k = \xi'$ and $(n+1)$-simplices
$\phi_1$, \dots, $\phi_k$, together with a function
\begin{equation}\label{traversal}\theta \colon \{1, \dots, k\} \to [n] \times
\{+,-\}\end{equation}
such that
\begin{equation}\label{discipline}
\begin{aligned}
\zeta_{i-1} &= \begin{cases}
  \phi_i \cdot \delta_{m+1} & \text{if $\theta(i) = (m,+)$}\\
  \phi_i \cdot \delta_m & \text{if $\theta(i) = (m,-)$}
\end{cases} \qquad
\zeta_i &= \begin{cases}
  \phi_i \cdot \delta_m & \text{if $\theta(i) = (m,+)$}\\
  \phi_i \cdot \delta_{m+1} & \text{if $\theta(i) = (m,-)$\ .}
\end{cases}
\end{aligned}
\end{equation}
\end{Defn}
We call functions like~\eqref{traversal} \emph{$n$-dimensional traversals}; the
natural number $k$ will be called the \emph{length} of the traversal. In order
to explain the role which such traversals play, we consider once again the
example of a $1$-dimensional Moore path from~\eqref{typ1} above:
\begin{equation*}
\cd[@+1.5em]{
  x \ar[r]^{f_1} \ar[d]_\xi \ar[dr]|{\zeta_1} \ar[drr]|{\zeta_2} &
  z_1 \ar[r]^{f_2} \ar[dr]|{\zeta_3} &
  z_2 \ar[d]|{\zeta_4} &
  z_3 \ar[l]_{f_3} \ar[dl]|{\zeta_5} \ar[d]|{\zeta_6} \ar[r]^{f_4} \ar@{}[dr]|(0.3){\phi_7} &
  z_4 \ar[dl]|{\zeta_7} \ar@{}[d]|(0.3){\phi_8} &
  y\rlap{ .} \ar[l]_{f_5} \ar[dll]^{\xi'} \\
  x' \ar[r]_{f'_1} \ar@{}[ur]|(0.3){\phi_1}&
  z'_1 \ar@{}[u]|(0.25){\phi_2} \ar@{}[u]|(0.75){\phi_3} \ar@{}[ur]|(0.7){\phi_4} &
  z'_2 \ar[l]^{f'_2} \ar[r]_{f'_3} &
  y' \ar@{}[ul]|(0.7){\phi_5} \ar@{}[ul]|(0.3){\phi_6} & {}
}
\end{equation*}
The intuition is that each $2$-simplex $\phi_i$ appearing in it is so oriented
that it may be ``projected'' in an orientation-preserving manner onto $\xi$.
Under this projection, precisely one edge of $\phi_i$ will be collapsed; and
the value of $\theta(i)$ indicates at which vertex of $\xi$ this collapsing
occurs, and in which direction the collapsed edge points. Thus, for the above
example, the values of the traversal $\theta$ are given by the list
\[(1,+),\,(1,-),\,(0,+),\,(0,+),\,(0,-),\,(1,+),\,(0,+),\,(0,-)\ \text.\]
Observe that the face equations of~\eqref{discipline} enforce the discipline
that makes this intuition precise.
\begin{Prop}\label{simpln}
There is a simplicial set $MX$ whose $n$-simplices are the $n$-dimensional
Moore paths in $X$.
\end{Prop}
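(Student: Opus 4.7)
The plan is to define the simplicial structure on $MX$ by prescribing the action of face and degeneracy operators on $n$-dimensional Moore paths, and then verifying the simplicial identities.

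For a face operator $\delta_j \colon [n-1] \to [n]$ acting on a Moore path $P = (\zeta_0, \phi_1, \ldots, \phi_k, \zeta_k; \theta)$, I would define $P \cdot \delta_j$ to have vertices $\zeta_i \cdot \delta_j$ together with connecting simplices obtained as follows. For each $i$ with $\theta(i) = (p, \epsilon)$: if $j < p$, retain $\phi_i$ as $\phi_i \cdot \delta_j$ with traversal value $(p-1, \epsilon)$; if $j > p$, retain it as $\phi_i \cdot \delta_{j+1}$ with traversal value $(p, \epsilon)$; and if $j = p$, delete $\phi_i$ entirely from the path, noting that $\zeta_{i-1} \cdot \delta_p = \zeta_i \cdot \delta_p$ by the simplicial identity $\delta_{p+1} \delta_p = \delta_p \delta_p$, so the two adjacent vertices collapse into one.

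For a degeneracy operator $\sigma_j \colon [n+1] \to [n]$, the definition is analogous: the new vertices are $\zeta_i \cdot \sigma_j$, and for each $\theta(i) = (p, \epsilon)$, if $j < p$ we take $\phi_i \cdot \sigma_j$ with traversal value $(p+1, \epsilon)$; if $j > p$ we take $\phi_i \cdot \sigma_{j+1}$ with traversal value $(p, \epsilon)$; and if $j = p$ we replace $\phi_i$ by two connecting simplices $\phi_i \cdot \sigma_p$ (with $(p+1, \epsilon)$) and $\phi_i \cdot \sigma_{p+1}$ (with $(p, \epsilon)$), separated by $\phi_i$ itself serving as a new intermediate $(n+1)$-simplex in the lengthened path. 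In each case, the prescription for $\epsilon = -$ is treated symmetrically, with the roles of $p$ and $p+1$ interchanged.

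One would first check that each of these prescriptions yields a valid Moore path by verifying that the equations~\eqref{discipline} hold for the new traversal; each such verification amounts to a single application of one of the simplicial identities relating $\delta$'s and $\sigma$'s. The main obstacle is then to verify functoriality: that the above actions satisfy the simplicial identities, so that $MX$ really is a presheaf on $\Delta$. This reduces to a case analysis on pairs of operators and their joint interaction with each traversal entry $(p, \epsilon)$; it is tedious but essentially mechanical, each case being settled by a short manipulation with the simplicial identities. A more conceptual alternative is to realise each Moore path as a morphism into $X$ from a suitable ``zigzag'' simplicial set $Z_{k,n,\theta}$ freely generated by a chain of $n$-simplices glued along $(n+1)$-simplex faces according to $\theta$, and induce the operator action on $MX$ from a system of maps $Z_{k',m,\theta'} \to Z_{k,n,\theta}$ in $\cat{SSet}$; this avoids the direct case analysis but defers the combinatorics to the explicit construction of the universal objects $Z_{k,n,\theta}$.
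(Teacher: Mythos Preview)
Your proposal is correct: the direct definition of face and degeneracy actions you give does produce a well-defined simplicial set, and the verification you outline (discipline equations via single simplicial identities, then functoriality via case analysis) goes through.

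The paper, however, takes precisely the ``conceptual alternative'' you mention at the end. It defines the simplicial realisation $\hat\theta$ of a traversal (your $Z_{k,n,\theta}$) as the colimit of the evident zigzag of representables, so that an $n$-simplex of $MX$ is a pair $(\theta,\phi)$ with $\phi\colon\hat\theta\to X$. The action of an arbitrary operator $\alpha\colon[m]\to[n]$ is then given in one stroke: first $\theta\cdot\alpha$ is characterised by a pullback square in $\cat{Set}$ (together with an ordering condition on fibres), and then a map $\hat\alpha\colon\widehat{\theta\cdot\alpha}\to\hat\theta$ is constructed and shown to be the \emph{unique} map compatible with the boundary data. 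Functoriality $\widehat{\alpha\beta}=\hat\alpha\circ\hat\beta$ then follows immediately from this uniqueness, with no case analysis on pairs of generators.

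The trade-off is this: your approach is more elementary and needs no auxiliary objects, but the five families of simplicial identities, each interacting with a three-way case split on the traversal entry, make the functoriality check genuinely laborious. The paper's approach front-loads a small amount of work (writing down the maps $g_j$, $h_j$ explicitly for a general $\alpha$) but then gets functoriality for free. More importantly, the isomorphism $(MX)_n\cong\sum_{\theta}\cat{SSet}(\hat\theta,X)$ is reused repeatedly in what follows: it makes pullback-preservation of $M$ immediate, and the maps $s_X$, $t_X$, $r_X$, $m_X$, $\tau_X$, the strength $\alpha_{1,X}$, and $\eta_X$ are all defined and checked for simpliciality by manipulating cocones under the diagrams $D_\theta$. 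So while your direct route establishes the proposition, the paper's route sets up machinery that pays for itself many times over in the rest of Section~\ref{simplicialexample}.
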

To prove this Proposition, we must describe a coherent action by simplicial
operators on the simplices of $MX$. The basic idea may be illustrated with
reference to the examples of Moore paths given above. In the case of a
$1$-dimensional Moore path like~\eqref{typ1}, its two $0$-dimensional faces
should be obtained by projection on to the top and bottom rows of the diagram;
whilst in the case of a $0$-dimensional Moore path like~\eqref{typ0}, its image
under the degeneracy $\sigma_0 \colon [1] \to [0]$ will be the $1$-dimensional
Moore path
\begin{equation*}
\cd[@+2em]{
  x \ar[r]^{f_1} \ar[d]_{x\cdot\sigma_0} \ar[dr]|{f_1} &
  z_1 \ar[r]^{f_2} \ar[d]|{z_1\cdot\sigma_0} \ar[dr]|{f_2}&
  z_2 \ar[d]|{z_2\cdot\sigma_0}&
  z_3 \ar[l]_{f_3} \ar[r]^{f_4} \ar[d]|{z_3\cdot\sigma_0} \ar[dl]|{f_3} \ar[dr]|{f_4}&
  z_4 \ar[d]|{z_4\cdot\sigma_0}&
  x' \ar[l]_{f_5} \ar[d]|{x'\cdot\sigma_0} \ar[dl]|{f_5}\\
  x \ar[r]_{f_1} \ar@{}[ur]|(0.28){f_1\cdot \sigma_0} \ar@{}[ur]|(0.72){f_1\cdot \sigma_1} &
  z_1 \ar[r]_{f_2} \ar@{}[ur]|(0.28){f_2\cdot \sigma_0} \ar@{}[ur]|(0.72){f_2\cdot \sigma_1} &
  z_2 &
  z_3 \ar[l]^{f_3} \ar[r]_{f_4} \ar@{}[ur]|(0.28){f_4\cdot \sigma_0} \ar@{}[ur]|(0.72){f_4\cdot \sigma_1}
  \ar@{}[ul]|(0.28){f_3\cdot \sigma_0} \ar@{}[ul]|(0.72){f_3\cdot \sigma_1} &
  z_4 &
  x'\ \text. \ar[l]^{f_5} \ar@{}[ul]|(0.28){f_5\cdot \sigma_0} \ar@{}[ul]|(0.72){f_5\cdot \sigma_1}
}
\end{equation*}

In giving a formal proof of Proposition~\ref{simpln}, it will be convenient to
consider first the special case where $X = 1$, the terminal simplicial set.
Observe that the $n$-simplices of $M1$ are simply $n$-dimensional traversals.
Given such a traversal $\theta$ of length $k$ and a simplicial operator $\alpha
\colon [m] \to [n]$, it is easy to see that there is a unique pullback diagram
\begin{equation}\label{pbsquare}
    \cd{
        \{1, \dots, \ell\} \ar[r]^-{\psi} \ar[d]_{\bar \alpha} &
        [m] \times \{+,-\} \ar[d]^{\alpha \times \{+,-\}} \\
        \{1, \dots, k\} \ar[r]_-{\theta} &
        [n] \times \{+,-\}
    }
\end{equation}
such that the following condition is satisfied:
\begin{equation}\tag{\dag}\label{condition}
\left\{\
\begin{array}{p{10cm}}
\rm{$\bar \alpha$ is order-preserving, and for $i \in \{1,\dots,k\}$, the
restriction of $\psi$ to the fibre of $\bar \alpha$ over $i$ is order-reversing
if $\theta(i) = (x,+)$ for some $x$, and order-preserving if $\theta(i) =
(x,-)$.}
\end{array}
\right.
\end{equation}
We may therefore define $\theta \cdot \alpha \in (M1)_m$ to be $\psi$. It is
immediate that $\theta \cdot \id_m = \theta$, and it's easy to check that
pullback squares satisfying (\dag) are stable under composition, so that
$\theta \cdot (\alpha \beta) = (\theta \cdot \alpha) \cdot \beta$. Thus $M1$ is
a simplicial set as required; and we now exploit this fact in proving the same
for a general $MX$. The key observation we will make is that a typical
$n$-simplex of $MX$ is given by an $n$-simplex of $M1$ that has been suitably
labelled with simplices of $X$. To make this precise, we first need:
\begin{Not}
Given a traversal $\theta$, we write
\begin{equation*}
\theta^+(i) = \left\{ \begin{array}{cl}
x & \mbox{if } \theta(i) = (x, +) \\
x+1 & \mbox{if } \theta(i) = (x, -)
\end{array} \right. \quad \text{and} \quad
\theta^-(i) = \left\{ \begin{array}{cl}
x+1 & \mbox{if } \theta(i) = (x, +) \\
x & \mbox{if } \theta(i) = (x, -)
\end{array} \right.
\end{equation*}
Also, in circumstances where it cannot cause confusion, we may choose to write
the $x$ such that $\theta(i) = (x,\rho)$ simply as $\theta(i)$.
\end{Not}
In terms of this notation, to give an $n$-simplex of $MX$ is to give a
traversal $\theta \in (M1)_n$ of length $k$ together with $n$-simplices
$\zeta_0, \dots, \zeta_k$ and $(n+1)$-simplices $\phi_1$, \dots, $\phi_k$ such
that for each $1 \leqslant i \leqslant k$, we have $\zeta_{i-1} = \phi_i \cdot
\delta_{\theta^-i}$ and $\zeta_i = \phi_i \cdot \delta_{\theta^+i}$. We may
further recast this description by means of the following definition.
\begin{Defn}\label{simplrealisation}
If $\theta \colon \{1, \dots, k\} \to [n] \times \{+,-\}$ is a traversal, then
its \emph{simplicial realisation} is the simplicial set $\hat \theta$ obtained
as the colimit of the diagram
\begin{equation}\label{dtheta}
 D_\theta \defeq \ \cd{
    [n] \ar[dr]_{\delta_{\theta^-(1)}} & & [n] \ar[dl]^{\delta_{\theta^+(1)}} \ar[dr]_{\delta_{\theta^-(2)}} & &
    \dots \ar[dl]^{\delta_{\theta^+(2)}} \ar[dr]_{\delta_{\theta^-(k)}} & & [n] \ar[dl]^{\delta_{\theta^+(k)}} \\
    & [n+1] & & [n+1] & & [n+1]}
\end{equation}
of simplicial sets, wherein we identify objects and morphisms of $\Delta$ with
their images under the Yoneda embedding $\Delta \to [\Delta^\op, \cat{Set}]$.
We write $s_\theta, t_\theta \colon [n] \to \hat \theta$ for the colimit
injections from the leftmost and rightmost copies of $[n]$.
\end{Defn}
Using this definition we see that an $n$-simplex of $MX$ may be identified with
a pair $(\theta, \phi)$ where $\theta$ is an $n$-simplex of $M1$ and $\phi
\colon \hat \theta \to X$. Consequently, to equip $MX$ with the structure of a
simplicial set, it suffices to prove:
\begin{Prop}\label{realisemaps}
To every simplicial operator $\alpha \colon [m] \to [n]$ and traversal $\theta$
we may assign a map of simplicial sets $\hat \alpha \colon \widehat{\theta
\cdot \alpha} \to \hat \theta$. Moreover, this assignment is functorial in the
sense that $\widehat{1_{[n]}} = 1_{\hat \theta}$ and $\widehat{\beta \circ
\alpha} = \hat \beta \circ \hat \alpha$.
\end{Prop}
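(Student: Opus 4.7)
I would use the universal property of $\widehat{\theta\cdot\alpha}$ as the colimit of the zigzag $D_{\theta\cdot\alpha}$ in $\cat{SSet}$. Writing $\psi := \theta\cdot\alpha$, it then suffices to exhibit a cocone: maps $f_j \colon [m] \to \hat\theta$ for $j \in \{0, \ldots, \ell\}$ and $g_j \colon [m+1] \to \hat\theta$ for $j \in \{1, \ldots, \ell\}$ satisfying $g_j \circ \delta_{\psi^-(j)} = f_{j-1}$ and $g_j \circ \delta_{\psi^+(j)} = f_j$. The key step is to construct the edge maps $g_j$; once these are in place, the vertex maps are read off at the boundary by $f_0 := g_1 \circ \delta_{\psi^-(1)}$ and $f_\ell := g_\ell \circ \delta_{\psi^+(\ell)}$, while at interior vertices one must verify that the two candidates $g_j \circ \delta_{\psi^+(j)}$ and $g_{j+1} \circ \delta_{\psi^-(j+1)}$ agree.

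The edge maps are defined as follows. For each $j \in \{1, \ldots, \ell\}$, writing $y$ for the first coordinate of $\psi(j) \in [m] \times \{+,-\}$, let $\beta_j \colon [m+1] \to [n+1]$ be the monotone operator given by
\begin{equation*}
\beta_j(r) \;=\; \begin{cases} \alpha(r) & \text{if } r \leq y, \\ \alpha(r-1) + 1 & \text{if } r > y, \end{cases}
\end{equation*}
and set $g_j := \kappa_{\bar\alpha(j)} \circ \beta_j$, where $\kappa_i \colon [n+1] \to \hat\theta$ is the colimit injection from the $i$-th edge node of $D_\theta$. One checks immediately that $\beta_j$ sends the adjacent pair $\{y,y+1\}$ onto $\{\alpha(y),\alpha(y)+1\}$, which by the pullback defining $\psi$ equals $\{\theta^-(\bar\alpha(j)), \theta^+(\bar\alpha(j))\}$; this encodes that the inserted edge of $[m+1]$ is sent to the inserted edge of $[n+1]$ at the correct position.

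The main technical obstacle is the interior-vertex consistency, which I would handle by case analysis on whether $\bar\alpha(j) = \bar\alpha(j+1)$. In the \emph{same-fibre} case, the order discipline $(\dag)$ forces $y_{j+1} = y_j \mp 1$ with the sign determined by $\theta(\bar\alpha(j))$, and direct substitution into the explicit formula gives $\beta_j \circ \delta_{\psi^+(j)} = \beta_{j+1} \circ \delta_{\psi^-(j+1)}$ in $\Delta$. In the \emph{different-fibre} case $\bar\alpha(j) = i < i' = \bar\alpha(j+1)$, a direct computation (using that $j$ is the last index in fibre $i$ and $j+1$ is the first in fibre $i'$) shows $\beta_j \circ \delta_{\psi^+(j)} = \delta_{\theta^+(i)} \circ \alpha$ and $\beta_{j+1} \circ \delta_{\psi^-(j+1)} = \delta_{\theta^-(i')} \circ \alpha$, so that the two candidates reduce to $\iota_i \circ \alpha$ and $\iota_{i'-1} \circ \alpha$ respectively, using the gluing relations $\iota_h = \kappa_h \circ \delta_{\theta^+(h)} = \kappa_{h+1} \circ \delta_{\theta^-(h+1)}$ in $\hat\theta$. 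Finally, each skipped index $h \in \{i+1, \ldots, i'-1\}$ corresponds by the pullback construction to a value of $\theta(h)$ whose first coordinate lies outside the image of $\alpha$, whence $\delta_{\theta^+(h)} \circ \alpha = \delta_{\theta^-(h)} \circ \alpha$ in $\Delta$; composing with $\kappa_h$ and using the gluing relations yields $\iota_{h-1} \circ \alpha = \iota_h \circ \alpha$, and telescoping across the gap gives $\iota_i \circ \alpha = \iota_{i'-1} \circ \alpha$ as required.

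Functoriality $\widehat{\beta \circ \alpha} = \hat\beta \circ \hat\alpha$ then follows from the uniqueness clause of the colimit universal property, once one verifies that $\hat\beta \circ \hat\alpha$ induces the same cocone on $D_{\theta\cdot(\beta\alpha)}$ as the direct construction of $\widehat{\beta \circ \alpha}$; this in turn reduces to a routine identity between the associated $\beta$-operators, immediate from the explicit formula.
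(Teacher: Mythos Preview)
Your proposal is correct and follows essentially the same approach as the paper: you write down the same explicit operator $\beta_j(r) = \alpha(r)$ for $r \leqslant y$ and $\alpha(r-1)+1$ for $r > y$, and your case analysis (same fibre versus different fibre, with a telescoping argument across skipped indices) mirrors the paper's treatment. The only organizational difference is that the paper groups the construction by fibres $i \in \{1,\dots,k\}$ of $\bar\alpha$, building for each $i$ a map $f_i \colon \widehat{\psi_i} \to [n+1]$, whereas you group by edges $j \in \{1,\dots,\ell\}$ of $D_\psi$; and for functoriality the paper argues that each $f_i$ is \emph{uniquely} determined by the boundary data, which immediately forces $\widehat{\beta\alpha} = \hat\beta \circ \hat\alpha$, whereas you propose to verify the cocone identity directly---a valid alternative, though the uniqueness route is slightly cleaner.
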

Indeed, given this result we may define an action of the simplicial operator
$\alpha \colon [m] \to [n]$ on an $n$-simplex $(\theta, \phi)$ of $MX$ by
$(\theta, \phi) \cdot \alpha = (\theta \cdot \alpha, \phi \circ \hat \alpha)$;
with the coherence equations for this action following easily from the
functoriality in Proposition~\ref{realisemaps}. Thus to complete the proof that
$MX$ is a simplicial set, it remains only to give:

\begin{proof}[of Proposition~\ref{realisemaps}]
Suppose that $\theta \colon \{1, \dots, k\} \to [n] \times \{+,-\}$, and let us
write $\psi \colon \{1, \dots, \ell\} \to [m] \times \{+,-\}$ for the traversal
$\theta \cdot \alpha$; recall that it is defined as the unique function fitting
into a pullback square of the form~\eqref{pbsquare}. For each $1 \leqslant j
\leqslant k$, we define $\psi_i$ to be the traversal given by restricting
$\psi$ to the fibre $\bar \alpha^{-1}(i) \subset \{1, \dots, \ell\}$ (and
renumbering, since the smallest element of this fibre is probably not $1$).
Then to give a morphism $\hat \psi \to \hat \theta$, it suffices to find maps
$f_1, \dots, f_k$ rendering commutative the diagram
\begin{equation}\label{f1fk}
  \cd[@R-0.5em]{
    [m] \ar[d]_{\alpha} \ar[dr]^{s_{\psi_1}} & &
    [m] \ar[dl]_{t_{\psi_1}} \ar[dr]^{s_{\psi_2}} \ar[d]^{\alpha} & &
    \dots \ar[dl]_{t_{\psi_2}} \ar[dr]^{s_{\psi_k}}  & &
    [m] \ar[dl]_{t_{\psi_k}} \ar[d]^{\alpha} \\
    [n] \ar[dr]_{\delta_{\theta^-(1)}} &
    \widehat{\psi_1} \ar@{.>}[d]_{f_1}&
    [n] \ar[dl]^{\delta_{\theta^+(1)}} \ar[dr]_{\delta_{\theta^-(2)}} &
    \widehat{\psi_2} \ar@{.>}[d]_{f_2}&
    \dots \ar[dl]^{\delta_{\theta^+(2)}} \ar[dr]_{\delta_{\theta^-(k)}} &
    \widehat{\psi_k} \ar@{.>}[d]_{f_k}&
    [n]\rlap{ ,} \ar[dl]^{\delta_{\theta^+(k)}} \\
    & [n+1] & & [n+1] & & [n+1]}
\end{equation}
since $\hat \psi$ and $\hat \theta$ are the respective colimits of the upper
and lower zigzags in this diagram. In fact, we claim that there is a unique way
of choosing the $f_i$'s given the remaining data: which as well as proving the
existence of $\hat \alpha$, easily implies the functoriality in $\alpha$. It
remains to prove the claim. So let $1 \leqslant i \leqslant k$, and assume that
$\theta(i) = (a, \mathord -)$ for some $0 \leqslant a \leqslant n$: something
we may do without loss of generality, since the case where $\theta(i) =
(a,\mathord +)$ is entirely dual. We first consider the situation in which $a$
is not in the image of $\alpha$. In this case, we have that $\widehat{\psi_i} =
[m]$ and $s_{\psi_i} = t_{\psi_i} = 1_{[m]}$, so that $f_i$ is necessarily
unique, and will exist so long as the square\vskip-\baselineskip
\begin{equation*}
    \cd[@-1em]{
     & [m] \ar[dl]_\alpha \ar[dr]^{\alpha} \\ [n] \ar[dr]_{\delta_a} & & [n] \ar[dl]^{\delta_{a+1}} \\ & [n+1]
    }
\end{equation*}
commutes: which it does since $a$ is not in the image of $\alpha$. Turning now
to the case where $a$ \emph{is} in the image of $\alpha$, we observe that
$\alpha^{-1}(a)$ is a non-empty segment of $[m]$, and so writing $p$ and $q$
for its smallest and largest elements, we have by virtue of the condition
(\dag) defining $\psi$ that $\psi_i$ is the traversal of length $q-p+1$ given
by $\psi_i(x) = (p+x-1, \mathord -)$; 
so that to give $f_i$ is equally well to give the dotted maps in
\begin{equation}\label{togive}
\cd[@R-0.8em@C-0.5em]{
    [m] \ar[dr]^{\delta_{p}} \ar@/_36pt/[dddrrr]_{h_p \defeq \delta_a \alpha}  & &
    [m] \ar[dl]_{\delta_{p+1}} \ar[dr]^{\delta_{p+1}} \ar@/_18pt/@{.>}[dddr]|{h_{p+1}} & &
    \dots \ar[dl]_{\delta_{p+2}} \ar[dr]^{\delta_{q}}  & &
    [m] \rlap{ .}\ar[dl]_{\delta_{q+1}} \ar@/^36pt/[dddlll]^{\ \ h_{q+1} \defeq \delta_{a+1} \alpha}  \\
    & [m+1] \ar@/_9pt/@{.>}[ddrr]|{g_p} & & [m+1] \ar@{.>}[dd]|{g_{p+1}} & & [m+1] \ar@/^9pt/@{.>}[ddll]|{g_q}\\ \\
    & & & [n+1]
}
\end{equation}
We wish to show that there is a unique way of doing this. Note first that if
such maps exist, then commutativity in
\begin{equation*}
    \cd[@-1em]{
     & [m-1] \ar[dl]_{\delta_p} \ar[dr]^{\delta_p} \\ [m] \ar[dr]_{\delta_p} & & [m] \ar[dl]^{\delta_{p+1}} \\ & [m+1]
    }
\end{equation*}
implies that $h_p.\delta_p = h_{p+1}.\delta_p$; in other words, that $h_p$ and
$h_{p+1}$ are the same, except possibly for their value at $p$. More generally,
we see that for each $p \leqslant j \leqslant q$, the maps $h_j$ and $h_{j+1}$
agree everywhere except possibly at $j$. It follows that a typical $h_j$ must
agree with $h_p$ at all values except for those in $\{p, \dots, j-1\}$, and
must agree with $h_{q+1}$ at all values except for those in $\{j, \dots, q\}$.
Since $h_p$ and $h_{q+1}$ agree at all values except $\{p, \dots, q\}$, this is
certainly possible, and forces the definition
\begin{equation}\label{hdef}
    h_{j}(x) \defeq \begin{cases}
    h_{q+1}(x) & \text{for $x < j$\ ;} \\
    h_p(x) & \text{for $x \geqslant j$\ ,}
    \end{cases} \qquad \text{i.e.,}\quad
    h_{j}(x) \defeq \begin{cases}
    \alpha(x) & \text{for $x < j$\ ;} \\
    \alpha(x)+1 & \text{for $x \geqslant j$\ .}
    \end{cases}
\end{equation}
Now since $g_j.\delta_j = h_j$ and $g_j.\delta_{j+1} = h_{j+1}$, it is easy to
see that this in turn forces the definition
\begin{equation}\label{gdef}
    g_{j}(x) \defeq \begin{cases}
    \alpha(x) & \text{for $x \leqslant j$\ ;} \\
    \alpha(x-1)+1 & \text{for $x > j$\ .}
    \end{cases}
\end{equation}
Thus we have shown that there a unique way of filling in the dotted arrows
in~\eqref{togive}, and hence also a unique way of filling in the dotted arrows
in~\eqref{f1fk}: which completes the construction of the desired map $\hat
\alpha$, and also implies the functoriality of the construction in $\alpha$.
\end{proof}

\subsection{First axiom} We have now completed the construction of the
simplicial set $MX$ of Moore paths in $X$. Recall that we did so by making use
of the isomorphism
\begin{equation}\label{pra}
    (MX)_n \cong \sum_{\theta \in (M1)_n} \cat{SSet}(\hat \theta, X)
\end{equation}
to define the action of a simplicial operator $\alpha \colon [m] \to [n]$ on an
$n$-simplex $(\theta, \phi)$ by $(\theta, \phi) \cdot \alpha = (\theta \cdot
\alpha, \phi \circ \hat \alpha)$. We will exploit the isomorphism~\eqref{pra}
further in proving that:
\begin{Prop}\label{ax1ss}
The simplicial sets $MX$ provide data for Axiom 1 of a path object structure on
$\cat{SSet}$.
\end{Prop}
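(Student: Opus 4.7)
The plan is to exploit the isomorphism~\eqref{pra} to transfer the structure from the combinatorics of traversals in $M1$. First I would define the source and target maps $s_X, t_X \colon MX \to X$ on an $n$-simplex $(\theta, \phi)$ by precomposition of $\phi \colon \hat\theta \to X$ with the colimit injections $s_\theta, t_\theta \colon [n] \to \hat\theta$ from Definition~\ref{simplrealisation}. The identity map $r_X \colon X \to MX$ sends an $n$-simplex $x$ to the pair $(\emptyset_n, x)$, where $\emptyset_n$ denotes the empty $n$-dimensional traversal of length $0$; note that $\widehat{\emptyset_n} = [n]$, so $x$ may be regarded as a map $\widehat{\emptyset_n} \to X$. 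The involution $\tau_X$ is given by $(\theta, \phi) \mapsto (\theta^{\mathrm{op}}, \phi \circ \iota_\theta)$, where $\theta^{\mathrm{op}}(i) = \theta(k+1-i)$ with signs flipped, and $\iota_\theta \colon \widehat{\theta^{\mathrm{op}}} \xrightarrow{\cong} \hat\theta$ is the obvious isomorphism reversing the zigzag~\eqref{dtheta}.

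For composition, given an $n$-traversal $\theta$ of length $k$ and an $n$-traversal $\theta'$ of length $k'$, I would define their concatenation $\theta \star \theta' \colon \{1, \dots, k+k'\} \to [n] \times \{+,-\}$ by $(\theta \star \theta')(i) = \theta(i)$ for $i \leqslant k$ and $\theta'(i-k)$ otherwise. The zigzag diagram $D_{\theta \star \theta'}$ is manifestly the concatenation of $D_\theta$ and $D_{\theta'}$ along a single copy of $[n]$; hence $\widehat{\theta \star \theta'}$ is canonically the pushout of $\hat\theta$ and $\hat{\theta'}$ along $t_\theta$ and $s_{\theta'}$. Composition $m_X \colon MX \times_X MX \to MX$ can then be defined by sending a composable pair $((\theta,\phi), (\theta',\phi'))$ to $(\theta \star \theta', \phi \star \phi')$, where $\phi \star \phi'$ is the map out of the pushout induced by $\phi$ and $\phi'$. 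Functoriality of $M$ in $X$ is transparent, as $Mf$ acts on $(\theta,\phi)$ by postcomposition with $f$; and naturality of $s, t, r, m, \tau$ is immediate since each operation only touches the $\phi$ component by pre- or post-composition.

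For the category axioms, associativity of $m_X$ reduces to associativity of the concatenation of traversals on the $\theta$ side, and to the universal property of iterated pushouts on the $\phi$ side. Left and right unitality reduce to the fact that $\emptyset_n \star \theta = \theta = \theta \star \emptyset_n$ and that the corresponding pushout along the identity $[n] \to \hat\theta$ is just $\hat\theta$. The involution axioms reduce analogously to $(\theta^{\mathrm{op}})^{\mathrm{op}} = \theta$ and $(\theta \star \theta')^{\mathrm{op}} = (\theta')^{\mathrm{op}} \star \theta^{\mathrm{op}}$. Finally, pullback-preservation of $M$ follows from the fact that $\cat{SSet}(\hat\theta, -)$ preserves limits for each $\theta$, combined with the sum decomposition in~\eqref{pra}: a pullback of simplicial sets is computed levelwise, and a map $\hat\theta \to X \times_Z Y$ is exactly a compatible pair of maps $\hat\theta \to X$ and $\hat\theta \to Y$.

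The main obstacle is making rigorous the identification $\widehat{\theta \star \theta'} \cong \hat\theta +_{[n]} \widehat{\theta'}$, and verifying it is suitably coherent with the action of simplicial operators established in Proposition~\ref{realisemaps}; that is, one must check that $\widehat{(\theta \star \theta') \cdot \alpha}$ is compatibly the pushout of $\widehat{\theta \cdot \alpha}$ and $\widehat{\theta' \cdot \alpha}$. Granted this, everything else amounts to routine diagram-chasing and bookkeeping with traversals, for which the uniqueness clauses extracted in the proof of Proposition~\ref{realisemaps}---particularly the explicit formulae~\eqref{hdef} and~\eqref{gdef}---provide ample purchase.
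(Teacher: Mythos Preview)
Your proposal is correct and follows essentially the same route as the paper: the paper likewise defines $s_X,t_X$ via $s_\theta,t_\theta$, $r_X$ via the empty traversal, $m_X$ via concatenation of traversals together with the pushout $\hat\theta_1 +_{[n]} \hat\theta_2 \cong \widehat{\theta_1+\theta_2}$, and $\tau_X$ via the reversed traversal and the obvious isomorphism of realisations; it verifies simpliciality of each of these maps by reference to the diagram~\eqref{f1fk}, and deduces pullback-preservation of $M$ from the sum decomposition~\eqref{pra} exactly as you suggest. The only difference is cosmetic (notation and ordering), and your identification of the ``main obstacle''---compatibility of the pushout decomposition with $\hat\alpha$---is precisely what the paper dispatches by noting that the effect on~\eqref{f1fk} of concatenation is to place such diagrams side by side.
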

\begin{proof}
Given a map $f \colon X \to Y$ of simplicial sets, we define the action of $Mf$
on an $n$-simplex $(\theta, \phi)$ by $Mf(\theta, \phi) = (\theta, f \phi)$. It
is immediate that this yields a map of simplicial sets $MX \to MY$,
functorially in $f$. To see that the functor $M$ so defined preserves
pullbacks, observe that it suffices to do so componentwise; and
that~\eqref{pra} expresses each such component $(M\thg)_n$ as a coproduct of
limit-preserving functors, and so as a pullback- (and indeed, connected limit-)
preserving functor.

We define the maps $s_X, t_X \colon MX \to X$ by sending $(\theta, \phi) \in
(MX)_n$ to the $n$-simplex classified by the composites $\phi \circ s_\theta$
and $\phi \circ t_\theta \colon [n] \to X$ (where $s_\theta$ and $t_\theta$ are
as in Definition~\ref{simplrealisation}). Commutativity in the extremal squares
of~\eqref{f1fk} ensure that $s_X$ and $t_X$ are maps of simplicial sets. To
define $r_X \colon X \to MX$, we note that the realisation of the empty
$n$-dimensional traversal $\epsilon$ is simply $[n]$, so that we may define
$r_X(x) = (\epsilon, \bar x)$, where $\bar x \colon [n] \to X$ is the map
classifying $x$. Compatibility with the simplicial structure again follows
from~\eqref{f1fk}.

To define the morphism $m_X \colon MX \times_X MX \to MX$, observe that an
$n$-simplex of the domain of this map is given by elements $(\theta_1, \phi_1)$
and $(\theta_2, \phi_2) \in (MX)_n$ with $\phi_1 \circ t_{\theta_1} = \phi_2
\circ s_{\theta_2}$. We take their composite to be $(\theta_1 + \theta_2,
\psi)$, where if $\theta_1$ and $\theta_2$ are traversals of length $k_1$ and
$k_2$ respectively, then $\theta_1 + \theta_2$ is the traversal of length $k_1
+ k_2$ given by:
\begin{equation*}
    (\theta_1 + \theta_2)(x) = \begin{cases}
    \theta_1(x) & \text{for $1 \leqslant x \leqslant k_1$ ;}\\
    \theta_2(x-k_1) & \text{for $k_1 < x \leqslant k_1 + k_2$ .}
\end{cases}
\end{equation*}
To give $\psi$, we observe that we have a pushout square:
\begin{equation*}
    \cd[@-1em]{
     & [n] \ar[dl]_{t_{\theta_1}} \ar[dr]^{s_{\theta_2}} \\ \hat{\theta_1} \ar[dr]_{} & & \hat{\theta_2} \ar[dl]^{} \\ & \widehat{\theta_1 + \theta_2}
    }
\end{equation*}
where the two unlabelled arrows are the canonical inclusions, so that we may
take $\psi \colon \widehat{\theta_1 + \theta_2} \to X$ to be the map induced by
the universal property of this pushout applied to the pair $\phi_1 \colon
\hat{\theta_1} \to X$ and $\phi_2 \colon \hat{\theta_2} \to X$. Simpliciality
of $m_X$ follows by observing that its induced effect on diagrams of the
form~\eqref{f1fk} is simply that of by placing them side by side.

It is easy to see that the data given so far equip $MX \rightrightarrows X$
with the structure of an internal category, naturally in $X$; and so it remains
only to provide the identity-on-objects involution $\tau_X \colon MX \to MX$.
To do so, we first define the \emph{reverse} of a traversal $\theta \colon \{1,
\dots, k\} \to [n] \times \{+,-\}$. This will be the traversal $\theta^o$ of
length $k$ given by
\begin{equation*}
    \theta^o(x) = \begin{cases} (y,+) & \text{if $\theta(k+1-x) = (y,-)$;} \\
    (y,-) & \text{if $\theta(k+1-x) = (y,+)$.}
    \end{cases}
\end{equation*}
Now observe that there is a canonical isomorphism $e_\theta \colon
\widehat{\theta^o} \to \hat \theta$, since the diagram $D_{\theta^o}$ of which
the latter is a colimit is obtained by laterally mirroring the diagram
$D_\theta$ for the former. We may therefore define the involution $\tau_X$ by
sending the $n$-simplex $(\theta, \phi)$ to $(\theta^o, \phi \circ e_\theta$).
Since $e_{\theta^o} = e_\theta^{-1}$, this operation is involutive, and is
easily seen to respect the category structure and to be natural in $X$. It
remains to check that $\tau_X$ commutes with the action of simplicial
operators. Observe first that for any simplicial operator $\alpha \colon [m]
\to [n]$, we have $(\theta \cdot \alpha)^o = \theta^o \cdot \alpha$ by virtue
of the condition (\dag) in the definition of the action of $\alpha$. Moreover,
any diagram of the form
\begin{equation*}
    \cd[@C+0.8em]{
      \widehat{\theta \cdot \alpha} \ar[r]^-{e_{\theta \cdot \alpha}} \ar[d]_{\hat \alpha} &
      \widehat{\theta^o \cdot \alpha} \ar[d]^{\hat \alpha} \\
      \hat{\theta} \ar[r]_-{e_{\theta}} &
      \widehat{\theta^o}
    }
\end{equation*}
will commute, since the effect of the operation $(\thg)^o$ on diagrams of the
form~\eqref{f1fk} is to mirror them laterally. It follows from this that
$\tau_X$ commutes with the action by simplicial operators as required.
\end{proof}

\subsection{Second axiom} We have now provided all the data for Axiom 1 of a
path object structure on the category of simplicial sets; and so now turn to
Axiom 2.
\begin{Prop}\label{ax2ss}
The endofunctor $M$ on simplicial sets defined in Proposition~\ref{ax2ss} may
be equipped with a strength which validates Axiom 2 for a path object category.
\end{Prop}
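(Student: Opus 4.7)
The plan is to construct the strength by taking our cue from the topological case, where $\alpha_{X,Y}((r,\phi),y)$ is the path of length $r$ whose value at $s$ is $(\phi(s), y)$: that is, $\phi$ on the $X$-component and constant at $y$ on the $Y$-component. Translating this to the simplicial setting, given an $n$-simplex $((\theta, \phi), y)$ of $MX \times Y$, I should produce an $n$-simplex of $M(X \times Y)$ whose underlying traversal is again $\theta$, whose $X$-component is $\phi$, and whose $Y$-component is the map $\hat\theta \to Y$ ``constant at $y$''. To make sense of the latter, I will construct for each $n$-dimensional traversal $\theta$ a canonical \emph{collapse map} $c_\theta \colon \hat\theta \to [n]$ retracting both $s_\theta$ and $t_\theta$, and then define
\begin{equation*}
    \alpha_{X,Y}((\theta, \phi), y) \defeq \big(\theta,\, \langle \phi,\, \bar y \circ c_\theta \rangle\big)\text,
\end{equation*}
where $\bar y \colon [n] \to Y$ classifies $y$. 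To construct $c_\theta$, I exploit the colimit presentation~\eqref{dtheta}: I form a cocone from $D_\theta$ to $[n]$ whose components on the copies of $[n]$ are identities and whose component on the $i$-th copy of $[n+1]$ is the degeneracy $\sigma_{\theta(i)} \colon [n+1] \to [n]$. This is compatible with the legs of $D_\theta$, since $\delta_{\theta^-(i)}$ and $\delta_{\theta^+(i)}$ are $\delta_{\theta(i)}$ and $\delta_{\theta(i)+1}$ in some order, and the simplicial identities give $\sigma_m \delta_m = \sigma_m \delta_{m+1} = \id_{[n]}$.

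The main technical obstacle is verifying that $\alpha_{X,Y}$ so defined on $n$-simplices actually respects the action of simplicial operators. Unwinding the definition, this reduces to the single naturality identity $\beta \circ c_{\theta \cdot \beta} = c_\theta \circ \hat \beta \colon \widehat{\theta \cdot \beta} \to [n]$ for every $\beta \colon [m] \to [n]$. I will check this componentwise on the presentation of $\widehat{\theta \cdot \beta}$ as a colimit of the analogous diagram $D_{\theta \cdot \beta}$. On every copy of $[m]$ the two composites visibly agree: both $c_{\theta\cdot\beta}$ and $\hat\beta$ restrict to the identity and to $\beta$ respectively, while $c_\theta$ is the identity on the corresponding $[n]$. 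On the $j$-th copy of $[m+1]$, assumed for concreteness to lie in a fibre $\bar\alpha^{-1}(i)$ with $\theta(i) = (a,-)$, the claim becomes the equation $\sigma_a \circ g_j = \beta \circ \sigma_{(\theta \cdot \beta)(j)}$; this follows by a direct case split on whether the argument in $[m+1]$ lies at most $j$ or strictly above $j$, using the explicit formula~\eqref{gdef} together with the elementary rules $\sigma_a(k) = k$ for $k \leqslant a$ and $\sigma_a(k) = k-1$ for $k \geqslant a+1$. The dual case $\theta(i) = (a,+)$ proceeds symmetrically, and the subcase where $a$ does not lie in the image of $\beta$ (so the fibre is empty) is handled analogously using~\eqref{hdef}, exploiting $\sigma_a \delta_a = \sigma_a \delta_{a+1} = \id$.

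With simpliciality and naturality in $X$ and $Y$ in place, the remaining verifications are straightforward. The unit axiom for a tensorial strength follows because when $Y = 1$ the pair $\langle \phi, \bar y \circ c_\theta \rangle$ collapses to $\phi$ under the canonical isomorphism $X \times 1 \cong X$; the pentagon follows by a direct computation using the definition of $c_\theta$. Finally, the strength of each of the natural transformations $s$, $t$, $r$, $m$ and $\tau$ follows from a specific property of the collapse map: the retraction identities $c_\theta \circ s_\theta = c_\theta \circ t_\theta = \id_{[n]}$ give strength for $s$ and $t$; the triviality $c_\epsilon = \id_{[n]}$ gives strength for $r$; the fact that $c_{\theta_1 + \theta_2}$ is induced via the pushout defining $\widehat{\theta_1 + \theta_2}$ by $c_{\theta_1}$ and $c_{\theta_2}$ yields strength for $m$; and the evident identity $c_{\theta^o} = c_\theta \circ e_\theta$, coming from the lateral symmetry of $D_\theta$ and $D_{\theta^o}$, yields strength for $\tau$. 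The pullback-compatibility~\eqref{pbcomponents} is immediate since $\alpha_{X,Y}$ is defined without altering the traversal coordinate.
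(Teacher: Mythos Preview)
Your proof is correct and follows essentially the same route as the paper: your collapse map $c_\theta$ is exactly the paper's $j_\theta$, constructed from the same cocone, and the key simpliciality check reduces to the same identity $\sigma_a \circ g_j = \beta \circ \sigma_j$ derived from~\eqref{gdef}. The only cosmetic difference is that the paper appeals to the reduction in Section~\ref{strengthsection} and defines only the components $\alpha_{1,X}$, whereas you write down the full $\alpha_{X,Y}$ directly; the verifications of strength for $s$, $t$, $r$, $m$, $\tau$ are otherwise identical.
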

\begin{proof}
By the remarks made in Section~\ref{strengthsection}, it suffices to provide
components $\alpha_{1,X} \colon M1 \times X \to MX$ for the strength. To do
this, we first define, for every $n$-dimensional traversal $\theta$, a map
$j_\theta \colon \hat \theta \to [n]$ induced by the cocone
\begin{equation}\label{coconediag}
\cd[@R-1em]{
    [n] \ar[dr]^{\delta_{\theta^-(1)}} \ar@/_42pt/[dddrrr]_{\id} & &
    [n] \ar[dl]_{\delta_{\theta^+(1)}} \ar[dr]^{\delta_{\theta^-(2)}} \ar@/_18pt/[dddr]_{\id} & &
    \dots \ar[dl]_{\delta_{\theta^+(2)}} \ar[dr]^{\delta_{\theta^-(k)}}  & &
    [n] \ar[dl]_{\delta_{\theta^+(k)}} \ar@/^42pt/[dddlll]^{\id} \\
    & [n+1] \ar@/_18pt/[ddrr]|{\sigma_{\theta(1)}} & &
    [n+1] \ar[dd]|{\sigma_{\theta(2)}} & &
    [n+1] \ar@/^18pt/[ddll]|{\sigma_{\theta(k)}}\\ \\
    & & & [n]
}
\end{equation}
under the diagram $D_\theta$ of which $\hat \theta$ is a colimit. Now given an
$n$-simplex $(\theta, x)$ of $M1 \times X$, we define its image under
$\alpha_{1,X}$ to be $(\theta, \bar x \circ j_\theta)$, where as before, $\bar
x \colon [n] \to X$ is the map classifying $x$. To show that this yields a map
of simplicial sets, it is evidently enough to verify that diagrams of the form
\begin{equation*}
    \cd{
      \widehat{\theta \cdot \alpha} \ar[r]^-{j_{\theta \cdot \alpha}} \ar[d]_{\hat \alpha} &
      [m] \ar[d]^{\alpha} \\
      \hat{\theta} \ar[r]_-{j_{\theta}} &
      [n]
    }
\end{equation*}
commute. Now since $\hat \alpha$ is defined by the diagram~\eqref{f1fk}, it
suffices for this to show that, using the notation of that diagram, the square
\begin{equation*}
\cd[@R-0.7em]{
   \hat{\psi_i} \ar[r]^{j_{\psi_i}} \ar[d]_{f_i} & [m] \ar[d]^{\alpha} \\
   [n+1] \ar[r]_-{\sigma_{\theta(i)}} & [n]
}
\end{equation*}
commutes for each $i$. Recalling that the morphism $f_i$ was defined by the
diagram~\eqref{togive}---in which we have assumed, without loss of generality,
that $\theta(i) = (a,-)$---it therefore suffices to show, using the notation of
that diagram, that $\sigma_a.g_j = \alpha.\sigma_j$ for each $p \leqslant j
\leqslant q$: and this follows by direct examination of the
equation~\eqref{gdef} defining $g_j$. This completes the verification that
$\alpha_{1,X}$ is a map of simplicial sets; and it is straightforward to verify
that these maps are natural in $X$, and that the strength axioms are satisfied.
Now that $r$ is strong follows by observing that $j_\epsilon = \id_{[n]}$
(where as before $\epsilon$ is the empty traversal of dimension $n$); that $s$
and $t$ are strong follows from the commutativity of the extremal triangles
in~\eqref{coconediag}; that $m$ is strong follows by noting that for
$n$-dimensional traversals $\theta_1$ and $\theta_2$, the following diagram
commutes:
\begin{equation*}
    \cd[@+0.5em]{
        \hat{\theta_1} \ar[r] \ar[dr]_{j_{\theta_1}}& \widehat{\theta_1 + \theta_2} \ar[d]|{j_{\theta_1 + \theta_2}} & \hat{\theta_2} \ar[l] \ar[dl]^{j_{\theta_2}}\\
        & [n]
    }
\end{equation*}
in which the horizontal arrows are the canonical inclusions; whilst that $\tau$
is strong follows from the fact that for any traversal $\theta$, the diagram
\begin{equation*}
    \cd[@+0.5em]{
        \hat{\theta} \ar[r]^{e_\theta} \ar[dr]_{j_{\theta}}& \widehat{\theta^o} \ar[d]^{j_{\theta^o}}\\
        & [n]
    }
\end{equation*}
commutes.
\end{proof}
\subsection{Third axiom} The last part of the proof that simplicial sets form a
path object category will be to construct the maps $\eta_X \colon MX \to MMX$
which, to every $n$-dimensional Moore path in $X$, associate an $n$-dimensional
Moore path in $MX$ which contracts the given path on to its endpoint.
We motivate the construction by considering the example~\eqref{contractsample}
of Section~\ref{ssetssection} above, which demonstrates the action of $\eta_X$
on the $0$-dimensional Moore path of~\eqref{typ1}.
The first observation we can make about the Moore path
of~\eqref{contractsample} is that it has the same underlying traversal as the
path~\eqref{typ1} of which it is a contraction. So for a general
$n$-dimensional Moore path $x = (\theta, \phi)$ of length $k$ in $X$, we aim to
give $\eta_X(x)$ of the form $(\theta, \eta_X(\phi))$ in $MX$. To do this, we
must provide $(k+1)$ $n$-simplices and $k$ $(n+1)$-simplices of $MX$ which are
matched together in the fashion dictated by $\theta$. To give the $n$-simplices
is straightforward; as suggested by our example~\eqref{contractsample}, these
will be obtained by suitably truncating the original path.
\begin{Defn}
Let $x = (\theta, \phi)$ be an $n$-dimensional Moore path of length $k$ in $X$.
For $0 \leqslant i \leqslant k$, we define the \emph{tail at $i$} to be the
$n$-dimensional Moore path $x^i = (\theta^i, \phi^i)$ of length $k-i$ whose
traversal is the function $x \mapsto \theta(x+i)$, and whose second component
is the composite
\begin{equation*}
  \hat{\theta^i} \to \hat \theta \xrightarrow{\phi} X
\end{equation*}
in which the first arrow is the canonical inclusion.
\end{Defn}
It remains to define the $(n+1)$-simplices of $MX$ that will mediate between
the tail at $i$ and the tail at $i+1$; and consideration of the
example~\eqref{contractsample} suggests that these should be obtained by first
forming a suitable degeneracy of the tail at $i$, and then removing its first
element. For example, looking at the case $i=1$ in~\eqref{contractsample}, we
see that we have:
\begin{equation*} \cd[@+1em]{
  & z_1 \ar[r]|{f_1} \ar@{.>}[dl]_{z_2 \cdot \sigma_0} \ar[d]_{f_1} &
  z_2 \ar[dl]|{z_2 \cdot \sigma_0} &
  z_3 \ar[l]|{f_2} \ar[r]|{f_3} \ar[dll]|{f_2} \ar[dl]|{z_3 \cdot \sigma_0} \ar[d]|{f_3} &
  z_4 \ar[dl]|{z_4\cdot \sigma_0} &
  x' \ar[l]|{f_4} \ar[dll]|{f_4} \ar[dl]^{x'\cdot \sigma_0} \\
  z_1 \ar@{.>}[r]|{f_1} &
  z_2 &
  z_3 \ar[l]|{f_2} \ar[r]|{f_3} &
  z_4 &
  x'\rlap{ ,} \ar[l]|{f_4}
}
\end{equation*}
which has been obtained by removing the first element from the degeneracy
$\sigma_0$ of the tail at $1$. This motivates the following definition:
\begin{Defn}\label{etadefnsimpl}
Let $x = (\theta, \phi)$ be an $n$-dimensional Moore path of length $k$ in $X$.
We define $\eta_X(x)$ to be the $n$-dimensional Moore path in $MX$ whose
traversal is $\theta$, and whose second component $\eta_X(\phi) \colon \hat
\theta \to MX$ is the map induced by the cocone
\begin{equation}\label{etatheta}
\cd[@R-1em]{
    [n] \ar[dr]^{\delta_{\theta^-(1)}} \ar@/_42pt/[dddrrr]_{x^0} & &
    [n] \ar[dl]_{\delta_{\theta^+(1)}} \ar[dr]^{\delta_{\theta^-(2)}} \ar@/_18pt/[dddr]_{x^1} & &
    \dots \ar[dl]_{\delta_{\theta^+(2)}} \ar[dr]^{\delta_{\theta^-(k)}}  & &
    [n]\rlap{ .} \ar[dl]_{\delta_{\theta^+(k)}} \ar@/^42pt/[dddlll]^{x^k} \\
    & [n+1] \ar@/_18pt/[ddrr]|{(x^0 \cdot \sigma_{\theta(1)})^1} & &
    [n+1] \ar[dd]|{(x^1 \cdot \sigma_{\theta(2)})^1} & &
    [n+1] \ar@/^18pt/[ddll]|{(x^{k-1} \cdot \sigma_{\theta(k)})^1}\\ \\
    & & & MX
}
\end{equation}
%
%
%
%
\end{Defn}
In order for this definition to make sense, we must verify that the
diagram~\eqref{etatheta} appearing in it is commutative. We do so using the
following result.
\begin{Lemma}\label{shift}
Let $x = (\theta, \phi)$ be an $n$-dimensional Moore path of length $k$, and
let $\alpha \colon [m] \to [n]$ be a simplicial operator. Then for any $0
\leqslant i \leqslant k$, we have $x^i \cdot \alpha = (x \cdot \alpha)^{\bar
\imath}$, where $\bar \imath = \sum_{j=1}^i \abs{\alpha^{-1}(\theta(j))}$.
\end{Lemma}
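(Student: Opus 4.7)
Plan: The proof is essentially a careful unpacking of definitions, carried out in two stages corresponding to the two components of a Moore path. It reduces entirely to combinatorial bookkeeping about the pullback square~\eqref{pbsquare} defining $\theta \cdot \alpha$, together with the local character of the construction of $\hat\alpha$ through the maps $f_j$ in~\eqref{f1fk}.

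First, I would establish the traversal identity $\theta^i \cdot \alpha = (\theta \cdot \alpha)^{\bar\imath}$. Write $\psi = \theta\cdot\alpha$ of length $\ell$, with associated order-preserving map $\bar\alpha \colon \{1,\dots,\ell\} \to \{1,\dots,k\}$ whose fibre over $j$ has cardinality $\abs{\alpha^{-1}(\theta(j))}$. The definition of $\bar\imath$ then says exactly that $\bar\alpha^{-1}(\{1,\dots,i\}) = \{1,\dots,\bar\imath\}$, so $\bar\alpha$ restricts to an order-preserving map $\{\bar\imath+1,\dots,\ell\} \to \{i+1,\dots,k\}$ whose fibre over $i+j$ has cardinality $\abs{\alpha^{-1}(\theta^i(j))}$. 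Restricting the pullback square~\eqref{pbsquare} to these subsets then exhibits $(\theta\cdot\alpha)^{\bar\imath}$ (after renumbering) as the unique traversal fitting in the pullback square for $\theta^i\cdot\alpha$ satisfying condition~(\dag), and the two therefore agree.

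Second, I would verify the identity of the second components. Since the tail inclusions and $\hat\alpha$ are all maps of simplicial sets arising from universal properties, this amounts to showing commutativity of the square
\[
    \cd[@+0.5em]{
      \widehat{\theta^i \cdot \alpha} \ar[r]^-{\iota_{\bar\imath}} \ar[d]_{\hat\alpha^{(i)}} &
      \widehat{\theta \cdot \alpha} \ar[d]^{\hat\alpha} \\
      \widehat{\theta^i} \ar[r]_-{\iota_i} &
      \hat{\theta}\rlap{ ,}
    }
\]
where $\iota_i, \iota_{\bar\imath}$ are the evident tail inclusions of colimit diagrams, and $\hat\alpha^{(i)}$ is the realisation of $\alpha$ on the shifted traversal $\theta^i$. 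By the universal property of the colimit $\widehat{\theta^i\cdot\alpha}$, it suffices to verify the equality after precomposition with each of the coprojections out of $D_{\theta^i\cdot\alpha}$. Since $\hat\alpha$ is defined in~\eqref{f1fk} by gluing maps $f_j \colon \widehat{\psi_j} \to [n+1]$ whose construction depends only on the value of $\theta(j)$ and on the segment $\alpha^{-1}(\theta(j))$, the map $f_j$ built in defining $\hat\alpha^{(i)}$ agrees (after index shift) with the map $f_{i+j}$ built in defining $\hat\alpha$; and likewise the extremal coprojections match up on the nose.

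The main obstacle is purely notational: keeping the two layers of shifts---shifting traversals and shifting their simplicial realisations---properly aligned via the map $\bar\alpha$. Once the explicit formula $\bar\alpha^{-1}(\{1,\dots,i\}) = \{1,\dots,\bar\imath\}$ is in hand, both stages are entirely formal, with the second stage being an instance of the general principle that the colimit construction $\theta \mapsto \hat\theta$ is "local in $\theta$''.
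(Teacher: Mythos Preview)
Your proof is correct but takes a genuinely different route from the paper's. You argue componentwise: first establishing the traversal identity by restricting the pullback square~\eqref{pbsquare}, then verifying the second component by checking that the realisation maps $\hat\alpha$ commute with the tail inclusions, appealing to the local nature of the $f_j$'s in~\eqref{f1fk}. The paper instead exploits structure already established: it introduces the \emph{head at $i$}, $x_i \defeq ((x^o)^{k-i})^o$, observes that $x = m_X(x^i, x_i)$ and that this decomposition is unique given the length of the head, and then uses the simpliciality of $m_X$ (proved in Proposition~\ref{ax1ss}) to compute $x \cdot \alpha = m_X(x^i \cdot \alpha,\, x_i \cdot \alpha)$, whence $x^i \cdot \alpha = (x \cdot \alpha)^{\bar\imath}$ with $\bar\imath$ the length of $x_i \cdot \alpha$. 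The paper's argument is shorter and more conceptual, since it never touches the realisation maps directly; your argument is more self-contained, as it does not rely on the simpliciality of $m_X$ and makes explicit the locality principle underlying the construction of $\hat\alpha$.
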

\begin{proof}
Observe that as well as $x^i$, the tail at $i$, we may also by duality form
$x_i$, the \emph{head at~$i$}: we take $x_i \defeq ((x^o)^{k-i})^o$. It is now
easy to see that $x = m_X(x^i,x_i)$; and moreover, that if $x = m_X(z,y)$ with
$y$ of length $i$, then necessarily $y = x_i$ and $z = x^i$. Now we have that $
x \cdot \alpha = m_X(x^i,x_i) \cdot \alpha = m_X(x^i \cdot \alpha, x_i \cdot
\alpha)$ and hence that $x^i \cdot \alpha = (x \cdot \alpha)^{\bar \imath}$,
where $\bar \imath$ is the length of $x_i \cdot \alpha$. But examination of the
pullback square~\eqref{pbsquare} shows that this length is $\sum_{j=1}^i
\abs{\alpha^{-1}(\theta(j))}$ as required.
\end{proof}
We now show commutativity in~\eqref{etatheta}. For $1 \leqslant i \leqslant k$,
we have by condition (\dag) on page~\pageref{condition} that the first element
of the traversal associated with $x^{i-1} \cdot \sigma_{\theta(i)}$ is of the
form $(\theta^-(i), \rho)$: and so by the preceding Lemma, we obtain
\begin{equation}\label{obtained}
\begin{aligned}    (x^{i-1} \cdot \sigma_{\theta(i)})^1 \cdot \delta_{\theta^-(i)} & = (x^{i-1} \cdot
\sigma_{\theta(i)} \cdot \delta_{\theta^-(i)})^0 = x^{i-1}\\
\text{and} \ \
    (x^{i-1} \cdot \sigma_{\theta(i)})^1 \cdot \delta_{\theta^+(i)} & = (x^{i-1} \cdot
\sigma_{\theta(i)} \cdot \delta_{\theta^+(i)})^1 = (x^{i-1})^1 = x^i\rlap{ .}
\end{aligned}
\end{equation}
\begin{Prop}
The assignation $x \mapsto \eta_X(x)$ of Definition~\ref{etadefnsimpl} provides
data for an instance of Axiom 3 for a path object structure on the category of
simplicial sets.
\end{Prop}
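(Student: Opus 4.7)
The plan is to establish four properties of the assignation $x \mapsto \eta_X(x)$: well-definedness, simpliciality combined with naturality in $X$, strength, and validity of the five equations~\eqref{seta}--\eqref{tri2}. Well-definedness amounts to verifying that~\eqref{etatheta} is indeed a cocone under $D_\theta$, and this is precisely the content of the two face identities~\eqref{obtained}, which follow from Lemma~\ref{shift} and the condition~(\dag) characterising the first element of the traversal of $x^{i-1} \cdot \sigma_{\theta(i)}$.

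For the four equations~\eqref{seta}--\eqref{teta} and~\eqref{Ms}--\eqref{Mt}, the verification reduces to comparing morphisms out of the colimit $\hat\theta$, so it suffices to test them on each vertex and edge of~\eqref{dtheta}. For~\eqref{seta}, we note that $s_{MX}\circ\eta_X(x) = \eta_X(\phi)\circ s_\theta$, and evaluating the cocone~\eqref{etatheta} at its leftmost $[n]$ yields $x^0 = x$. For~\eqref{teta}, the rightmost $[n]$ yields $x^k$, whose traversal is empty; hence $x^k = r_X(t_X(x))$ as required. For~\eqref{Ms}, we must show $s_X\circ\eta_X(\phi) = \phi$ as maps $\hat\theta\to X$; the top-corner injections evaluate to $s_X(x^i) = \phi\circ (i\text{th corner})$, while on the bottom $(n+1)$-simplices we apply $s_X$ to $(x^{i-1}\cdot\sigma_{\theta(i)})^1$ and use~\eqref{obtained} to identify the result with the restriction of $\phi$ to the corresponding bottom $[n+1]$. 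For~\eqref{Mt}, an analogous argument shows that $Mt_X\circ\eta_X(\phi)$ factors through the cocone defining $\alpha_{1,X}(M!\phi, t_X\phi)$, since $t_X(x^i)$ is independent of $i$ and the bottom simplices become suitable degeneracies via~\eqref{gdef}. Finally, equation~\eqref{tri2} holds because when $x = r_X(y) = (\epsilon, \bar y)$ the traversal is empty and the cocone collapses to the single simplex $r_X(y)$, so $\eta_X(r_X(y)) = (\epsilon, \overline{r_X(y)}) = r_{MX}(r_X(y))$.

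Naturality in $X$ is transparent: applying $Mf \colon MX \to MY$ postcomposes the cocone~\eqref{etatheta}, and since $(Mf(x))^i = Mf(x^i)$ and $Mf$ commutes with the action of $\sigma_{\theta(i)}$, the resulting map is $\eta_Y(Mf(x))$. Strength of $\eta$ reduces by~\eqref{pbcomponents} to checking the component $\eta_1 \colon M1 \to MM1$ together with compatibility of $\eta_X$ with $\alpha_{1,X}$, which follows from the fact that both $x \mapsto x^i$ and $x \mapsto x^{i-1}\cdot\sigma_{\theta(i)}$ are simplicial-operator constructions on the second component and so commute with constant labelling.

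The principal obstacle is simpliciality: that $\eta_X(x \cdot \alpha) = \eta_X(x) \cdot \alpha$ for every simplicial operator $\alpha \colon [m] \to [n]$. Both sides are $m$-simplices of $MMX$ with traversal $\theta \cdot \alpha$, but their second components are induced out of the colimit $\widehat{\theta\cdot\alpha}$, which is related to $\hat\theta$ by the map $\hat\alpha$ of~\eqref{f1fk}. To compare them, one traces through~\eqref{f1fk} each top-corner injection and each bottom $(n+1)$-simplex, matching the tail $(x\cdot\alpha)^{\bar\imath}$ (with $\bar\imath$ as in Lemma~\ref{shift}) against the restriction of $x^i \cdot \alpha$, and matching $((x\cdot\alpha)^{\bar\imath - 1}\cdot \sigma_{\psi_i})^1$ against the restriction of $(x^{i-1}\cdot\sigma_{\theta(i)})^1\cdot f_i$ using the explicit formulas~\eqref{hdef} and~\eqref{gdef} for $f_i$ together with the simplicial identities. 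This combinatorial matching, though lengthy, is forced at each step by the universal properties of the colimits involved, and so admits no genuine choice.
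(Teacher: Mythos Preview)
Your overall strategy matches the paper's: establish the cocone condition via~\eqref{obtained}, verify the five equations componentwise on the colimit $\hat\theta$, dispose of naturality and strength as routine, and reduce simpliciality to a combinatorial match along the maps $f_i$ of~\eqref{f1fk}. That is the right decomposition.

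There is, however, a genuine gap in your verification of~\eqref{Ms}. You write that on the bottom $(n+1)$-simplices one applies $s_X$ to $(x^{i-1}\cdot\sigma_{\theta(i)})^1$ and uses~\eqref{obtained} to identify the result with $\phi\circ q_i$. But~\eqref{obtained} only records the effect of the face operators $\delta_{\theta^\pm(i)}$ on this simplex; it says nothing about its source under $s_X$. What is actually needed is to identify $s_X\big((x^{i-1}\cdot\sigma_{\theta(i)})^1\big)$, i.e.\ the \emph{second} $[n+1]$-injection into $\widehat{\theta^{i-1}\cdot\sigma_{\theta(i)}}$, composed with $\widehat{\sigma_{\theta(i)}}$ and the inclusion $\widehat{\theta^{i-1}}\hookrightarrow\hat\theta$, and to show this equals $q_i$. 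The paper does this by inspecting the left edge of the diagram~\eqref{f1fk} for $\sigma_a$ and observing from~\eqref{hdef} that the relevant map $h_{a+1}\colon[n+1]\to[n+1]$ is the identity. Without that step the equality $s_X\circ\eta_X(\phi)=\phi$ is not established.

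Your simpliciality paragraph is pointed in the right direction but is too optimistic in claiming the matching is ``forced at each step by the universal properties''. The paper's argument splits into the degenerate case where $\psi_i$ is empty (when $\theta(i)$ is outside the image of $\alpha$) and the generic case; in the latter, the crucial step is the index computation showing that $|g_j^{-1}(a)| = j-p+1$ from~\eqref{gdef} and, separately, that $\bar\imath = \sum_{h=1}^{j-p}|\sigma_j^{-1}(\psi(h+r))| = j-p$. These equalities are what make the two sides of~\eqref{tocheckhard} coincide, and they are not consequences of universal properties alone but of the specific arithmetic of the traversal indices. You should make this computation explicit.
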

\begin{proof}
The hardest part of the proof will be to verify that $\eta_X$ is a map of
simplicial sets. Given $x \in (MX)_n$ and a simplicial operator $\alpha \colon
[m] \to [n]$, we must show that $\eta_X(x \cdot \alpha) = \eta_X(x) \cdot
\alpha$. Suppose that $x = (\theta, \phi)$ is a traversal of length $k$, and
write $\psi = \theta \cdot \alpha$. Then to verify that $\eta_X(x \cdot \alpha)
= \eta_X(x) \cdot \alpha$ it suffices to show that for each $1 \leqslant i
\leqslant k$, the square
\begin{equation}\label{idiag}
    \cd[@R-0.5em@C+3em]{
        \hat \psi_i \ar[d]_{f_i} \ar[r] & \hat \psi \ar[d]^{\eta_X(x \cdot \alpha)} \\
        [n+1] \ar[r]_-{(x^{i-1} \cdot \sigma_{\theta(i)})^1} & MX
    }
\end{equation}
commutes, where $f_i$ and $\psi_i$ are as defined in~\eqref{f1fk}, and where
the unlabelled horizontal map is the evident inclusion. So let us fix some $1
\leqslant i \leqslant k$, and suppose without loss of generality that
$\theta(i) = (a,\mathord -)$. Let us also define
$    r = \sum_{j=1}^{i-1} \left|\alpha^{-1}(\theta(j))\right|$,
and observe that with this definition, the canonical inclusion $\hat \psi_i \to
\hat \psi$ maps the $j$th $m$- or $(m+1)$-simplex of $\hat \psi_i$ to the
$(j+r)$th $m$- or $(m+1)$-simplex of $\hat \psi$. To prove that~\eqref{idiag}
commutes, we first consider the degenerate case where $\psi_i$ is an empty
traversal: that is, when $a$ is not in the image of $\alpha$. Then by the proof
of Proposition~\ref{realisemaps}, we have $\hat \psi_i = [m]$ and $f_i =
\delta_a \alpha$, so that we must verify that the diagram
\begin{equation*}
    \cd[@R-0.5em@C+2em]{
        [m] \ar[d]_{\delta_a \alpha} \ar[dr]^{(x \cdot \alpha)^r} \\
        [n+1] \ar[r]_-{(x^{i-1} \cdot \sigma_{a})^1} & MX
    }
\end{equation*}
commutes. But by~\eqref{obtained} and Lemma~\ref{shift} we have $(x^{i-1} \cdot
\sigma_a)^1 \cdot \delta_a \cdot \alpha = x^{i-1} \cdot \alpha = (x \cdot
\alpha)^r$ as required. Suppose now that $\psi_i$ is a non-empty traversal,
given as in the proof of Proposition~\ref{realisemaps} by the function
$\psi_i(x) = (p+x-1, \mathord -)$.
 Now to
show commutativity in~\eqref{idiag}, it suffices to do so on precomposition
with the $q-p+1$ colimit injections $[m+1] \to \hat \psi_i$. If we label these
injections with natural numbers $p \leqslant j \leqslant q$, then
precomposing~\eqref{idiag} with the $j$th one yields the diagram
\begin{equation}\label{tocheckhard}
    \cd[@R-0.5em@C+1.5em]{
        [m+1] \ar[d]_{g_{j}} \ar[dr]^{\ \ \ \ ((x\cdot \alpha)^{j+r-p} \cdot \sigma_{j})^1} \\
        [n+1] \ar[r]_-{(x^{i-1} \cdot \sigma_a)^1} & MX\ \text.
    }
\end{equation}
Now the first element of the traversal associated with $(x^{i-1} \cdot
\sigma_a)$ is $(a, -)$; and hence by Lemma~\ref{shift}, we have $(x^{i-1} \cdot
\sigma_a)^1 \cdot g_{j} = (x^{i-1} \cdot \sigma_a \cdot g_j)^{|g_j^{-1}(a)|}$.
But by direct examination of~\eqref{gdef}, we see that $|g_j^{-1}(a)| = j - p +
1$; moreover, we have as above that $x^{i-1} \cdot \sigma_a \cdot g_j = x^{i-1}
\cdot \alpha \cdot \sigma_j = (x \cdot \alpha)^r \cdot \sigma_j$, and so
conclude that the map along the lower side of~\eqref{tocheckhard} is equal to
$(x^{i-1} \cdot \sigma_a)^1 \cdot g_{j} =  ((x \cdot \alpha)^r \cdot
\sigma_j)^{j - p + 1} $. Now, along the upper side we have $((x\cdot
\alpha)^{j+r-p} \cdot \sigma_{j})^1 = (((x\cdot \alpha)^r)^{j-p} \cdot
\sigma_{j})^1 = ((x\cdot \alpha)^r \cdot \sigma_j)^{\bar \imath + 1}$, where
here $\bar \imath = \sum_{h = 1}^{j-p} | \sigma_j^{-1}(\psi(h+r))|$. To prove
equality in~\eqref{tocheckhard}, it therefore suffices to show that $\bar
\imath = j-p$. But we have that $\psi(h+r) = \psi_i(h) = p + h -1$ so that
$\bar \imath = \sum_{h = 1}^{j-p} | \sigma_j^{-1}(p + h - 1)| = j-p$ as
required. This proves that~\eqref{tocheckhard}, and hence~\eqref{idiag}, are
commutative, which completes the verification that $\eta_X$ is a map of
simplicial sets.  It is now straightforward to verify that the $\eta_X$'s so
defined are natural in $X$ and strong; and so it remains only to check the
equations~\eqref{seta}--\eqref{tri2}.

For equations~\eqref{seta} and~\eqref{teta}, it is immediate
from~\eqref{etatheta} that $s_{MX}(\eta_X(x)) = x^0 = x$ and that
$t_{MX}(\eta_X(x)) = x^k = r_X(t_X(x))$ as required, whilst for
equation~\eqref{tri2}, we calculate that $\eta_X(r_X(x)) = \eta_X(\epsilon, x)
= (\epsilon, (\epsilon, x)) = r_{MX}(r_X(x))$ as required. Thus it remains to
verify equations~\eqref{Ms} and~\eqref{Mt}, which, we recall, say that
$
    Ms_{X} .\eta_X = \id_{MX}$ and $
    Mt_{X} .\eta_X = \alpha_{1,X} . (M!, t_X)$.
Now, given an $n$-simplex $x = (\theta, \phi)$ of $X$ we have that
$Ms_x(\eta_X(x))$ and $Mt_X(\eta_X(x))$ are given by the simplices $(\theta,
s_X.\eta_X(\phi))$ and $(\theta, t_X.\eta_X(\phi))$ respectively, and that
$\alpha_{1,X} . (M!, t_X)$ is given by the simplex $(\theta, t_X(x).j_\theta)$,
where $j_\theta$ is as defined in Proposition~\ref{ax2ss}. Thus to verify
equations~\eqref{Ms} and~\eqref{Mt}, we must prove that
\begin{equation*}
    s_X.\eta_X(\phi) = \phi \qquad \text{and} \qquad
    t_X.\eta_X(\phi) = t_X(x).j_\theta
\end{equation*}
as maps $\hat \theta \to X$. But since $\hat \theta$ is the colimit of the
diagram~\eqref{dtheta}, it suffices to show these two equalities on
precomposition with the each of the colimit injections $q_1, \dots, q_k \colon
[n+1] \to \hat \theta$. The latter case is simpler: we calculate that
\begin{align*}
t_X.\eta_X(\phi).q_i &=
t_X.((x^{i-1} \cdot \sigma_{\theta(i)})^1)
= t_X.(x^{i-1} \cdot
\sigma_{\theta(i)}) \\
&= t_X(x^{i-1}) \cdot \sigma_{\theta(i)} = t_X(x) .
\sigma_{\theta(i)} = t_X(x).j_\theta.q_i
\end{align*}
as required. For the former case, we must show that $s_X((x^{i-1} \cdot
\sigma_{\theta(i)})^1) = \phi.q_i$ as maps $[n+1] \to X$. For definiteness, let
us suppose that $\theta(i) = (a, \mathord -)$; the case where it is $(a,
\mathord +)$ is entirely dual. Now $s_X((x^{i-1} \cdot \sigma_{\theta(i)})^1)$
is the composite
\begin{equation}\label{scomp}
    [n+1] \xrightarrow{u} \widehat{\theta^{i-1} \cdot
\sigma_{a}} \xrightarrow{\widehat{\sigma_{a}}} \widehat{\theta^{i-1}} \to \widehat{\theta} \xrightarrow{\phi} X
\end{equation}
in which the first arrow $u$ picks out the second copy of $[n+1]$ in the
colimit defining its codomain. In order to simplify this composite further,
consider the morphism $\widehat{\sigma_{a}}$ appearing in it. This is induced
by a diagram of the form~\eqref{f1fk}, whose left-hand edge is given by
\begin{equation*}
  \cd{
    [n+1] \ar[d]_{\sigma_{a}} \ar[dr]^{\delta_a} & &
    [n+1] \ar[dl]_{\delta_{a+1}} \ar[dr]^{\delta_{a+1}} \ar@{.>}[dd]|{h_{a+1}} & &
    [n+1] \ar[d]^{\sigma_{a}} \ar[dl]_{\delta_{a+2}} \\
    [n] \ar@/_9pt/[drr]_{\delta_{a}} &
    [n+2] \ar@{.>}[dr]|{g_a} &
    &
    [n+2] \ar@{.>}[dl]|{g_{a+1}}&
    [n] \ar@/^9pt/[dll]^{\delta_{a+1}} & \dots\\
    & & [n+1]}
\end{equation*}
where the dotted arrows are defined as in~\eqref{togive}. In particular, by
inspection of~\eqref{hdef}, we see that $h_{a+1}$ is the identity. Hence the
composite $\widehat{\sigma_a} u$ is the morphism picking out the leftmost
$[n+1]$-simplex in the colimit defining $\widehat \theta^{i-1}$, and
hence~\eqref{scomp} is equal to $\phi.q_i$ as required.
\end{proof}
This completes the verification of Axiom 3, and hence of:
\begin{Prop}
The category of simplicial sets bears the structure of a path object category.
\end{Prop}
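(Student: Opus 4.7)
The plan is to observe that, by definition, a path object category consists of a finitely complete category together with data satisfying Axioms~1, 2, and~3. The category $\cat{SSet} = [\Delta^\op, \cat{Set}]$ is a presheaf category and hence complete---in particular finitely complete---so nothing need be checked on that front. It then suffices to cite the three preceding propositions of this section, each of which has supplied the required data and verified one of the axioms.

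More precisely, Proposition~\ref{ax1ss} constructs, for each simplicial set $X$, the simplicial set $MX$ of Moore paths in $X$ together with the structural maps $s_X, t_X, r_X, m_X$ and the involution $\tau_X$, and shows that these equip $MX \rightrightarrows X$ with the structure of an internal category with involution, naturally in $X$, while also verifying that the endofunctor $M$ preserves pullbacks; this is exactly the content of Axiom~1. Proposition~\ref{ax2ss} exhibits a strength $\alpha_{X,Y}$ on $M$ (built from the contracting maps $j_\theta \colon \hat \theta \to [n]$) and checks that $s$, $t$, $r$, $m$ and $\tau$ are strong natural transformations; this is Axiom~2. Finally, Definition~\ref{etadefnsimpl} gives the natural transformation $\eta_X \colon MX \to MMX$ and the proposition following it verifies both simpliciality, strength, and equations~\eqref{seta}--\eqref{tri2}; this is Axiom~3.

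Thus the present proposition reduces to the one-line assembly of these three verifications together with the finite completeness of $\cat{SSet}$. The substantive combinatorial work has of course already happened inside those three propositions. The principal obstacle was concentrated in the proof of the $\eta$ proposition, where showing that $\eta_X$ commutes with the action of an arbitrary simplicial operator $\alpha \colon [m] \to [n]$ required a careful case analysis of the induced map $\hat \alpha$ between simplicial realisations of traversals, matched against the tail-truncation identity of Lemma~\ref{shift} and the explicit formulas~\eqref{hdef}--\eqref{gdef} for the arrows $h_j$ and $g_j$ determined by the pullback square~\eqref{pbsquare}. With that in hand, the present proposition is immediate.
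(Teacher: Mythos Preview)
Your proposal is correct and matches the paper's approach exactly: the paper treats this proposition as the immediate consequence of having verified Axioms~1, 2, and~3 in the three preceding propositions, writing simply ``This completes the verification of Axiom 3, and hence of'' the result. Your additional observation that $\cat{SSet}$ is finitely complete (being a presheaf category) fills in the one background hypothesis the paper leaves implicit.
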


\bibliographystyle{acm}
\bibliography{rhgg2}

 \end{document}